\documentclass{amsart}
\usepackage{amssymb,amsmath,amsthm,epsf,epsfig,dsfont,bbm}
\usepackage{latexsym}
\usepackage[utf8]{inputenc}
\usepackage[backref=page]{hyperref}
\hypersetup{
	colorlinks   = true,
	citecolor    = blue,
	linkcolor    = red
	}
\usepackage{upref, eucal}
\usepackage[all]{xy}

\newcommand {\nc} {\newcommand}
\newcommand {\enm} {\ensuremath}

\def \d{\delta}
\nc {\nd}{\partial}

\nc {\bdm} {\begin{displaymath}}
\nc {\edm} {\end{displaymath}}

\newtheorem {theorem} {\bf{Theorem}}[section]
\newtheorem {lemma}[theorem] {\bf Lemma}
\newtheorem {proposition}[theorem] {\bf Proposition}

\newtheorem {remark}[theorem]{Remark}
\newtheorem {definition}[theorem]{\bf Definition}
\newtheorem {corollary}[theorem] {\bf Corollary}
\numberwithin {equation}{section}

\newcommand\EE{\mathbb{E}}
\newcommand\ZZ{\mathbb{Z}}

\newcommand{\Ou}{\enm{\mathcal{O}}}

\nc{\J}{\enm{\mathcal{J} }}
\nc {\Z} {\enm{\mathbb{Z}}}
\nc {\form}[1] {\enm{\mbox{\underline{for}}}_{#1}}
\nc {\prol}[1] {\enm{\mbox{\underline{prol}}_{{#1}^*}}}

\nc {\stk} {\stackrel}

\newcommand{\map}{\rightarrow}

\newcommand{\inj}{\hookrightarrow}

\newcommand{\dualmod}[1]{{#1}^{\vee}}

\newcommand{\Pn}[2] {\ensuremath{ {\mathbb{P}}^{#1}_{#2}}}
\nc{\Quot}[3]{\enm{ {\mathfrak{Quot}_{ {#1}/{#2}/{#3}}}}}
\nc{\Hilb}[2]{\enm{ {\mathfrak{Hilb}_{ {#1}/{#2}}}}}
\newcommand{\mfrak}[1]{\mathfrak{#1}}

\newcommand{\bb}[1]{\mathbb{#1}}
\newcommand{\mcal}[1]{\mathcal{#1}}

\newcommand{\Q}{\mathbb{Q}}
\nc {\Coh}[4] {\ensuremath{H^{#1}(\Pn{#2}{},{#3}({#4}))}}
\nc {\Ch}[3] {\enm{H^{#1}(X_t,{#2}_t({#3}))}}
\nc {\Qphi}[4]{\enm{ {\mathfrak{Quot}^{~#4}_{ {#1}/{#2}/{#3}}}}}
\nc {\Gra}[4]{\enm{ {\mathfrak{Grass}_{#2}({#3},{#4})}}}
\nc {\HomA}[2]{\enm{\mathrm{Hom}_A{#1}{#2}}}
\nc {\tr}{\mathrm{tr}}

\nc {\C}[2]{\enm{\left(\begin{array}{l} {#1} \\ {#2} \end{array} \right)}}
\nc {\mat}[4]{\enm{\left(\begin{array}{ll}{#1} & {#2} \\ {#3} & {#4}
\end{array}\right)}}

\def \mb{\mbox}

 \def \Z{{\mathbb Z}}  
\def \cF{\mathcal F}

  \def \cF{\mathcal F} \def \h{\hat{\ }}

\def \d{\delta}   \def \bF{{\bf F}}

\def \cO{\mathcal O}  \def \bX{{\bf X}} \def \bH{{\bf H}}
   \def \bF{{\mathbb F}}

\def \hG{\hat{\mathbb{G}}_{\mathrm{a}}}

\def \R1{R((q))[q']\h}

\usepackage{xcolor}

\DeclareMathOperator{\Spec}{\mathrm{Spec}}
\DeclareMathOperator{\Spf}{\mathrm{Spf}}
\DeclareMathOperator{\Lie}{\mathrm{Lie}}
\DeclareMathOperator{\rk}{\mathrm{rk}}
\newcommand{\Hom}{\mathrm{Hom}}
\newcommand{\End}{\mathrm{End}}
\newcommand{\Ext}{\mathrm{Ext}}

\newcommand{\bI}{{\bf I}}
\newcommand{\switt}{s_{\mathrm{Witt}}}

\newcommand{\longlabelmap}[1]{{\,\buildrel #1\over\longrightarrow\,}}
\newcommand{\longmap}{{\,\longrightarrow\,}}

\newcommand{\beqar}{\begin{eqnarray*}}
\newcommand{\eeqar}{\end{eqnarray*}}

\newcommand{\mff}{\mfrak{f}}

\nc{\bx}{\mathbf{x}}
\nc{\by}{\mathbf{y}}
\nc{\bz}{\mathbf{z}}
\nc{\ba}{\mathbf{a}}
\nc{\Fp}{\tilde{F}}
\nc{\Rp}{\tilde{R}}
\nc{\mlow}{m_{\mathrm{l}}}
\nc{\mup}{m_{\mathrm{u}}}
\nc{\ord}{\mb{ord }}
\nc{\bXp}{\bX_{\mathrm{prim}}}
\nc{\bPsi}{\mathbf{\Psi}}
\nc{\mult}{\mathrm{mult}}
\nc{\mbB}{\mathbbm{B}}
\nc{\mfor}[1]{{#1}^{\mathrm{for}}}
\nc{\Hdr}{\bH^1_{\mathrm{dR}}}
\nc{\Hcr}{\bH^1_{\mathrm{cris}}}
\nc{\Fc}{F_{\mathrm{cris}}}
\nc {\Hd}{\bH_{\d}}
\nc{\mn}{[m]n}

\nc{\tF}{\tilde{F}}
\nc{\fra}{\mfrak{f}}
\nc{\bt}{{\bf t}}
\nc{\Nn}{N^{\mn}}
\nc{\del}{\Delta}
\nc{\tilW}{\tilde{W}}
\nc{\bo}{{\bf b}}

\nc{\Di}[2]{\Delta^{#1}i^*\d^{#2}}
\nc{\di}[1]{i^*\d^{#1}}
\nc{\ue}{u_e}
\nc{\hP}{\hat{\bPsi}}

\newcommand{\Iso}{FIso}

\newcommand{\ts}{\widetilde{s}_{\mathrm{Witt}}}

\title{Delta Isocrystal and Crystalline Cohomology of abelian schemes}
\author{Sudip Pandit}
\date{}
\email{sudip.pandit@kcl.ac.uk, sudippandit20011996@gmail.com}
\address{King's College London, Strand Campus, United Kingdom}
\subjclass[2020]{Primary 11G07, 11G10, 11G25, 14B20, 14F30, 14F40, 14G20, 14K15, 14L05, 14L15.}

\keywords{Witt vectors, lift of Frobenius, arithmetic jet spaces, delta characters, abelian schemes, universal vectorial extension, de Rham cohomology, crystalline cohomology, filtered $F$-isocrystals}

\begin{document}

\maketitle

\begin{abstract}

We provide an explicit description of the smallest filtered sub-isocrystal generated by the Hodge filtered piece of the crystalline cohomology for an abelian scheme over a $p$-adic ring. Our method is based on the theory of arithmetic jet spaces and delta characters associated to the abelian scheme, introduced by Buium and studied later by Borger and Saha with a functor of points approach. In particular, we prove that the delta isocrystal constructed by Borger and Saha is indeed isomorphic to the fundamental smallest sub-isocrystal of the crystalline cohomology in the category of filtered $F$-isocrystals.  As an application, we establish a comparison isomorphism between the delta isocrystal and the crystalline cohomology of abelian schemes, which is governed by the group of order $1$ delta characters of the abelian scheme. \end{abstract}
\tableofcontents
\section{Introduction}
Let $p$ be a prime. Let $R$ be a $p$-adically complete discrete valuation ring with perfect residue field $k$ of characteristic $p,$ and  $K$ be the field of fractions.  Given an abelian scheme $A$ over $R,$ we denote $A_0:= A \times_{\Spec R} \Spec k,$  the special fiber over $k$.  The de Rham--crystalline comparison isomorphism $\Hcr(A_0)_K \simeq \Hdr(A)_K$ (cf.~\cite{BO}) associates a natural filtered $F$-isocrystal 
$$\mathrm{\Iso}(\Hcr(A)_{K}) := (\Hcr(A_0)_{K}, \Fc,
\Hcr(A)^\bullet_K),$$ 
where $\Hcr(A)^\bullet_K$ is the Hodge filtration given by $$\Hcr(A_0)_K \simeq \Hdr(A)_K
\supset H^0(A,\Omega_A)_K \supset \{0\}$$ and $\Fc$ is the crystalline Frobenius
operator on $\Hcr(A_0)_K$. 
This is a fundamental object in $p$-adic Hodge theory, particularly, in the study of $p$-adic Galois representations (cf.~\cite{BriCon}). In this article, we address the following fundamental questions:
\begin{itemize}
\item[(1)] What is the smallest sub-object of $\mathrm{\Iso}(\Hcr(A)_{K})$ containing $H^0(A,\Omega_A)_K$ in the category of filtered $F$-isocrystals~?
\vspace{0.2cm}

\item[(2)] Is there any elementary description of $H^0(A,\Omega_A)_K\cap \Fc (H^0(A,\Omega_A)_K)?$ \vspace{0.2cm}
\item[(3)] When $\mathrm{\Iso}(\Hcr(A)_{K})$ is generated by the invariant differentials $H^0(A,\Omega_A)_K$ in the category of filtered $F$-isocrystals~?

\end{itemize}
We provide an explicit answer to $(1)$ [cf. Corollary \ref{cor-intro}] and $(2)$ [cf. Corollary \ref{cor-intro-0}] using the theory of arithmetic jets of $A,$ and also provides a sufficient criterion for $(3)$ [cf. Theorem \ref{intro-main-3}] in terms of vanishing of the group of additive characters of the first jet $J^1A,$ which is known to hold for ``sufficiently generic" class of abelian schemes.

Buium developed the theory of arithmetic jet spaces in the category of $p$-formal schemes over $R$ in a series of articles \cite{bui95, bui97, bui00, bui-book, bui-new} in the realm of delta geometry. The theory was further explored in several articles including \cite{barc, BL, buium-miller-0, buium-miller, hurl}. Most notably in \cite{bui95}, Buium introduced the additive characters of arithmetic jet spaces of a smooth commutative group scheme $G$, which are known as the delta characters of $G.$ The study of these delta characters was motivated by their function-field companions called the differential characters (also known as the Manin characters) studied earlier by Manin and then by Buium, which led to the proof of Mordell--Lang conjecture over function fields \cite{bui92, buiHbook, Manin63}. 

Following a similar principle, the arithmetic jet spaces and delta characters found explicit Diophantine applications over number fields using $p$-adic methods. To mention a few selectively,  Buium proved the Manin--Mumford conjecture for curves, obtaining an explicit bound in \cite{bui96}. Buium--Poonen proved the finiteness of an unlikely intersection in Modular--Elliptic correspondences in \cite{BP09}. Recently, the author and Dogra obtained an explicit Mordell--Lang bound for curves in \cite{DP25}, and
Miller--Morrow obtained an explicit prime-to-$p$ Manin--Mumford bound for higher-dimensional varieties in \cite{MM25}.

On the other hand, Borger developed the geometry of Witt vectors and algebraic jet spaces as a functor of points in \cite{bor11a, bor11b}. Later, the two theories of jet spaces have been reconciled by Bartapelle--Previato--Saha in \cite{bps}. Using the functor of points approach, Borger--Saha revived the study of delta characters by relating them with the $p$-adic cohomology in \cite{BS_b}. More specifically, their approach led to constructing a filtered semilinear object $(\Hd(G),\mff^*,(\Hd(G)\supset \bXp(G)\supset \{0\})$ associated to a smooth commutative group scheme $G$ (cf.~section \ref{delta-iso} for the construction). Afterwards, the author and Saha have further developed the delta geometric foundation, pinning down the explicit relation between the arithmetic jets and their kernels, and established a comparison isomorphism between $\Hd(A)_K$ and the crystalline cohomology $\Hcr(A_0)_K$,  for an elliptic curve $A$ over $\ZZ_p$ in \cite{PS-2}. Subsequently in \cite{GPS}, the author with Gurney and Saha proved that the filtered semilinear object due to Borger--Saha is indeed a filtered $F$-isocrystal for any semiabelian scheme $G$, and extended the comparison isomorphism for  elliptic curves over finite extensions of $\Q_p.$  Such a theory has also been developed analogously in the positive characteristic context in \cite{BS_a, PS-1}.

In this article we provide an explicit description of the  delta isocrystal $\Hd(G)_K$ of a semiabelian scheme $G$ in terms of the module of primitive delta characters $\bXp(G)_K$ and its successive images under the Frobenius operator $\mff^*$. Our strategy includes establishing the compatibility between the crystalline Frobenius on the universal vectorial extension and the Frobenius on the first jet space of $A$. This will lead to the Frobenius compatibility between the delta isocrystal and the first crystalline cohomology of an abelian scheme $A$,
showing that the delta isocrystal is the smallest sub-object of the crystalline cohomology containing $H^0(A,\Omega_A)$ in the category of filtered $F$-isocrystals. This clarifies a heuristic question raised by Buium on a precise relation between the group of delta characters and the crystalline cohomology (cf.~Remark $4.6$ in \cite{bui95}). As applications, we prove comparison isomorphisms between $\Hd(A)_K$ and $\Hcr(A_0)_K$ for an abelian scheme $A$ in the category of filtered $F$-isocrystals.

 Delta geometry is intimately related to the theory of prismatic cohomology due to Bhatt--Scholze  \cite{bhsch}.
In a subsequent work, the connection between the delta isocrystal and the first prismatic cohomology of an abelian scheme would be made explicit. 
We will now state our main results following the notations in section \ref{notation}.

\subsection{Main results}Define the filtered subspaces of $\Hd(G)_K$ as
\begin{align*}\cF_{0}&= \bXp(G)_K\\
                                     \cF_{i+1}&=\bXp(G)_{K}+\mff^{*} \cF_i; ~ \mathrm{for ~}i\geq 0.
\end{align*}
Let $m_u$ be the upper splitting number of $G$ as defined in section \ref{delta-char}. Our first main result provides an explicit description of the delta isocrystal for a semiabelian scheme.
\begin{theorem}\label{intro-main-1} Let $G$ be a $\pi$-formal semiabelian scheme over $R$. Then $$\bH_{\d}(G)_K=\cF_{m_u-1}.$$ 
\end{theorem}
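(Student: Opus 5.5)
The plan is to prove the two inclusions $\cF_{m_u-1}\subseteq \bH_\d(G)_K$ and $\bH_\d(G)_K\subseteq \cF_{m_u-1}$ separately, working throughout with the presentation of $\bH_\d(G)$ from section \ref{delta-iso}. There $\bH_\d(G)$ is built out of the modules $\bX_n(G)=\Hom(J^nG,\hG)$ of delta characters of order $\le n$, together with the kernels $N^{\mn}$ of the jet projections. The Frobenius $\mff^*$ is induced by pulling back along the lift of Frobenius on jets, which raises the order of a character by one. The module $\bXp(G)$ is the image of the primitive characters, and it is the Hodge-type piece of the filtration.

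The inclusion $\cF_{m_u-1}\subseteq\bH_\d(G)_K$ is formal. We have $\bXp(G)_K\subseteq\bH_\d(G)_K$ by construction, and $\bH_\d(G)_K$ is stable under $\mff^*$, so induction on $i$ gives $\cF_i\subseteq\bH_\d(G)_K$ for all $i$. The chain $\cF_0\subseteq\cF_1\subseteq\cdots$ is increasing, and I will use this monotonicity repeatedly.

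For the reverse inclusion I would argue by order of characters. Step one is to identify $\bH_\d(G)_K$ with the image of $\bX_{m_u}(G)_K$, or more precisely with the span of the images of characters of order $\le m_u$. This should follow from the defining property of the upper splitting number in section \ref{delta-char}: beyond order $m_u$ the structure of $\bX_n(G)$ stabilises, every new character of order $n+1>m_u$ is obtained from $\bX_n(G)$ by applying $\phi$, and such characters therefore contribute nothing new to $\bH_\d$. Step two is an induction on the order $n$, for $1\le n\le m_u$. The claim is that the image in $\bH_\d(G)_K$ of $\bX_n(G)_K$ is contained in $\cF_{n-1}$. The base case $n=1$ says that characters of minimal order map into the primitive part, which should hold essentially by the definition of $\bXp$. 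For the inductive step I take $\Theta\in\bX_{n}(G)_K$ and look at its restriction to the kernel $N^{\mn}$ of $J^nG\to J^{n-1}G$, which is a vector group isomorphic to $\hat{\mathbb{G}}_a^g$. Its top-order symbol should be, over $K$, a $\phi$-linear combination of symbols of characters of the form $\mff^*\Theta'$ with $\Theta'$ of order $n-1$, plus a primitive character of order $n$. Subtracting these terms leaves a character of order $n-1$. Then the inductive hypothesis and $\mff^*\cF_{n-2}\subseteq\cF_{n-1}$ place $\Theta$ in $\bXp(G)_K+\mff^*\cF_{n-2}=\cF_{n-1}$. Taking $n=m_u$ gives $\bH_\d(G)_K\subseteq\cF_{m_u-1}$.

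I expect the main obstacle to be this symbol decomposition in the inductive step. Concretely, one must show that, after tensoring with $K$, the restriction map $\bX_n(G)_K\to\Hom(N^{\mn},\hG)_K$ has image equal to the image of $\mff^*\bX_{n-1}(G)_K$ plus that of the primitive characters. This requires controlling the kernels $N^{\mn}$ and the maps between them induced by $\phi$ and by the projections, which are given by explicit matrices built from $p$ and Frobenius twists. The integral statement may fail because of $p$-power denominators, which is why everything is done over $K$. I would first try to reduce to the explicit description of $N^{\mn}$ and of $i^*\d^n$ from the earlier sections, and check that the relevant matrix becomes invertible after inverting $\pi$.
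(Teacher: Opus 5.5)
There is a genuine gap, and it is already in your Step one. The only map from delta characters to $\Hd(G)$ is $i^*$. The image of all of $\bigcup_n \bX_n(G)$ under it is, by definition, $\bXp(G)$, i.e.\ $\cF_0$.

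The delta isocrystal is built instead from $\Hom(N^nG,\hG)$, where $N^nG=\ker(J^nG\to G)$, not the kernel of $J^nG\to J^{n-1}G$. A character $\Psi$ of $N^nG$ typically does not extend to $J^nG$: the obstruction is $\partial\Psi\in\Ext(G,\hG)$, and $\Hd(G)/\bXp(G)\cong\bI(G)$. So $\Hd(G)_K$ is not spanned by images of characters of order $\le m_u$. For a non-CL elliptic curve, $\Hd(A)_K$ is $2$-dimensional, while that span is the line $\bXp(A)_K$. An induction that only tracks $\bX_n(G)_K$ can therefore at best prove the trivial inclusion $\bXp(G)_K\subseteq\cF_{m_u-1}$.

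Your description of $\mff^*$ has the same problem. It is not pullback along the Frobenius $\phi$ of $J^*G$; classes $i^*\phi^*\Theta$ are zero in $\Hd$ by construction. It is induced by the Frobenius $\mff$ of the prolongation sequence $N^*G\simeq J^{*-1}(N^1G)$, and satisfies $\mff^*i^*\Theta=i^*\phi^*\Theta+\gamma_\Theta\cdot\bPsi_1$ (Proposition~\ref{diff}). It is exactly the correction term $\gamma_\Theta\cdot\bPsi_1$, a character of $N^1G$ that in general does not extend to $J^1G$, that produces the part of $\Hd$ outside $\bXp$. In particular, $\mff^*\Theta'$ for $\Theta'\in\bX_{n-1}(G)$ is not a character of $J^nG$, so the symbol decomposition planned for your inductive step is not a meaningful statement. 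Also, $m_u$ is defined by the ranks of the $\bI_n(G)$, not by a stabilisation of the $\bX_n(G)$.

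The missing idea is to work on the kernel side and identify $\mff^*\cF_n$ with $\bH_{n+1}(G)_K$. The paper proceeds as follows.

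First, $\mff^*\colon\bXp(G)_K\to\bH_1(G)_K$ is an isomorphism: it is injective by Theorem~\ref{bijfra-1}, and both sides are $g$-dimensional.

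Second, the identity $\Hom(N^{n+1}G,\hG)=\Hom(N^1G,\hG)+\mff^*\Hom(N^nG,\hG)$ of Borger--Saha descends to $\bH_{n+1}(G)_K=\bH_1(G)_K+\mff^*\bH_n(G)_K$. Induction then gives $\mff^*\cF_n=\bH_{n+1}(G)_K$, hence $\cF_{n+1}=\bXp(G)_K+\bH_{n+1}(G)_K$.

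Third, combine this with $\bXp(G)_K\cap\bH_n(G)_K=\bX_n(G)_K/\phi^*\bX_{n-1}(G)_K$ (Lemma~\ref{com-2}). This yields $\cF_n/\cF_0\cong\bI_n(G)_K$.

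Finally, the definition of $m_u$ gives $\bI_{m_u-1}(G)_K=\bI(G)_K\cong\Hd(G)_K/\bXp(G)_K$. Hence $\cF_{m_u-1}=\Hd(G)_K$.
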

\noindent We have proved the above theorem as Theorem \ref{delta-crys-main}, which plays a crucial role in showing the Frobenius compatibility between the delta isocrystal and the crystalline cohomology in Theorem \ref{intro-main-2}, and essentially allows us to reduce proving only the Frobenius compatibility between the first jet space $J^1A$ and the universal vectorial extension $E(A)$ of an abelian scheme $A.$ 
 
 Borger--Saha showed  in \cite{BS_b}  that the canonical exact sequence associated to the delta isocrystal of an abelian scheme $A$ admits a map to the Hodge sequence associated to the de Rham cohomology of $A$ as below:
\begin{align*}
\xymatrix{
	0 \ar[r] & \bXp(A) \ar[d]^\Upsilon \ar[r] & \Hd(A) \ar[d]^\Phi \ar[r] & \Ext(A,\hG) \ar[d]^{\simeq}  \\
	0 \ar[r] & H^0(A,\Omega_A) \ar[r] &\bH^1_{\mathrm{dR}}(A) \ar[r] & H^1(A,\Ou_A) \ar[r] & 0.
	}
\end{align*}
Furthermore, by Theorem $1.1$ in \cite{GPS}, it is known that the map $\Phi$ is injective. Our next result is:

\begin{theorem}\label{intro-main-2} Let $A$ be a $\pi$-formal abelian scheme over $R.$ Then the injective map $\Phi:\Hd(A)_K\to \Hdr(A)_K$ is Frobenius equivariant and a map of filtered $F$-isocrystals. In other words, $$\Phi\circ \mff^*=\Fc\circ\Phi.$$
\end{theorem}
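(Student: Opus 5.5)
The plan is to reduce the Frobenius compatibility on all of $\Hd(A)_K$ to a compatibility on primitive delta characters, using Theorem \ref{intro-main-1}. That theorem gives $\Hd(A)_K=\cF_{m_u-1}$. So every element of $\Hd(A)_K$ is a sum of elements of the form $(\mff^*)^j\Theta$ with $\Theta\in\bXp(A)_K$ and $0\le j\le m_u-1$.

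Suppose we know $\Phi(\mff^*\Theta)=\Fc(\Phi(\Theta))$ for every $\Theta\in\cF_i$ whose image $\mff^*\Theta$ still lies in $\Hd(A)_K$. Then induction on $i$ gives $\Phi\circ\mff^*=\Fc\circ\Phi$ on $\cF_{i+1}$, hence on everything. Both sides are $\sigma$-semilinear and additive, so it suffices to check the identity on generators. Concretely, the key step is:

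\[\Phi(\mff^*\Theta)=\Fc(\Upsilon(\Theta))\quad\text{for }\Theta\in \bXp(A)_K,\]

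together with the analogous identity for the iterates. Filtration compatibility is then automatic, because $\Phi$ restricts to $\Upsilon:\bXp(A)\to H^0(A,\Omega_A)$ in the Borger--Saha diagram.

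To prove the key identity, I would recall how $\Phi$ is built. A class in $\Hd(A)$ gives an extension of $A$ by $\hG$ together with delta data. Via the universal property of the universal vectorial extension $E(A)$, this gives a map $\Hd(A)\to \Hom(E(A)^\wedge,\hG)\simeq\Hdr(A)$. The Frobenius $\mff^*$ on $\Hd(A)$ is induced by the lift of Frobenius $\phi:J^1A\to A$, that is, by pulling delta characters back along $\phi$ (composition with the Frobenius on jets). On the crystalline side, $\Fc$ on $\Hdr(A)_K\simeq\Hcr(A_0)_K$ can be computed from any lift of the absolute Frobenius. Equivalently, by Grothendieck--Messing/Mazur--Messing, it is computed on $E(A)$ via a morphism lifting Frobenius modulo $p$, with the result independent of the lift up to the crystal's connection.

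The plan is to construct a homomorphism of formal groups $J^1A\to E(A)$, or rather from the pullback of $E(A)$ along $\phi$, compatible with $\phi$ and the projection. This realizes $\phi$ as a ``Frobenius lift'' in the crystalline sense. Pulling back the universal vectorial extension along $\phi$ then computes $\Fc$, while pulling back delta characters along $\phi$ computes $\mff^*$. Since $\Phi$ is defined by this very pullback, the two agree.

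The main obstacle is exactly this compatibility between the crystalline Frobenius on $E(A)$ and the jet Frobenius $\phi$. Crystalline Frobenius is defined through divided power thickenings of $A_0$, whereas $\phi$ is a $p$-derivation structure on $J^1A$, and $J^1A\to A$ is not a PD-thickening. I would first try to work modulo $p$: reduce $\phi$ mod $p$ to the absolute Frobenius composed with the projection $J^1A_0\to A_0$. Then I would invoke the crystal property, that homotopic Frobenius lifts induce the same map on $\Hcr$, to identify the pullback along $\phi$ with $\Fc$ after inverting $p$. This only requires working rationally, which is where the subscript $K$ is needed, because denominators from divided powers appear.

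Finally, I would verify that the identification $\Ext(A,\hG)\simeq H^1(A,\Ou_A)$ is respected, so the diagram of exact sequences commutes with both Frobenii. Injectivity of $\Phi$ (\cite{GPS}) then shows that $\Phi(\Hd(A)_K)$ is an $\Fc$-stable filtered subobject, i.e.\ that $\Phi$ is a morphism of filtered $F$-isocrystals.
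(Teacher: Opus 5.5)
Your overall shape (reduce to primitive characters via Theorem \ref{delta-crys-main}, then realize $\phi$ as the crystalline Frobenius through $E(A)$) matches the paper. But the key identity $\Phi(\mff^*\Theta)=\Fc(\Upsilon(\Theta))$ is never actually established, and the mechanism you propose rests on two misidentifications.

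First, $\mff^*$ is not pullback along $\phi$. It is pullback along $\fra\colon N^{n+1}\to N^n$, and the classes $i^*\phi^*\Theta$ are exactly what is divided out in $\Hd(A)$, so pulling back along $\phi$ gives $0$ there. The real content is Proposition \ref{diff}(1): $\mff^*[\Theta]=\gamma_\Theta\cdot\bPsi_1$ with $\gamma_\Theta=\pi A_0$, coming from the defect $\iota\circ\fra-\phi\circ\iota=\Delta\circ u$.

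Second, you give no construction of a map out of $J^1A$. The ``homotopic Frobenius lifts'' argument also does not apply, because $\phi\colon J^1A\to A$ is not a Frobenius lift on $A$. The missing idea is to go the other way, via the universal property of jets (Theorem \ref{Uni-jet}). Since $\EE(F)$ reduces to $E(F)$, which lies over the Frobenius $F$ of $A_0$, the pair $(u_e,u_e\circ\EE(F))\colon E(A)\to A$ is a prolongation. This yields $\xi\colon E(A)\to J^1A$ with $\phi\circ\xi=u_e\circ\EE(F)$, hence $d\xi\circ d\phi=\Fc$ on $\Lie(A)^\vee$ (Corollary \ref{frob-eq}); no PD-thickening argument is needed.

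You then need the explicit level-one form of $\Phi$. From $\Hom(E(A),\hG)=0$ one gets $g_\Psi\circ\xi=\xi_\Psi$ and $\chi_\Psi=\Psi\circ\chi$. Hence $\Phi(\Psi)=d\xi(\ts(D\Psi))$ for $\Psi\in\Hom(N^1,\hG)$, with the Lie splitting $s_\Psi$ coming from $\switt$. Since $\phi\circ i$ is $\bx\mapsto\pi\bx$ and $D\log_G=\mathrm{id}$, one has $\ts(D\bPsi_1)=\frac1\pi d\phi(d\bx_0)$, so $\Phi(\bPsi_1)=\frac1\pi\Fc(d\bx_0)$. With $\Upsilon(\Theta)=A_0\,d\bx_0$ this gives $\Phi(\mff^*\Theta)=\pi A_0\cdot\frac1\pi\Fc(d\bx_0)=\Fc(\Upsilon(\Theta))$. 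None of this computation, which is where the proof actually lives, appears in your sketch.

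Your reduction paragraph is also circular as written. Knowing the identity on $\cF_i$ says nothing about $\Phi(\mff^*y)$ for $y\in\mff^*\cF_i\subset\cF_{i+1}$. What Theorem \ref{delta-crys-main} really reduces the statement to is $\Phi((\mff^*)^j\Theta)=\Fc^j\Upsilon(\Theta)$ for all $\Theta\in\bXp(A)_K$ and all $j\le m_u$. This is not a formal consequence of the case $j=1$: given $j=1$, for a non-CL elliptic curve the case $j=2$ is equivalent to $\mff^*$ and $\Fc$ having the same characteristic polynomial.

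The paper also passes from Theorem \ref{delta-crys-main} to the primitive-level Theorem \ref{Frob-main-thm} in a single sentence, so this is not where you depart from it. Still, you yourself flag ``the analogous identity for the iterates'', and that step needs its own argument rather than an induction on $\cF_i$.
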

\noindent We have proved the above theorem as Theorem \ref{main-thm}. Let $\bX_1(A):=\Hom_R(J^1A,\hG)$ denote the group of delta characters of first order. Following \cite{GPS}, there is a $K$-linear isomorphism (cf. Theorem \ref{GPS-1} for the definition) $$\Upsilon: \bXp(A)_K\to H^0(A,\Omega_A)_K.$$ The restriction of the above map to $\bX_1(A)_K\hookrightarrow \bXp(A)_K$ gives us a very interesting result:

\begin{theorem}\label{cor-intro-0} Let $A$ be a $\pi$-formal abelian scheme over $R.$ Then the following map is a $K$-linear isomorphism
$$\Upsilon:\bX_1(A)_K\xrightarrow{\simeq} H^0(A,\Omega_A)_K\cap\Fc (H^0(A,\Omega_A)_K).$$
\end{theorem}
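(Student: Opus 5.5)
We will derive the statement from Theorem \ref{intro-main-2} together with the structure of the delta isocrystal from Borger--Saha and \cite{GPS}. First recall the set-up in $\Hd(A)_K$. Primitive delta characters are $\bXp(A)$. The Frobenius $\mff^*$ is induced by pulling back characters along the lift of Frobenius $\phi: J^{n+1}A\to J^nA$. Under this pull-back, $\mff^*$ carries a character of order $n$ to one of order $n+1$. The first-order characters $\bX_1(A)=\Hom(J^1A,\hG)$ are exactly those primitive characters that come from order $\le 1$. The first step is to establish, inside $\Hd(A)_K$, the identity
\begin{align*}
\bX_1(A)_K = \bXp(A)_K\cap \mff^*(\bXp(A)_K).
\end{align*}

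The inclusion $\subseteq$ should follow from the construction of $\Hd$. There, a class in $\bX_1$ is identified, modulo the relations defining $\Hd$, with the $\mff^*$-image of an element of $\bXp$. This is the Borger--Saha description, in which $\Hd$ is the quotient of characters by $\mff^*$-translates of order-zero data, so an order-one primitive character $\Theta$ becomes $\mff^*$ of the corresponding ``primitive'' class one level down. For $\supseteq$, suppose $\Theta=\mff^*\Psi$ with $\Theta,\Psi$ primitive. We would compare orders. If $\Psi$ has order $m$, then $\mff^*\Psi$ has order $m+1$, while a primitive character of order $m+1$ has minimal order. We would use the splitting numbers $m_l,m_u$ of section \ref{delta-char} and the explicit kernel description of $\bXp$ as a free module with generators in orders between $m_l$ and $m_u$, which forces $m=0$, that is $\Theta\in\bX_1$. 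I expect this identification of $\bXp\cap\mff^*\bXp$ with $\bX_1$ inside $\Hd(A)_K$ to be the main obstacle. It needs careful control of how $\mff^*$ interacts with the order filtration after tensoring with $K$, and I would first try to read it off from the filtration $\cF_i$ used in the proof of Theorem \ref{intro-main-1}.

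Next we apply $\Phi$. By Theorem $1.1$ of \cite{GPS}, $\Phi$ is injective on $\Hd(A)_K$, and by Theorem \ref{GPS-1} it restricts on $\bXp(A)_K$ to $\Upsilon$, which is an isomorphism onto $H^0(A,\Omega_A)_K$. By Theorem \ref{intro-main-2}, $\Phi\circ\mff^*=\Fc\circ\Phi$. Hence $\Phi(\mff^*\bXp(A)_K)=\Fc(H^0(A,\Omega_A)_K)$. Since $\Phi$ is injective, it commutes with intersections, so
\begin{align*}
\Phi\bigl(\bXp(A)_K\cap\mff^*\bXp(A)_K\bigr)=H^0(A,\Omega_A)_K\cap\Fc(H^0(A,\Omega_A)_K).
\end{align*}
Combining this with the identity from the first step, the restriction of $\Upsilon$ to $\bX_1(A)_K$ is injective, being the restriction of the isomorphism $\Upsilon$ on $\bXp(A)_K$. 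Its image is exactly the intersection above, so the restricted map is the claimed $K$-linear isomorphism.
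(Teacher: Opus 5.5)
Your overall architecture is the paper's: it combines Theorem \ref{order-1}, Theorem \ref{GPS-1} and Theorem \ref{Frob-main-thm}. Your second step is fine: $\Phi$ is injective, restricts to $\Upsilon$ on $\bXp(A)_K$, and intertwines $\mff^*$ with $\Fc$. Hence it carries $\bXp(A)_K\cap\mff^*\bXp(A)_K$ onto $H^0(A,\Omega_A)_K\cap\Fc(H^0(A,\Omega_A)_K)$. The gap is the first step, the identity $\bX_1(A)_K=\bXp(A)_K\cap\mff^*\bXp(A)_K$ in $\Hd(A)_K$. This carries all the content, you flag it as the main obstacle without proving it, and the mechanism you sketch for it is wrong. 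First, the relations defining $\bH_n(A)$ are $i^*\phi^*\bX_{n-1}(A)_\phi$, i.e. $\phi^*$-pullbacks of all lower-order characters, not $\mff^*$-translates of order-zero data. Second, $\mff^*$ does not raise the order of a primitive class. By Proposition \ref{diff}(1), $\mff^*(i^*\Theta)=i^*\phi^*\Theta+\gamma_\Theta\cdot\bPsi_1$, and the first term is one of the relations. So $\mff^*[\Theta]=\gamma_\Theta\cdot\bPsi_1$ lies in $\bH_1(A)_K$ for every primitive $\Theta$, whatever its order. Third, your order count cannot be right: taken literally, ``$m=0$'' forces $\Psi\in\bX_0(A)=\Hom(A,\hG)=0$, hence $\Theta=0$. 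That is false whenever $\bX_1(A)\neq 0$; for CL $A$, for instance, $\bXp(A)_K=\bX_1(A)_K=\Hd(A)_K$ with $\mff^*$ bijective, so the relevant $\Psi$ have order $1$.

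The missing idea is exactly this collapse of $\mff^*\bXp(A)_K$ into level one. It gives $\mff^*\bXp(A)_K\subseteq\bH_1(A)_K=\Hom(N^1,\hG)_K$. Equality holds because $\Theta\mapsto\gamma_\Theta=\pi A_0$ is onto $K^g$, which is the surjectivity of $\Upsilon$ in Theorem \ref{GPS-1} (equivalently Lemma \ref{frob-bij-2}). Hence $\bXp(A)_K\cap\mff^*\bXp(A)_K=\bXp(A)_K\cap\bH_1(A)_K$. Lemma \ref{com-2} (injectivity of $i^*$ together with the form of the relations) identifies the latter with $\bX_1(A)_K/\phi^*\bX_0(A)_K=\bX_1(A)_K$. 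This is Theorem \ref{order-1}, and it needs no splitting numbers and no order bookkeeping on $\Psi$. With it in place, your second paragraph finishes the proof as in the paper.
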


The above Theorem has been proved as Corollary \ref{cor-int}. Given an abelian scheme $A$ over $R,$  the theory of  arithmetic jets associates the  filtered $F$-isocrystal 
$$
\mathrm{\Iso}(\bH_\d(A)_{K}) := (\bH_\d(A)_{K}, \fra^*, 
\bH_\d(A)_{K}^\bullet),
$$
where $\bH_\d(A)_{K}^\bullet$ is the filtration given by 
$\bH_\d(A)_{K} \supset \bXp(A)_{K} \supset \{0\}$.


As a consequence of Theorem \ref{intro-main-1} and Theorem \ref{intro-main-2} we can draw the following result:
\begin{corollary}\label{cor-intro} Let $A$ be a $\pi$-formal abelian scheme over $R.$ Then $\mathrm{\Iso}(\bH_\d(A)_{K})$ is the smallest sub-object of $\mathrm{\Iso}(\Hcr(A)_{K})$ containing $H^0(A,\Omega_A)$ in the category of filtered $F$-isocrystals.
\end{corollary}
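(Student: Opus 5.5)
Proving Corollary~\ref{cor-intro} means showing two things: $\Phi$ identifies $\mathrm{\Iso}(\bH_\d(A)_K)$ with a sub-object of $\mathrm{\Iso}(\Hcr(A)_K)$, and any sub-object containing $H^0(A,\Omega_A)_K$ contains this image.

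\textbf{Step 1: $\Phi$ embeds $\mathrm{\Iso}(\bH_\d(A)_K)$ as a sub-object.} By Theorem~1.1 of \cite{GPS}, $\Phi$ is injective. By Theorem~\ref{intro-main-2}, $\Phi\circ\mff^*=\Fc\circ\Phi$. Hence $\Phi(\Hd(A)_K)$ is an $\Fc$-stable $K$-subspace of $\Hcr(A_0)_K\simeq\Hdr(A)_K$, and $\Phi$ is an isomorphism of $F$-isocrystals onto it. For the filtrations, the commutative diagram of Borger--Saha gives $\Phi|_{\bXp(A)_K}=\Upsilon$, and Theorem~\ref{GPS-1} says $\Upsilon$ is an isomorphism onto $H^0(A,\Omega_A)_K$. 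Therefore $\Phi(\bXp(A)_K)=H^0(A,\Omega_A)_K$. It remains to check that the induced filtration on the image agrees with the filtration of $\Hd(A)_K$, i.e. that
$$\Phi(\Hd(A)_K)\cap H^0(A,\Omega_A)_K=\Phi(\bXp(A)_K).$$
One inclusion is clear. For the other, take $x\in\Hd(A)_K$ with $\Phi(x)\in H^0(A,\Omega_A)_K$. Then $\Phi(x)$ maps to $0$ in $H^1(A,\Ou_A)_K$. Since the right vertical map $\Ext(A,\hG)_K\to H^1(A,\Ou_A)_K$ is an isomorphism, the image of $x$ in $\Ext(A,\hG)_K$ vanishes, so $x\in\bXp(A)_K$ by exactness of the top row. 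This shows the image is a sub-object in the category of filtered $F$-isocrystals and contains $H^0(A,\Omega_A)_K$.

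\textbf{Step 2: minimality.} Let $(V,\Fc|_V,V\cap H^0(A,\Omega_A)_K)$ be any sub-object with $H^0(A,\Omega_A)_K\subset V$. I show by induction that $\Phi(\cF_i)\subset V$ for all $i$. For $i=0$, $\Phi(\cF_0)=\Phi(\bXp(A)_K)=H^0(A,\Omega_A)_K\subset V$. If $\Phi(\cF_i)\subset V$, then
$$\Phi(\cF_{i+1})=H^0(A,\Omega_A)_K+\Fc\,\Phi(\cF_i)\subset V,$$
using Frobenius equivariance and the $\Fc$-stability of $V$. Theorem~\ref{intro-main-1} gives $\Hd(A)_K=\cF_{m_u-1}$, so $\Phi(\Hd(A)_K)\subset V$.

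Formally everything reduces to Theorems~\ref{intro-main-1} and~\ref{intro-main-2}. The only point needing care is the strictness of the filtration in Step~1, which uses that the right vertical arrow is an isomorphism after tensoring with $K$, together with the exactness of the top row after $\otimes K$ (tensoring is flat over $R$). I also need to confirm that "sub-object" in the filtered category means the induced filtration. If instead one worries that $\Fc$ is only semilinear, the induction still works because $\Fc(V)\subset V$ as sets.
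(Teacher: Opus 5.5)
Your proposal is correct and takes the route the paper intends. The paper deduces this corollary directly from Theorem \ref{delta-crys-main} ($\Hd(A)_K=\cF_{m_u-1}$) and Theorem \ref{main-thm} (Frobenius equivariance of the injective map $\Phi$), which is exactly your induction on the $\cF_i$; you have written out the details the paper leaves implicit, and your strictness check in Step 1 also follows at once from the injectivity of $\Phi$ together with $\Phi(\bXp(A)_K)=H^0(A,\Omega_A)_K$.
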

\noindent  The above result in particular implies that the smallest sub-object containing the Hodge piece of the crystalline cohomology of an abelian scheme has a character theoretic interpretation. We are not aware if the sub-object $\mathrm{\Iso}(\bH_\d(A)_{K})$ can also be described explicitly using the slope decomposition of $\Hcr(A_0)_{K}$ in general.  \\

\noindent An abelian scheme $A$ over $R$ is said to have a {\it Canonical Lift (CL)}
if there exists a $\beta \in \End(A)$ such that $\beta \mod \pi$ is the
absolute $q$-power Frobenius on $A \otimes_R k$. We say $A$ is CL if it acquires such a lift; otherwise, it is called non-CL.

When $A$ is CL, it follows that $\Fc$ preserves the subspace $H^0(A,\Omega_A)_K$ and we have a canonical sub-object of $\mathrm{\Iso}(\Hcr(A)_{K})$ given by $$\mathrm{\Iso}(H^0(A,\Omega_A)_{K}):=(H^0(A,\Omega_A)_K, \Fc, (H^0(A,\Omega_A)_K\supset H^0(A,\Omega_A)_K\supset  \{0\})).$$ 
 As an application of Corollary \ref{cor-intro}, we derive the following comparison isomorphisms for abelian schemes:
 \begin{theorem}\label{intro-main-3} Let $A$ be a $\pi$-formal abelian scheme over $R$. 
\begin{itemize} 
\item [(1)] If $\bX_1(A)=\{0\}$ then $$\mathrm{\Iso}(\bH_\d(A)_{K})\simeq \mathrm{\Iso}(\Hcr(A)_{K})$$ in the category of filtered $F$-isocrystals. \vspace{0.2cm}
\item[(2)] If $A$ is CL then $\dim_K \bX_1(A)_K=g$ and $$\mathrm{\Iso}(\bH_\d(A)_{K})\simeq \mathrm{\Iso}(H^0(A,\Omega_A)_{K})$$
\end{itemize}
 \end{theorem}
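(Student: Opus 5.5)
We derive both parts from Corollary \ref{cor-intro}, Theorem \ref{cor-intro-0} and Theorem \ref{intro-main-1}.

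\textbf{Part (1).} Suppose $\bX_1(A)=\{0\}$. By Theorem \ref{cor-intro-0},
$$H^0(A,\Omega_A)_K\cap \Fc(H^0(A,\Omega_A)_K)=0.$$
Let $g=\dim A$. The space $H^0(A,\Omega_A)_K$ has dimension $g$. Since $\Fc$ is bijective on $\Hcr(A_0)_K$, its image $\Fc(H^0(A,\Omega_A)_K)$ also has dimension $g$. The two subspaces meet in zero, so their sum has dimension $2g=\dim_K \Hcr(A_0)_K$, and therefore
$$\Hcr(A_0)_K = H^0(A,\Omega_A)_K\oplus \Fc(H^0(A,\Omega_A)_K).$$
By Theorem \ref{intro-main-2}, $\Phi$ intertwines $\mff^*$ and $\Fc$, and $\Phi(\bXp(A)_K)=\Upsilon(\bXp(A)_K)=H^0(A,\Omega_A)_K$. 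Hence $\Phi(\cF_1)=H^0(A,\Omega_A)_K+\Fc(H^0(A,\Omega_A)_K)$, which is all of $\Hcr(A_0)_K$. Since $\Phi$ is injective, it is an isomorphism of $K$-vector spaces. It is compatible with Frobenius by Theorem \ref{intro-main-2}. It matches the Hodge filtrations, because $\bXp(A)_K$ maps onto $H^0(A,\Omega_A)_K$. So $\Phi$ is an isomorphism of filtered $F$-isocrystals.

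Alternatively, Corollary \ref{cor-intro} says $\mathrm{\Iso}(\bH_\d(A)_K)$ is the smallest sub-object containing $H^0(A,\Omega_A)_K$. Any sub-object containing $H^0(A,\Omega_A)_K$ is $\Fc$-stable, so it contains $\Fc(H^0(A,\Omega_A)_K)$ and hence is everything. This gives the same conclusion.

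\textbf{Part (2).} Suppose $A$ is CL, with $\beta\in\End(A)$ lifting the $q$-power Frobenius.
\begin{itemize}
\item Crystalline Frobenius on $A_0$ is functorially induced by the absolute Frobenius, and $\beta$ lifts it. Hence $\Fc$ acts on $\Hdr(A)_K\simeq\Hcr(A_0)_K$ as $\beta^*$, suitably normalised for the $q$-power versus $p$-power convention.
\item The pullback $\beta^*$ preserves $H^0(A,\Omega_A)_K$, as noted before the statement. Since $\Fc$ is injective, $\Fc(H^0(A,\Omega_A)_K)=H^0(A,\Omega_A)_K$.
\item Theorem \ref{cor-intro-0} then gives $\bX_1(A)_K\simeq H^0(A,\Omega_A)_K$, so $\dim_K\bX_1(A)_K=g$.
\item The $\Fc$-stable subspace $H^0(A,\Omega_A)_K$, with filtration $H^0\supset H^0\supset 0$, is therefore a sub-object of $\mathrm{\Iso}(\Hcr(A)_K)$ containing $H^0(A,\Omega_A)_K$. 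Clearly it is the smallest such.
\item By Corollary \ref{cor-intro}, $\Phi$ identifies $\mathrm{\Iso}(\bH_\d(A)_K)$ with this smallest sub-object.
\item As a consistency check, $\Phi(\bH_\d(A)_K)=H^0(A,\Omega_A)_K=\Phi(\bXp(A)_K)$, so $\bH_\d(A)_K=\bXp(A)_K$ and the two filtrations agree.
\end{itemize}

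\textbf{Main obstacle.} The delicate step is the claim in (2) that $\Fc$ stabilises $H^0(A,\Omega_A)_K$ exactly. This needs two things:
\begin{itemize}
\item the identification of $\Fc$ with $\beta^*$ on de Rham cohomology via the Berthelot--Ogus comparison, which depends only on the special fibre plus a lift of Frobenius;
\item the bijectivity of $\Fc$ after inverting $p$.
\end{itemize}
Everything else follows by dimension counting.
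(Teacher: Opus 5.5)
Your proof is correct, but it argues on the crystalline side, whereas the paper's proof argues on the delta side.

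\textbf{Part (1).} The paper invokes Corollary \ref{sum}, whose dimension count $\dim_K(\bXp(A)_K+\mff^*\bXp(A)_K)=2g$ rests on Theorem \ref{order-1} and Lemma \ref{frob-bij-2}. This gives $\dim_K\Hd(A)_K=2g$, and the paper concludes with Corollary \ref{main-cor}. Your count $\dim_K\bigl(H^0(A,\Omega_A)_K+\Fc H^0(A,\Omega_A)_K\bigr)=2g$, obtained via Corollary \ref{cor-int}, is exactly the image of that count under $\Phi$. So this part is a reformulation rather than a new idea. Your explicit check that $\Phi(\bXp(A)_K)=H^0(A,\Omega_A)_K$ (so the filtered bijection is strict) is a point the paper leaves implicit.

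\textbf{Part (2).} Here the routes genuinely differ.
\begin{itemize}
\item The paper uses the delta-geometric characterization of CL, namely splitting of $0\to N^1\to J^1A\to A\to 0$, through Corollary \ref{semiab-CL}(1). This gives $\bXp(A)_K=\bX_1(A)_K$, hence $m_u=1$ and $\Hd(A)_K=\cF_0$ by Theorem \ref{delta-crys-main}, and $\dim_K\bX_1(A)_K=\dim_K\bXp(A)_K=g$. The $\Fc$-stability of $H^0(A,\Omega_A)_K$ then comes out as a consequence.
\item You take that stability as input, via $\Fc=\beta^*$, and obtain $\dim_K\bX_1(A)_K=g$ as an output of Corollary \ref{cor-int}.
\end{itemize}
Your route makes it transparent why the answer is the Hodge piece. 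It does import an external fact: functoriality of the Berthelot--Ogus comparison under the lift $\beta$. Equivalently, in the language of Section \ref{S3}, $\EE(F)=E(\beta)$, which preserves $V(A)$ and hence $\Lie(A)^\vee$. The paper's route needs only the jet-sequence characterization of CL and also yields the structural fact $m_u=1$.

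No renormalization is needed in your argument. With $R=W(\bF_q)$, $\phi=\mathrm{id}$ and $\Fc$ the $q$-power Frobenius, $\beta^*$ is literally $\Fc$, and it is linear.
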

\noindent The above theorem has been proved as Theorem \ref{ab-sur-thm}.  It is slightly unclear at the moment whether the condition $\bX_1(A)=\{0\}$ is also necessary for the comparison isomorphism to hold in part $(1).$ We would like to remark that an elliptic curve $A$ is non-CL if and only if $\bX_1(A)=\{0\}.$ Thus Theorem \ref{intro-main-3} generalizes the previous comparison results between the delta isocrystal and the first crystalline cohomology for elliptic curves in \cite{PS-2, GPS}. Moreover, combining Theorem \ref{intro-main-1} and Corollary \ref{cor-intro} with Buium's Theorem B in \cite{bui95}, we get the following comparison isomorphisms for ordinary abelian schemes in terms of their Serre--Tate parameters:
\begin{theorem}\label{intro-main-4} Assume $\pi=p\neq2.$ Let $A$ be a $\pi$-formal abelian scheme over $R$ having ordinary closed fiber $A_0$ over $k.$ Let $q_{ij}(A)\in 1+pR, 1\leq i,j\leq g$ be the Serre--Tate parameters. Then
\begin{itemize}
\item[(1)] Assume $\det((q_{ij}(A)-1)/p)\in R^{\times}.$ Then we have 
$$\mathrm{\Iso}(\bH_\d(A)_{K})\simeq \mathrm{\Iso}(\Hcr(A)_{K})$$ in the category of filtered $F$-isocrystals.\vspace{0.2cm}
\item[(2)] Assume $q_{ij}(A)=1$ for all $i,j$, then $A$ is CL and we have 
$$\mathrm{\Iso}(\bH_\d(A)_{K})\simeq \mathrm{\Iso}(H^0(A,\Omega_A)_{K})$$
\end{itemize}
 
\end{theorem}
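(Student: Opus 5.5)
This is Theorem \ref{intro-main-4}. The plan is to reduce both parts to Theorem \ref{intro-main-3}, using Buium's Theorem B in \cite{bui95} to translate the Serre--Tate conditions into statements about $\bX_1(A)$ or the CL property. Recall what Buium's Theorem B gives for an ordinary abelian scheme with $p\neq 2$. Order one delta characters are governed by the Serre--Tate parameters. More precisely, $\bX_1(A)$ is identified with the kernel (up to torsion and a twist by an invertible matrix) of the $g\times g$ matrix $\bigl((q_{ij}(A)-1)/p\bigr)$, or rather of its image in a suitable free $R$-module. So $\bX_1(A)\neq 0$ exactly when this matrix fails to be injective in the appropriate sense. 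In particular, if $\det((q_{ij}(A)-1)/p)\in R^\times$, then $\bX_1(A)=\{0\}$.

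For part (1), I would first check the torsion-freeness and $R$-module conventions, so that "$\bX_1(A)=\{0\}$" in Theorem \ref{intro-main-3}(1) matches Buium's conclusion. Then I apply Theorem \ref{intro-main-3}(1) directly to get $\mathrm{\Iso}(\bH_\d(A)_{K})\simeq \mathrm{\Iso}(\Hcr(A)_{K})$. As an alternative that avoids fine details of Theorem B, I could argue through Theorem \ref{cor-intro-0}. Invertibility of the Serre--Tate matrix should mean that $H^0(A,\Omega_A)_K\cap\Fc(H^0(A,\Omega_A)_K)=0$: the Hodge line is in maximally general position relative to the unit-root/slope decomposition of the ordinary $F$-crystal. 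That would give $\bX_1(A)_K=0$. Corollary \ref{cor-intro} together with a dimension count over the ordinary crystal then shows the smallest sub-object is everything.

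For part (2), when all $q_{ij}(A)=1$, $A$ is the Serre--Tate canonical lift. By Serre--Tate theory, the $q$-power Frobenius of $A_0$ (with $q=p$ since $\pi=p$) lifts to an endomorphism of $A$. This is the standard statement that the canonical lift is characterized by liftability of Frobenius, using the rigidity of the $p$-divisible group $\hat{\mathbb G}_m^g\times(\Q_p/\Z_p)^g$ split extension. Hence $A$ is CL, and Theorem \ref{intro-main-3}(2) yields $\mathrm{\Iso}(\bH_\d(A)_{K})\simeq \mathrm{\Iso}(H^0(A,\Omega_A)_{K})$.

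The main obstacle is the precise form of Buium's Theorem B. It is stated in terms of specific delta characters $\psi$ built from Serre--Tate coordinates, and one must confirm that the nondegeneracy of $((q_{ij}-1)/p)$ modulo $p$ forces $\bX_1(A)$ to vanish, not merely to be small. I would address this by writing an order one character restricted to the formal group $\hat{\mathbb G}_m^g$ in terms of the logarithms $\frac1p\log(t_i^p/\phi(t_i))$. I would then show that extending it across the Serre--Tate extension imposes the linear condition given by this matrix. This is the point where $p\neq 2$ is used, for convergence and integrality of the logarithm.
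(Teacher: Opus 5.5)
Your proposal is correct and follows the paper's route. Buium's Theorem B gives $\bX_1(A)=0$ (equivalently $\bXp(A)_K=\bX_2(A)_K$) in case (1) and $\bXp(A)_K=\bX_1(A)_K$, i.e.\ the CL property, in case (2), and Theorem \ref{ab-sur-thm} (via Corollary \ref{main-cor}) then finishes. Deriving CL in (2) directly from the Serre--Tate lifting of endomorphisms to the canonical lift is a harmless variant. One small slip: $\pi=p$ only means $R$ is unramified and does not force $q=p$, but this does not affect the argument, since the $q$-power Frobenius is an $\mathbb{F}_q$-endomorphism of $A_0$ and so still lifts.
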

\noindent  Let $r_\d(A)$ be the rank of the delta isocrystal as an $R$-module. Then we have $g\leq r_\d(A)\leq 2g$ and any number in the range can be realised as the {\it delta rank} of an abelian scheme (cf. Remark \ref{delta-rk}). Moreover, it is an open question if $r_\d(A)<2g$ then $A$ has always a CL isogeny factor or not.  It would be wishful to think whether the delta rank $r_\d(A)$ leads to any interesting stratification in the moduli space of abelian schemes in the spirit of \cite{bui93, bui95-b}.

 Finally we would like to remark that the condition $\det((q_{ij}(A)-1)/p)\in R^{\times}$ is satisfied for a generic choice of the Serre-Tate parameters. So this condition may be viewed as a ``sufficiently general". By comparing the Kodiara-Spencer classes, Buium proved that in this case we do have $\bX_1(A)=\{0\}$ (cf. page 335 in \cite{bui95}). Hence the assumption in part (1) of Theorem \ref{intro-main-3} and Theorem \ref{intro-main-4} is satisfied for sufficiently large classes of abelian schemes. \\

\subsection{Organization of the paper}
In section \ref{S1} we review basic foundation on delta geometry, introduce delta isocrystal and recall important structure theorems known previously. In section \ref{S2}, we build up necessary ingredients and prove Theorem \ref{intro-main-1}. In section \ref{S3}, we have established the explicit connection between the first jet and the universal vectorial extension of abelian schemes. In section \ref{S4}, we use all the results to prove Theorem \ref{intro-main-2} and derive Theorem \ref{cor-intro-0} as a corollary . Finally in section \ref{S5}, we prove the comparison results, namely Corollary \ref{cor-intro}, Theorems \ref{intro-main-3} and  \ref{intro-main-4}.

{\bf Acknowledgements.} The author would like to thank Netan Dogra and Arnab Saha for several enlightening discussions. He also thanks to  Lance Gurney and Rajat Kumar Mishra for helpful conversations related to this topic. This article is inspired by the theory of delta characters pioneered by Alexandru Buium \cite{bui95}, followed by the fundamental work relating delta characters with the $p$-adic cohomology theory by James Borger and Arnab Saha  \cite{BS_b}. The author is grateful to all of them for teaching him about delta geometry directly and indirectly. This research was supported by Royal Society research fellowship 
 grant RF$\backslash $ERE$\backslash $231161.
 \section{Notation}
\label{notation}
We collect here some notations fixed throughout the paper.
\begin{align*}
	p &= \text{a prime number} \\
	q &= \text{a power $>1$ of }p \\
	R &= \text{a $p$-adically complete discrete valuation ring} \\
	K &= \text{the fraction field of $R$} \\
	M_K &= K\otimes_R M, \text{ for any $R$-module $M$} \\
	\pi &= \text{a fixed uniformizer of } R \\
	k &= \text{the residue field of $R$, assumed to be finite of cardinality } q \\
	\phi &= \text{an endomorphism of $R$ satisfying $\phi(x) \equiv x^q \bmod (\pi) $, for all $x \in R$} \\
	S &= \Spf R\\
	\hG &= \text{$\pi$-formal additive group scheme over $\Spf R$}\\
	G &= \text{a smooth commutative $\pi$-formal group scheme over $\Spf R$}\\
A &= \text{a $\pi$-formal abelian scheme over $\Spf R$}\\
A_0 &= \text{$A\times_S \Spec k,$ the special fiber of $A$}
\end{align*}
%
%
By a \emph{$\pi$-formal scheme}, we will mean a $\pi$-adic formal scheme over $S$. A semiabelian scheme $G$ over $R$ is a smooth commutative $\pi$-formal group scheme such that $G$ is an extension of an abelian scheme $A$ by a torus $T$ over $R.$

\section{Preliminaries}\label{S1}
In this section, we briefly recall some basic preliminaries in delta geometry required for the rest of the article. For a detailed treatment, the interested readers are referred to the excellent exposition in \cite{BS_b}.
\subsection{$\pi$-derivation and delta rings}Let $\Ou$ be a Dedekind domain of characteristic $0$ and $\mfrak{p}$ a 
non-zero prime ideal with
$k$ as the residue field and $q$ be the cardinality of $k$ where $q$ is a
power of a prime $p$. Let $\pi$ be one of the uniformizers of $\mfrak{p}\Ou_{\mathfrak{p}}$.
For any $\Ou$-algebra $B$ and $B$-algebra $A$, we define a $\pi$-derivation 
$\d$ as a set-theoretic theoretic map $\d:B \map A$ that satisfies  for 
all $x,y \in B$,

$(i)~ \d(1) = 0=\d(0)$

$(ii)~ \d (x+y) = \d x + \d y + C_\pi(u(x),u(y)) $

$(iii)~ \d(xy) = u(x)^q \d y + u(y)^q \d x + \pi \d x \d y$

where $u:B \map A$ is the structure map and 
$$C_\pi(X,Y) = 
 \frac{X^q+Y^q -(X+Y)^q}{\pi}\in \Ou_p[X,Y].$$
Given such a $\pi$-derivation $\d$, define $\phi(x):= u(x)^q + \pi\d x$
which is then a ring homomorphism satisfying
$$
\phi(x) \equiv u(x)^q \bmod \mfrak{p}.
$$
We will call such a $\phi$ a {\it lift of Frobenius} with respect to $u$. $R$. A pair $(R,\d)$ is called a {\it $\d$-ring} where $R$ is a $\pi$-adically complete  $\Ou$-algebra and $\d$ is a $\pi$-derivation on it. The $\d$-rings were introduced by Joyal in \cite{joyal}, which were extensively studied by Buium in the name of $p$-derivations and led to the foundation of delta geometry. 

\subsection{Witt vectors ($\pi$-typical)} For each $n \geq 0$, let us define $B^{\phi^n}$ to be the $R$-algebra
with structure map $R \stk{\phi^n} {\map} R \stk{u}{\map} B$ and define the \emph{ghost rings}
to be the product $R$-algebras
$\Pi^n_{\phi} B = B \times B^{\phi} \times \cdots  \times B^{\phi^n}$ and
$\Pi_{\phi}^\infty B= B \times B^{\phi} \times \cdots$.
Then for all $n \geq 1$ there exists a \emph{restriction}, or \emph{truncation},
map $T_w:\Pi_{\phi}^nB \map \Pi_{\phi}^{n-1}B$ given by $T_w(w_0,\cdots,w_n)= (w_0,\cdots,w_{n-1})$.
We also have the left shift \emph{Frobenius} operators $F_w:\Pi_{\phi}^n B \map \Pi_{\phi}^{n-1} B$ given by
$F_w(w_0,\dots,w_n) = (w_1,\dots,w_n)$. Note that $T_w$ is an $R$-algebra morphism, but
$F_w$ lies over the Frobenius endomorphism $\phi$ of $R$.

Now, as sets define
        \begin{equation}
        \label{eq-witt-coord}
        W_n(B)=B^{n+1}, 
        \end{equation}
and define the set map $w:W_n(B) \map \Pi_{\phi}^n B$  by $w(x_0,\dots,x_n)= (w_0,\dots,w_n)$ where
        \begin{equation}
        \label{eq-witt-poly}
        w_i = x_0^{q^i}+ \pi x_1^{q^{i-1}}+ \cdots + \pi^i x_i
        \end{equation}
are the \emph{Witt polynomials}.
The map $w$ is known as the {\it ghost} map. We can then define the ring $W_n(B)$, the ring
of truncated $\pi$-typical Witt vectors, by the following well-known theorem:

\begin{theorem}
\label{wittdef}
For each $n \geq 0$, there exists a unique functorial $R$-algebra structure on $W_n(B)$ such that
$w$ becomes a natural transformation of functors of $R$-algebras.
\end{theorem}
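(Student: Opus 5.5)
The plan is to prove the statement first in the universal case and then deduce it for arbitrary $B$ by functoriality. Take the polynomial ring $B_{\mathrm{univ}}=R[x_0,\dots,x_n,y_0,\dots,y_n]$ and equip it with the lift of Frobenius $\phi$ that extends $\phi$ on $R$ and sends each variable to its $q$-th power. The ring $B_{\mathrm{univ}}$ is $\pi$-torsion free, so the ghost map $w:W_n(B_{\mathrm{univ}})\to\Pi^n_\phi B_{\mathrm{univ}}$ is injective: one solves $w_i=x_0^{q^i}+\cdots+\pi^i x_i$ for $x_i$ after inverting $\pi$. The universal sum and product are then forced to be the unique vectors $S,P\in B_{\mathrm{univ}}^{n+1}$ with $w(S)=w(x)+w(y)$ and $w(P)=w(x)w(y)$. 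Similarly, negation, zero, one, and the $R$-algebra structure map $R\to W_n$ are determined by the corresponding ghost components; for the structure map the ghost image of $r$ should be $(r,\phi(r),\dots,\phi^n(r))$.

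The main obstacle is integrality: showing that the $S_i$ and $P_i$, which a priori lie in $B_{\mathrm{univ}}[1/\pi]$, actually lie in $B_{\mathrm{univ}}$. I will handle this with the Dwork lemma. Let $A$ be $\pi$-torsion free with a lift of Frobenius $\phi$. Then a ghost vector $(w_0,\dots,w_n)$ is in the image of $W_n(A)$ if and only if $w_i\equiv\phi(w_{i-1}) \bmod \pi^i$ for all $i$. I will prove this by induction on $i$. The key input is that $a\equiv b \bmod \pi^j$ implies $a^q\equiv b^q \bmod \pi^{j+1}$, which holds because $q\in\pi\Ou$. From this one gets $\phi(x_0^{q^{i-1}}+\cdots)\equiv x_0^{q^i}+\cdots \bmod \pi^i$, so the next Witt coordinate is integral exactly when the congruence holds. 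The congruence conditions are preserved by componentwise sum and product, because $\phi$ is a ring map. They also hold for $(r,\phi(r),\dots)$, using $\phi^i(r)\equiv\phi(\phi^{i-1}(r))$. This gives integral polynomials $S_i,P_i$ with coefficients in $R$. It also shows that the ring axioms hold on $W_n(B_{\mathrm{univ}})$ and on $W_n(R)$, since they hold after the injective ghost map.

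For a general $R$-algebra $B$, define the operations by substituting into these universal polynomials. The ring and algebra identities are polynomial identities that hold universally, so they hold in $B$. Naturality of $w$ is immediate because both $w$ and the operations are given by polynomials with coefficients in $R$. One point needs care: on $B^{\phi^i}$ the ghost polynomials must be compatible with the twisted structure. This follows because the Witt polynomials have integer coefficients in $\pi$, so the twist only enters through the structure map $r\mapsto(\phi^i(r))_i$, which is already built into the construction.

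Uniqueness follows by evaluating at the universal pair. A functorial structure making $w$ natural must agree with the constructed one on $B_{\mathrm{univ}}$, where $w$ is injective. Any pair $(x,y)$ in any $B$ is the image of the universal pair under an $R$-algebra map $B_{\mathrm{univ}}\to B$. Naturality therefore forces agreement everywhere, and the same argument with $R$ itself handles the structure map.
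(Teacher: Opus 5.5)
The paper gives no proof of this statement. It quotes it as a well-known fact from the theory of $\pi$-typical Witt vectors (Borger, Buium), so the relevant comparison is with the standard proof. Your argument is exactly that standard proof: Dwork's lemma on a $\pi$-torsion-free ring with a Frobenius lift gives integral universal polynomials, specialization gives existence, and injectivity of $w$ on the universal ring plus functoriality gives uniqueness. The argument is correct, but three points should be tightened.

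First, the congruence $\phi(w_{i-1}(x))\equiv w_i(x)\bmod \pi^i$ silently uses $\phi(\pi)=\pi$, and you should state this hypothesis.
\begin{itemize}
\item It holds in the paper's setting, because $\phi$ comes from a $\pi$-derivation on an $\Ou$-algebra with $\pi\in\Ou$. It also holds trivially for $R=W(\mathbb{F}_q)$ with $\phi=\mathrm{id}$.
\item It is genuinely needed. Take $R=\mathbb{Z}_p[\sqrt{p}]$ with $p$ odd, $\pi=\sqrt{p}$, and the Frobenius lift $\phi(\pi)=-\pi$. Then $(\pi,\phi(\pi),\phi^2(\pi))=(\pi,-\pi,\pi)$ is not in the image of $w$ on $W_2(R)$. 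Solving the Witt polynomials forces $\pi^2x_2\equiv 2\pi \bmod \pi^2$, which is impossible, so no structure map $R\to W_2(R)$ exists.
\end{itemize}

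Second, the step $a\equiv b\bmod\pi^j\Rightarrow a^q\equiv b^q\bmod \pi^{j+1}$ (for $j\geq 1$) is not justified by $q\in\pi\Ou$ alone. It needs $p\mid\binom{q}{k}$ for all $0<k<q$, which is true because $q$ is a power of $p$.

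Third, associativity and distributivity involve three Witt vectors. So the universal identities must be verified in $W_n(R[x,y,z])$ rather than in $W_n(B_{\mathrm{univ}})$, which has only two sets of variables. There only injectivity of $w$ is needed, since integrality of $S$ and $P$ is already established by your two-variable argument.
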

The Witt vectors admit two natural ring maps: the truncation map $T: W_n(B)
\map W_{n-1}(B)$ given by 
$$
T(a_0,\dots, a_n) = (a_0,\dots, a_{n-1}),
$$
and the Frobenius map $F : W_n(B) \map W_{n-1}(B)$ given by 
$$
F(a_0,\dots, a_n) = (a_0^q + \pi a_1,\dots ).
$$
It is the unique map which is functorial in $B$ and makes the
following diagram commutative
        \begin{equation}
        \xymatrix{
        W_n(B) \ar[r]^w \ar[d]_F & \Pi^n_{\phi} B \ar[d]^{F_w} \\
        W_{n-1}(B) \ar[r]_-w & \Pi_{\phi}^{n-1} B^n
        } \label{F}
        \end{equation}
As with the ghost components, $T$ is an $R$-algebra map but $F$ lies over the Frobenius endomorphism $\phi$
of $R$.

\subsection{Prolongations and arithmetic jets} 
Let $X$ and $Y$ be $\pi$-formal schemes over $S$. We say a pair $(u,\d)$ is a 
{\it prolongation} and we write $Y\stk{(u,\d)}{\map} X$, if $u:Y \map X$ is a 
map of $\pi$-formal $S$-schemes and $\d: \Ou_X \map u_*\Ou_Y$ is a 
$\pi$-derivation making the following diagram commute:
$$\xymatrix{
R \ar[r] & u_*\Ou_Y \\
R \ar[u]^\d \ar[r] &\Ou_X \ar[u]_\d
}
$$Following Buium~\cite{bui00} (page 103), a {\it prolongation sequence} is a chain
of prolongations
	$$
	\xymatrix{
	S & T^0 \ar_-{(u,\d)}[l] & T^1 \ar_-{(u,\d)}[l] & \cdots\ar_-{(u,\d)}[l]},
	$$
where each $T^n$ is a $\pi$-formal scheme over $S$, satisfying 
$$u^* \circ \d = \d \circ u^*
$$
and $u^*$ is the pull-back morphism on the structure sheaves induced by $u$.
We will often use the 
notation $T^*$ or $\{T_n\}_{n \geq 0}$.
Note that if the  $T^n$ are flat over $S$, then having a 
$\pi$-derivation $\d$ is equivalent to having lifts of Frobenius $\phi:T^{n+1}\to T^n$.

\noindent Prolongation sequences form a category $\mcal{C}_{S^*}$
This category has a final object $S^*$ given by $S^n=\Spf R$ for all $n$, where each $u$ is the identity and
each $\d$ is the given $\pi$-derivation on $R$.


For any $\pi$-formal scheme $X$ over $S$, for all $n \geq 0$ we define the 
$n$-th jet space $J^nX$ (relative to $S$) as a functor by
	$$
	J^nX (C) :=  X(W_n(C)) = \Hom_S(\Spf(W_n(C)),X),
	$$
for any $R$-algebra $C$.
By \cite{bps,bor11b}, the functor $J^nX$ is representable
by a $\pi$-formal $S$-scheme, which we will continue to denote as $J^nX$. 
This is precisely the arithmetic jet space constructed by Buium in 
\cite{bui95}. Also, do note that if $G$ is a 
group object, then $J^nG$ is also a group object by its definition.


The truncation and Frobenius  maps induce morphisms $u$ (projection map)
and $\phi$ (Frobenius map) respectively from
$J^nX$ to $J^{n-1}X$. The system $J^*X =\{J^nX\}_{n=0}^\infty$ is known
as the {\it canonical prolongation sequence} as in proposition 1.1 in
\cite{bui00}. Then $J^*X$ satisfies the following 
universal property: 
\begin{theorem}[Proposition 1.1 in 
\cite{bui00}] \label{Uni-jet}For any $T^* \in \mcal{C}_{S^*}$ and $X$ a $\pi$-formal
scheme over 
$S^0$, we have
\begin{equation*}	
\label{canprouniv}
	\Hom(T^0,X) = \Hom_{\mcal{C}_{S^*}}(T^*, J^*X)
\end{equation*}	
\end{theorem}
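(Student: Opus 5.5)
The plan is to build the bijection directly: restriction to level $0$ in one direction, and an explicit construction of the higher levels through Witt vectors in the other. The main tool is the adjunction-type property of Witt vectors with respect to $\pi$-derivations. Both sides of the claimed identity are Zariski-local on $T^0$, and on $X$ after pulling back. Morphisms of $\pi$-formal schemes and of prolongation sequences also glue. So I would first reduce to the case where every $T^n=\Spf C_n$ is affine, with the maps $u\colon C_{n-1}\to C_n$ and $\pi$-derivations $\d\colon C_{n-1}\to C_n$ compatible as in the definition of a prolongation sequence.

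The key construction is the following. For each $n$ I define a set map
$$\exp_\d\colon C_0\longrightarrow W_n(C_n),$$
sending $x$ to the Witt vector $(u^n x, u^{n-1}\d x,\dots,\d^n x)$, with the $\d$'s applied along the chain. I then prove three facts about it.
\begin{itemize}
\item[(a)] $\exp_\d$ is an $R$-algebra homomorphism, using the given structure $R\to W_n(C_n)$ induced by $\d$ on $R$.
\item[(b)] $T\circ\exp_\d=u\circ\exp_\d$, where $T$ is the Witt truncation.
\item[(c)] $F\circ\exp_\d=\exp_\d\circ\,\d$-shift, i.e. the Witt Frobenius $F$ corresponds to the prolongation structure $(u,\d)\colon T^n\to T^{n-1}$.
\end{itemize}
When all $C_n$ are $\pi$-torsion free, the ghost components of $\exp_\d(x)$ are $(u^nx,\, u^{n-1}\phi x,\dots,\phi^n x)$. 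These are visibly additive and multiplicative, and (b) and (c) become the shift identities for the ghost maps $T_w$ and $F_w$ in diagram \eqref{F}. In general, (a)–(c) are identities between universal polynomials in the $\d$-iterates. These polynomials have coefficients in $\Ou_{\mfrak{p}}$ and are determined by Theorem \ref{wittdef}. So it suffices to verify them on the free (flat) $\pi$-derivation algebra on two generators, where the ghost argument applies.

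Given $f\colon T^0\to X$, I define $f_n\colon T^n\to J^nX$ via $J^nX(C_n)=X(W_n(C_n))$ as the composite
$$\Spf W_n(C_n)\xrightarrow{\ \Spf(\exp_\d)\ } \Spf C_0 = T^0 \xrightarrow{\ f\ } X.$$
Properties (b) and (c) say precisely that the $f_n$ commute with the projections $u$ and the Frobenius maps $\phi$ on $J^*X$. Hence the $f_n$ form a morphism $T^*\to J^*X$ in $\mcal{C}_{S^*}$, and its level-$0$ component is $f$ because $W_0=\mathrm{id}$.

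For injectivity of restriction to level $0$, I take a morphism $g_*\colon T^*\to J^*X$ of prolongation sequences. I would show by induction on $n$ that $g_n$ is determined by $g_{n-1}$: locally the coordinate ring of $J^nX$ is generated $\pi$-adically over that of $J^{n-1}X$ by $\d$ of functions pulled back from $J^{n-1}X$. A morphism of prolongations must intertwine these $\d$'s, so these generators have prescribed images. This local generation statement is the main obstacle, since it needs an explicit local description of $J^nX$. I would obtain it from the representability results cited (\cite{bps,bor11b}): locally, for $X$ closed in a $\pi$-formal affine space over $\Spf R$, $J^nX$ is the $\pi$-completion of $\Ou(X)[\d x,\dots,\d^n x]$ modulo the $\d$-prolonged ideal.

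Finally, with generation in hand, the two constructions are mutually inverse. The construction above followed by restriction to level $0$ is the identity by definition. In the other direction, $g_*$ and the sequence built from $g_0$ agree at level $0$, so they agree at all levels by uniqueness.
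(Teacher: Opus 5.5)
The paper does not prove this statement; it quotes Buium, who constructs $J^nX$ locally as $\mathrm{Spf}\,R\langle x,x',\dots,x^{(n)}\rangle/(I,\delta I,\dots,\delta^nI)$. In that presentation the universal property is essentially built in: one sends $x^{(j)}\mapsto\delta^ja$.

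Your functor-of-points route is a legitimate alternative, but its central construction is wrong as stated. For $n\ge 2$ the Witt vector $(u^nx,u^{n-1}\delta x,\dots,\delta^nx)$ does not have ghost components $(u^nx,\dots,\phi^nx)$. Its second ghost component is $(u^2x)^{q^2}+\pi(u\delta x)^q+\pi^2\delta^2x$, whereas $\phi u=u\phi$ and $\delta u=u\delta$ give
\[
\phi^2x=\bigl((u^2x)^q+\pi\,u\delta x\bigr)^q+\pi(u\delta x)^q+\pi^2\delta^2x .
\]
So your $\exp_\delta$ is not a ring homomorphism, and (a) fails. Concretely, take the final prolongation sequence with $R=\mathbb{Z}_3$, $\pi=q=3$, $\phi=\mathrm{id}$. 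Your map sends $1$ to $(1,0,0)$ and $2$ to $(2,\delta 2,\delta^2 2)=(2,-2,2)$, but $(1,0,0)+(1,0,0)=(2,-2,-54)$ in $W_2(\mathbb{Z}_3)$.

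The underlying error is that you identified Buium's $\delta$-coordinates with the Witt coordinates of $J^nX(C)=X(W_n(C))$. Your uniqueness argument is phrased in the $\delta$-coordinates, and there $x^{(i)}\mapsto\delta^ia$ is correct. The two coordinate systems differ by universal correction polynomials; for $n=2$ the last Witt component must be
\[
\delta^2x+\pi^{-2}\Bigl(\bigl((u^2x)^q+\pi\,u\delta x\bigr)^q-(u^2x)^{q^2}\Bigr),
\]
a correction that is integral (since $\pi\mid q$) but generally nonzero. To repair this, define $\exp_\delta$ through the Witt components of the element with ghost vector $(x,\phi x,\dots,\phi^nx)$ in the free prolongation sequence $R\langle x,x',x'',\dots\rangle$. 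These components exist integrally by Dwork's lemma, because $w_i=\phi(w_{i-1})$ holds exactly there; then specialize. With that definition your ghost-component argument for (a) and (b) goes through.

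There is a second, independent gap in (c). You only check that the $f_n$ commute with $u$ and $\phi$, but morphisms in $\mathcal{C}_{S^*}$ must commute with the $\pi$-derivations. When $T^*$ has $\pi$-torsion, compatibility with $u$ and $\phi$ only gives $\pi\,(f_n^*\delta b-\delta f_{n-1}^*b)=0$. You need two things:
\begin{enumerate}
\item the $\delta$ of $J^*X$ is induced by the comultiplication $\Delta\colon W_n(B)\to W_{n-1}(W_1(B))$, since a $\pi$-derivation over $u$ is the same as a ring map $(u,\delta)$ into $W_1$;
\item the identity $\Delta\circ\exp_\delta=W_{n-1}\bigl((u,\delta)\bigr)\circ\exp_\delta$ as maps $C_0\to W_{n-1}(W_1(C_n))$.
\end{enumerate}
The identity in (2) is again universal and can be checked on ghost components in the free case. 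Combined with the injectivity of $\Delta$, it would also give you uniqueness without the local presentation.

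Finally, localizing on $T^0$ does not make the higher $T^n$ affine, because the maps $u$ need not be affine. Work instead with chains of affine opens $V_n\subset u^{-1}(V_{n-1})$; that is all the construction actually uses.
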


\subsection{Delta characters of group schemes}\label{delta-char}
\color{black}
If $G$ is a smooth $\pi$-formal group scheme over $S$, the projection map
$u: J^nG \map G$ is a surjection of $\pi$-formal group schemes 
\cite{bor11b, bui00}. Let $N^nG$ be the kernel of the morphism $u$ and
hence we have the following canonical short exact sequence of $\pi$-formal
group schemes
\begin{align}\label{short1}
0 \map N^nG \stk{\iota}{\map} J^nG \stk{u}{\map} G \map 0.
\end{align} 
Combining Theorem $4.3$ in  \cite{BS_b} and Theorem $1.2$ in \cite{PS-2}, 
one shows that for all $n$ we have a canonical isomorphism
\begin{align}
N^nG \simeq J^{n-1}(N^1G),
\end{align}
and the associated Frobenius morphism for the canonical prolongation 
sequence $\{N^*G\}_{n=0}^\infty$, denoted $\fra: N^nG \map N^{n-1}G$ 
satisfies
\begin{align}
\label{phi-fra}
\phi^{\circ 2} \circ \iota = \phi \circ \iota \circ \fra.
\end{align}
Set $\bX_n(G):= \Hom(J^nG, \hG)$ to be the $R$-module of additive 
characters of the $n$-th jet space $J^nG$. Then $\bX_n(G)$ is called the
module of {\it delta characters} of $G$ with order $\leq n$.
Let $\bX_\infty(G)$ be the direct limit of $\bX_n(G)$ with respect to 
pulling back by $u^*$. Also pulling back a delta character $\Theta$ by $\phi^*$ 
endows $\bX_\infty(G)$ with a semilinear action of $\phi^*$ satisfying
$\phi^* \cdot r = \phi(r) \cdot \phi^*$ for all $r \in R$, see 
$(3.11)$ in \cite{BS_b}.

Let $G$ be a semiabelian scheme. For every $n$, we have the following short exact sequence
\begin{equation}
\label{se}
0 \map N^nG \map J^nG \map G \map 0
\end{equation}
Applying $\Hom(-, \hG)$ to the above short exact sequence gives us
\begin{equation}
\label{sel}
0 \map \bX_n (G)/\bX_0(G) \map \Hom(N^nG,\hG) \stk{\partial}{\map} \Ext(G,\hG) 
\end{equation}
We have for a semiabelian scheme $G$ the extension group
$\Ext(G,\hG)$ is a finite free $R$-module. 

Let $\bI_n(G) := \mb{image}(\partial)$. 
Note that since for all $n$, there are maps $\Hom(N^nG,\hG) \stk{u^*}{\inj} \Hom(N^{n+1}G,\hG)$,
we have $\bI_n(G) \subset \bI_{n+1}(G)$. Define 
\begin{equation}
\label{bigI}
\bI(G):= \varinjlim \bI_n(G)
\end{equation}
and 
$$
h_i= \rk \bI_i(G) - \rk \bI_{i-1}(G)
$$
for all $i \geq 1$.
We define the {\it upper splitting number} to be the smallest number 
$\mup\geq 1$ such that $h_n=0$ for all $n \geq \mup$.  
Note that $\mup$ exists since $\Ext(G,\hG)$ is a finite free $R$-module for a semiabelian scheme $G$ over $R,$ and  $$\bI_0(G) \subset \bI_{1}(G) \subset \cdots \subset \Ext(G,\hG).$$

\subsection{Delta isocrystal} \label{delta-iso}Let $G$ be a smooth commutative group scheme over $R$ and $\bx_0=(x_{01},\dots,x_{0g})$ be an \'{e}tale coordinate system of $G$ around the identity section. 
Recall from section $4.1$ in \cite{BS_b} that the \'etale coordinates $\bx_0$ around the identity section of $G$, gives  local Witt coordinates $(\bx_0,\ldots\bx_n)$ on $J^nG$ and correspondingly $(\bx_1,\ldots\bx_n)$ on $N^nG.$ 
This gives a basis of $\Lie(G),  \Lie(J^nG)$ and $\Lie (N^nG)$ respectively, still denoted by the same.
Let us fix such a coordinate system $\bx_0$ on $G.$
\begin{theorem}[Theorem $4.4$ in \cite{BS_b}]
\label{comfact}
Let $G$ be a smooth group scheme over $S$. Then
the morphism of group schemes $(\iota \circ \mfrak{f} - 
\phi \circ \iota) : N^nG \map J^{n-1}G$ uniquely factors through  $N^1G$ as
$$\xymatrix{
N^nG \ar[rr]^-{\iota \circ \mfrak{f} -\phi \circ \iota}  
\ar[d]_u & & J^{n-1}G \\
N^1G \ar[rru]_\Delta & &
}$$
\end{theorem}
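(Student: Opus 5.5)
The factorization through $u\colon N^nG\to N^1G$ is forced to be unique once we know $u$ is an epimorphism of $\pi$-formal group schemes. Existence amounts to showing that $\iota\circ\mfrak{f}-\phi\circ\iota$ kills the kernel $K_n:=\ker(N^nG\xrightarrow{u}N^1G)$. So the plan has three parts: (a) show $u$ is faithfully flat, (b) identify $K_n$, and (c) prove the two morphisms $\iota\circ\mfrak{f}$ and $\phi\circ\iota$ agree on $K_n$.

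For (a), I would use the local Witt coordinates. In the coordinates $(\bx_1,\dots,\bx_n)$ on $N^nG$ and $\bx_1$ on $N^1G$, the map $u$ is just the projection $(\bx_1,\dots,\bx_n)\mapsto\bx_1$. Hence $u$ is smooth and surjective, so faithfully flat. Then a homomorphism $N^nG\to J^{n-1}G$ vanishing on $K_n$ descends uniquely to $N^1G=N^nG/K_n$, by fppf descent for quotients of formal group schemes. For (b), on points,
$$K_n(C)=\ker\bigl(G(W_n(C))\to G(W_1(C))\bigr),$$
and in Witt coordinates it is cut out by $\bx_0=\bx_1=0$.

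For (c) I would argue on the universal point. Since $K_n$ is flat over $R$ (it is an affine space in the coordinates $\bx_2,\dots,\bx_n$ near the identity), it suffices to check the identity for points with values in $\pi$-torsion-free $C$. For such $C$ the ghost map $w\colon W_{n-1}(C)\to\Pi_\phi^{n-1}C$ is injective, so it is enough to compare ghost components after applying $G$. Here I would make two things explicit.
\begin{itemize}
\item First, $\phi\circ\iota$ is induced by $F\colon W_n(C)\to W_{n-1}(C)$, which shifts ghost components: $(w_0,\dots,w_n)\mapsto(w_1,\dots,w_n)$.
\item Second, $\mfrak{f}$ comes from the identification $N^nG\simeq J^{n-1}(N^1G)$. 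Concretely, it is the Frobenius of the outer Witt vector in $W_{n-1}(W_1(C))$, transported through the comparison map $W_n(C)\to W_{n-1}(W_1(C))$ (the Cartier diagonal restricted to the relevant kernels).
\end{itemize}
Composing with $\iota$ and tracking ghost components, the two maps should differ only through the ghost component governed by $W_1$. On $K_n$ this component is trivial because $\bx_0=\bx_1=0$, so the two maps should agree on $K_n$. As a consistency check, relation \eqref{phi-fra}, $\phi\circ\phi\circ\iota=\phi\circ\iota\circ\mfrak{f}$, already says the difference dies after one more Frobenius. This is in line with the discrepancy living only in the lowest ghost coordinate.

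The main obstacle is the second bullet: writing $\mfrak{f}$ honestly in ghost coordinates. This requires making the isomorphism $N^nG\simeq J^{n-1}(N^1G)$ explicit at the level of Witt vector rings and checking that it intertwines the two Frobenius structures up to the lowest ghost component. I would first treat $G=\hG$, where everything is linear and the ghost computation is transparent. Then I would pass to general smooth $G$ using étale coordinates around the identity and functoriality of the jet constructions in $G$; the group law only enters through the homomorphism property, which both maps satisfy.
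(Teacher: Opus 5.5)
The paper gives no proof of this statement; it is imported from Borger--Saha (their Theorem 4.4), so your proposal has to stand on its own. Steps (a) and (b) are fine. In Witt coordinates, $u\colon N^nG\to N^1G$ is the projection, so it has a section as formal schemes. This gives uniqueness and reduces existence to showing $\iota\circ\mfrak{f}=\phi\circ\iota$ on $K_n=\ker u$.

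\textbf{Gap in step (c).} Step (c) is the whole content of the theorem, and it is only asserted (``should differ only through\dots'', ``should agree''). Moreover, the one concrete description of $\mfrak{f}$ you offer is wrong. The Cartier diagonal $\Delta\colon W_n(C)\to W_{n-1}(W_1(C))$ satisfies $w_{ij}(\Delta x)=w_{i+j}(x)$ on ghost components. Hence $\Delta x\in W_{n-1}(\ker(W_1(C)\to C))$ iff $w_0(x)=\dots=w_{n-1}(x)=0$, i.e.\ iff $x\in\ker(W_n(C)\to W_{n-1}(C))$, not $\ker(W_n(C)\to C)$. For example, $x=V(a)$ with $a_0\neq0$ has $w_1(x)=\pi a_0\neq 0$. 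The same happens for $W_n(C)\to W_1(W_{n-1}(C))$, which is the ring actually relevant to $J^{n-1}(N^1G)(C)=N^1G(W_{n-1}(C))$. So the diagonal does not carry $N^nG$ into $J^{n-1}(N^1G)$, and your ghost-coordinate computation of $\mfrak{f}$ has no valid starting point. Your passage from $\hG$ to general $G$ via \'etale coordinates is also unjustified as stated, since \'etale coordinates are not group homomorphisms.

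\textbf{How to close it.} The gap closes using the relation you treated only as a consistency check. Let $n\geq 2$ and $D=\iota\circ\mfrak{f}-\phi\circ\iota\colon N^nG\to J^{n-1}G$.
\begin{itemize}
\item Since $\phi$ is a homomorphism, \eqref{phi-fra} gives $\phi\circ D=0$.
\item Since $u\circ\iota\circ\mfrak{f}$ is trivial and truncation commutes with Frobenius, $u\circ D=-\phi\circ\iota\circ u$. Here, on the right, $u\colon N^nG\to N^1G$ and $\phi\circ\iota\colon N^1G\to G$.
\item For $\pi$-torsion-free $C$, the ring map $W_{n-1}(C)\to C\times W_{n-2}(C)$, $a\mapsto(a_0,F(a))$, is injective, because its ghost components recover $w_0,\dots,w_{n-1}$. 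Since $G$ is separated, a point of $J^{n-1}G(C)$ is therefore determined by its images under $u$ and $\phi$.
\end{itemize}
Hence $D(x)=0$ for every $x\in K_n(C)$. The universal point of $K_n\simeq\Spf R\{\bx_2,\dots,\bx_n\}$, which is $\pi$-torsion-free, then gives $D|_{K_n}=0$, and (a) finishes.

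Concretely, in Witt coordinates on the kernel ideal $VW_{n-1}(C)$, the map $\phi\circ\iota$ is $Va\mapsto FVa=\pi a$, and \eqref{phi-fra} forces $\mfrak{f}$ to be $Va\mapsto VFa$. On $V^2W_{n-2}(C)$, which corresponds to $K_n$, both send $V^2b$ to $\pi Vb$. This is the rigorous form of your heuristic.
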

For any delta character $\Theta \in \bX_n(G)$, consider the differential
map $$D\Theta : T_0(J^nG) \map T_0(\hG)$$ between the tangent spaces at the 
identity sections. With respect to the local Witt coordinates chosen,  let the differential map be given by
\begin{align}
\label{differential}
D\Theta = (A_0, \cdots ,A_n)
\end{align}
where $A_j \in \mb{Mat}_{1\times g} (R)$.

\begin{proposition}
\label{diff}
Let $\Theta$ be a character in $\bX_n(G)$.
\begin{enumerate}
        \item We have
                $$
               \mfrak{f}^*(\iota^*\Theta) = \iota^*\phi^*\Theta +
\gamma_{\Theta}. \bPsi_1,
                $$
                where $\gamma_{\Theta}=\pi A_0$. 
        \item For $n\geq 2$, we have
                $$
               (\mfrak{f}^{n-1})^* \iota^*\phi^*\Theta =\iota^*(\phi^{\circ n})^*\Theta.
                $$
\end{enumerate}
\end{proposition}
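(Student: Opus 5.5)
The plan is to derive both parts directly from Theorem \ref{comfact} and the relation \eqref{phi-fra}, by pulling characters back along the morphisms of group schemes involved. I will take $\bPsi_1$ to be the canonical additive character of $N^1G$, given by $\bPsi_1 = \bx_1 + (\text{higher order terms})$ in the local Witt coordinates. I am assuming this normalization: $\bPsi_1$ is the $1\times g$ vector of characters whose differential at the identity is the identity in the coordinates $\bx_1$.

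\emph{Part (1).} Write $\Theta\in\bX_n(G)$ and pull it back along the identity of Theorem \ref{comfact}, $\iota\circ\mfrak f-\phi\circ\iota=\Delta\circ u$ on $N^{n+1}G\to J^nG$. Since all maps are homomorphisms of commutative group schemes and $\Theta$ is additive, pullback is additive. Hence
$$\mfrak f^*\iota^*\Theta-\iota^*\phi^*\Theta = u^*(\Delta^*\Theta),$$
and it suffices to show $\Delta^*\Theta=\pi A_0\cdot\bPsi_1$ as characters of $N^1G$. Here I use that $N^1G\simeq\hG^{g}$ via $\bPsi_1$, and that $R$ is flat of characteristic $0$. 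An additive character of $\hG^g$ is then determined by its differential at the identity, since additive power series over a torsion-free ring are linear. So it is enough to compare differentials:
$$D(\Delta^*\Theta)=D\Theta\circ D\Delta = (A_0,\dots,A_n)\circ D\Delta.$$

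\emph{Computing $D\Delta$.} Restrict to the image of $N^1G$. In Witt coordinates a point of $N^{n+1}G$ lifting $\bx_1\in N^1G$ has the form $(0,\bx_1,\ast,\dots)$. The Frobenius $F(a_0,a_1,\dots)=(a_0^q+\pi a_1,\dots)$ is linear in the tangent directions only through the $\pi a_1$ term, because $a_0^q$ has vanishing differential. On the $\bx_1$-direction, $\phi\circ\iota$ therefore contributes $\pi\bx_1$ to the $0$-th Witt slot. In contrast, $\iota\circ\mfrak f$ lands in $N^nG$, so its $0$-th slot is $0$. The contributions in the higher slots are controlled by the fact, built into Theorem \ref{comfact}, that the difference factors through $u$ and so cannot depend on the coordinates $\bx_2,\bx_3,\dots$. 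Carrying this out, $D\Delta(\bx_1)=(\pm\pi\bx_1,0,\dots,0)$, with the sign fixed by the convention in Theorem \ref{comfact}. Consequently $D(\Delta^*\Theta)=\pi A_0$ up to that sign, which gives $\gamma_\Theta=\pi A_0$.

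I expect this tangent computation to be the main obstacle. One must check carefully, using the Witt addition and Frobenius polynomials modulo squares of the tangent coordinates, that the higher slots of $\iota\circ\mfrak f-\phi\circ\iota$ cancel at first order. One must also fix the sign convention consistently with the statement.

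\emph{Part (2).} First I will prove by induction on $k\ge1$ that $\phi\circ\iota\circ\mfrak f^{k}=\phi^{\circ(k+1)}\circ\iota$, as maps with the appropriate source $N^mG$. The case $k=1$ is \eqref{phi-fra}. For the inductive step,
$$\phi\circ\iota\circ\mfrak f^{k}=(\phi\circ\iota\circ\mfrak f)\circ\mfrak f^{k-1}=\phi^{\circ2}\circ\iota\circ\mfrak f^{k-1}=\phi\circ\phi^{\circ k}\circ\iota.$$
Taking $k=n-1$ and pulling $\Theta$ back along both sides gives $(\mfrak f^{n-1})^*\iota^*\phi^*\Theta=\iota^*(\phi^{\circ n})^*\Theta$. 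This step is formal once the sources and targets of the iterated $\mfrak f$ are matched.
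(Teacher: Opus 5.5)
Your part (2) is correct. The induction $\phi\circ\iota\circ\mfrak{f}^{k}=\phi^{\circ(k+1)}\circ\iota$ from \eqref{phi-fra}, followed by pulling back $\Theta$, is all that is needed. For comparison, the paper gives no argument of its own: it cites Proposition 6.3 of Borger--Saha and remarks that the sign of $\gamma_\Theta$ has been flipped.

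Part (1) has a genuine gap, at exactly the step you flag. The reduction $\mfrak{f}^*\iota^*\Theta-\iota^*\phi^*\Theta=u^*\Delta^*\Theta$ is fine, and so is comparing differentials. You do not actually need $N^1G\simeq\hG^g$ via $\bPsi_1$, which in general holds only after inverting $\pi$; it suffices that $N^1G$ has global coordinates $\bx_1$ and $R$ is torsion free. The problem is your justification for the vanishing of slots $1,\dots,n$ of $D\Delta$. The factorization through $u$ only says that $D\Delta$ does not involve $\bx_2,\bx_3,\dots$; it says nothing about the coefficient of $\bx_1$ in the higher slots. Those coefficients come from $D(\iota\circ\mfrak{f})$, which depends on the identification $N^{n+1}G\simeq J^n(N^1G)$, and you never compute it.

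The missing idea is to use \eqref{phi-fra} here as well. It gives $\phi\circ(\iota\circ\mfrak{f}-\phi\circ\iota)=0$, hence $\phi\circ\Delta=0$ and $D\phi\circ D\Delta=0$, since $Du\colon \Lie N^{n+1}G\to\Lie N^1G$ is surjective. In Witt coordinates $D\phi(\by_0,\dots,\by_n)=(\pi\by_1,\dots,\pi\by_n)$, and $R$ is $\pi$-torsion free, so slots $1,\dots,n$ of $D\Delta$ vanish. Slot $0$ is read off from $u\circ\Delta=-\phi\circ\iota\colon N^1G\to G$. This gives $D\Delta(\bx_1)=(-\pi\bx_1,0,\dots,0)$.

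Second, the sign is not something you may leave as ``$\pm$'' and then fix to match the statement. The identity in (1) does not involve $\Delta$ at all, so no convention in Theorem \ref{comfact} can affect it; only the normalization of $\bPsi_1$ matters. Take your normalization $D\bPsi_1=\mathrm{id}$, which is also the paper's later definition $\bPsi_1=\frac{1}{\pi}\log_G(\pi\bx)$. Then your computation yields $\gamma_\Theta=-\pi A_0$.

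A direct check confirms this. Take $G=\hG$ and $\Theta=x_0\in\bX_1(\hG)$, so $A_0=1$. Then $\iota^*\Theta=0$, hence $\mfrak{f}^*\iota^*\Theta=0$, while $\iota^*\phi^*\Theta=\pi x_1=\pi\bPsi_1$. The stated formula with $\gamma_\Theta=\pi A_0$ would give $0=2\pi\bPsi_1$.

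So the formula as stated holds only if $\bPsi_1$ is normalized to have differential $-\mathrm{id}$. Equivalently, with $D\bPsi_1=\mathrm{id}$ it holds with $\gamma_\Theta=-\pi A_0$. Either way, your last step asserts a sign that your own computation does not produce, and you need to fix the convention explicitly.
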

\begin{proof}
See Proposition $6.3$ in \cite{BS_b}. Do note that here we flipped the difference and hence sign of $\gamma_{\Theta}$ for our convenience; however, the proof follows exactly by the same argument.
\end{proof}
\noindent For any $R$-module $M$, let us denote
$$
M_{\phi} = R\otimes_{\phi,R}M.
$$
Let $G$ be any group scheme, we define the $R$-module of {\it primitive} delta characters $\bXp(G)$ as 
$$
\bXp(G):= \varinjlim \bX_n(G)/\phi^*(\bX_{n-1}(G)_\phi).
$$
A class $[\Theta] \in \bXp(G)$ represented by a delta character $\Theta$, is 
a primitive character that can not be written as a sum of a combination of delta 
characters of lower order via various pull-backs by $\phi^*$. Often, we will denote the primitive characters by $\Theta$ itself if there is no confusion. For each $n\geq 1,$ there is a natural inclusion of $R$-modules 
$\bX_n(G)/\phi^*\bX_{n-1}(G)\rightarrow \bXp(G).$ 
Recall that the $\phi$-linear map $\phi^*:\bX_{n-1}(G) \to \bX_n(G)$ induces a linear map
$\bX_{n-1}(G)_{\phi}\to \bX_n(G)$, which we will abusively also denote $\phi^*$.
We then define
$$
\bH_{n}(G) = \frac{\Hom(N^{n}G,\hG)}{i^*\phi^*(\bX_{n-1}(G)_{\phi})}.
$$
Note that 
$u:N^{n+1}G\map N^nG$ induces $u^*:\Hom(N^nG,\hG) \map \Hom(N^{n+1}G,\hG)$. 
Moreover, since $u$ commutes with both $i$ and $\phi$, we have
$$
u^*i^*\phi^*(\bX_n(G)_\phi) = i^*\phi^*u^*(\bX_n(G)_\phi) \subset i^*\phi^*(\bX_{n+1}(G)_\phi),
$$
and hence $u$ also induces a map $u^*:\bH_n(G) \map \bH_{n+1}(G)$.
Define 
\begin{equation}
	\label{bigH}
	\Hd(G)= \varinjlim \bH_n(G)
\end{equation}
where the limit is taken in the category
of $R$-modules. Similarly, $\mfrak{f}: N^{n+1}G \map N^nG$ induces 
$\mfrak{f}^*:\Hom(N^nG,\hG) \map \Hom(N^{n+1}G,\hG)$, 
which descends to a $\phi$-semilinear map of $R$-modules
\begin{equation}	
	\label{latfrobH}
	\mfrak{f}^*:\bH_n(G) \map\bH_{n+1}(G)
\end{equation}
because (by Proposition $6.3(2)$ in \cite{BS_b}) we have $$\mfrak{f}^*i^*\phi^*(\bX_{n-1}(G)_\phi)= 
i^*\phi^*\phi^*(\bX_{n-1}(G)_\phi) \subset i^*\phi^*(\bX_{n}(G)_\phi).$$
This induces a $\phi$-semilinear endomorphism $$\mfrak{f}^*:\Hd(G) \map \Hd(G).$$
 
 \subsection{Earlier known results on delta isocrystal of semiabelian schemes}
 For readers convenience, we have collected here the following important structure theorems on the delta isocrystal from \cite{ GPS, PS-2}.
 
\begin{theorem}[Theorem $1.5$ in \cite{PS-2}]\label{yeah}
Let $G$ be a $\pi$-formal semiabelian scheme of relative dimension $g$ 
over $R$. Then
we have 
\begin{enumerate}
\item The $R$-module $\bXp(G)$ is free of rank $g$.
\item The $R$-module $\Hd(G)$ is free satisfying
	$g \leq \rk_R\Hd(A) \leq 2g$.
\end{enumerate}		
\end{theorem}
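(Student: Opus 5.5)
The plan is to reduce everything to explicit rank computations on the kernels $N^nG$, using the exact sequence \eqref{sel}. First, since $N^nG\simeq J^{n-1}(N^1G)$ and $N^1G$ is isomorphic to $\hG^g$ in the Witt coordinates $\bx_1$ (the kernel of the first jet of a smooth group is a vector group up to this identification), $\Hom(N^nG,\hG)$ is the character module of $J^{n-1}(\hG^g)$. That module is free on the coordinate characters $\bPsi_1$ of $N^1G$ together with their iterated Frobenius pullbacks $(\mff^*)^j\bPsi_1$, $0\le j\le n-1$. Hence $\Hom(N^nG,\hG)$ is free of rank $ng$.

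Second, I use $\bX_0(G)=\Hom(G,\hG)=0$ for semiabelian $G$. The sequence \eqref{sel} then gives the exact sequence
\begin{equation*}
0\to\bX_n(G)\xrightarrow{\iota^*}\Hom(N^nG,\hG)\to\bI_n(G)\to 0 .
\end{equation*}
It yields $\rk\bX_n(G)=ng-\rk\bI_n(G)$, and hence
\begin{equation*}
\rk\bX_n(G)-\rk\bX_{n-1}(G)=g-h_n .
\end{equation*}
Since $\phi^*$ is injective on characters (the map $\phi:J^nG\to J^{n-1}G$ is faithfully flat on generic fibres), $\bX_n(G)/\phi^*\bX_{n-1}(G)_\phi$ has rank $g-h_n$. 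For $n\ge\mup$ this rank equals $g$. The transition maps into $\bXp(G)$ are injective, so $\bXp(G)$ has rank $g$ and stabilizes at level $\mup$.

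Third, for $\Hd(G)$, I factor $\bH_n(G)\to\bI_n(G)$ through the quotient by $\iota^*\bX_n(G)$, which gives the exact sequence
\begin{equation*}
0\to\bX_n(G)/\phi^*\bX_{n-1}(G)_\phi\to\bH_n(G)\to\bI_n(G)\to0 .
\end{equation*}
Passing to the limit gives
\begin{equation*}
0\to\bXp(G)\to\Hd(G)\to\bI(G)\to0 .
\end{equation*}
Here $\bI(G)\subset\Ext(G,\hG)$ is free, being a submodule of a finite free module over a DVR, and has rank at most $\rk\Ext(G,\hG)$. For an abelian scheme this rank is $\rk H^1(A,\Ou_A)=g$; for a semiabelian $G$ the torus contributes nothing to $\Ext(-,\hG)$, so the rank is at most $g$. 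Hence $g\le\rk\Hd(G)\le 2g$. Freeness of $\Hd(G)$ follows from freeness of $\bXp(G)$ and of $\bI(G)$, since the sequence splits once the outer terms are free.

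The main obstacle is freeness, i.e.\ torsion-freeness, of $\bX_n(G)/\phi^*\bX_{n-1}(G)_\phi$. Concretely, if $\pi\Theta=\phi^*\Theta'$ one must show that $\Theta'$ is divisible by $\pi$. I would attack this by reducing mod $\pi$ and using the differentials $D\Theta=(A_0,\dots,A_n)$ together with Proposition~\ref{diff}. A pullback $\phi^*\Theta'$ has its expansion in the basis $(\mff^*)^j\bPsi_1$ controlled by $\iota^*\Theta'$ shifted by $\mff^*$, plus a correction $\gamma_{\Theta'}\bPsi_1$ with $\gamma_{\Theta'}=\pi A_0$. Comparing the coefficients mod $\pi$ in the free basis of $\Hom(N^nG,\hG)$ shows that $\iota^*\Theta'\equiv0$ mod $\pi$, hence $\Theta'\in\pi\bX_{n-1}(G)$ by saturation of $\iota^*\bX_{n-1}(G)$. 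That saturation holds because the cokernel $\bI_{n-1}(G)$ is torsion-free.
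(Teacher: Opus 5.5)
Your rank bookkeeping is sound. From $\bX_0(G)=0$ and $\rk\Hom(N^nG,\hG)=ng$ you correctly obtain $\rk\,\bX_n(G)/\phi^*\bX_{n-1}(G)=g-h_n$, the exact sequence $0\to\bXp(G)\to\Hd(G)\to\bI(G)\to0$, and, granting freeness, $g\le\rk\Hd(G)\le 2g$.

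\textbf{The gap is finite generation of $\bXp(G)$ over $R$.} Put $M_n=\bX_n(G)/\phi^*\bX_{n-1}(G)$. Equal ranks together with injective transition maps only show that $M_n\otimes K\to M_{n+1}\otimes K$ is an isomorphism for $n\ge\mup$. Over $R$, each inclusion $M_n\subset M_{n+1}$ may still have a nonzero finite-length cokernel. An increasing union of free rank-$g$ modules need not be finitely generated: $\bigcup_n\pi^{-n}R^g=K^g$. So your claim that $\bXp(G)$ ``stabilizes at level $\mup$'' is justified only after tensoring with $K$, which is exactly how the paper uses $\mup$ (e.g.\ $\bI_{\mup-1}(G)_K=\bI(G)_K$). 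Torsion-freeness of each $M_n$, which you single out as the main obstacle, does not address this. It shows that $\bXp(G)$ is torsion-free of rank $g$, not that it is free. Since you derive freeness of $\Hd(G)$ from freeness of $\bXp(G)$, part (2) inherits the same gap.

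\textbf{How to close it.} You need a fixed finitely generated piece that controls every level, and your own mod-$\pi$ computation is one step away from it.
\begin{itemize}
\item Borger--Saha's Proposition 5.2 (as in Lemma~\ref{frob-shift}) plus a rank count gives $\Hom(N^{n+1}G,\hG)=R\bPsi_1\oplus\mff^*\Hom(N^nG,\hG)$ (use $\hP$ in place of $\bPsi_1$ if $v>0$).
\item Proposition~\ref{diff} gives $\iota^*\phi^*\Theta=\mff^*\iota^*\Theta-\gamma_\Theta\bPsi_1$.
\item Hence the rule $c\bPsi_1+\mff^*\Psi\mapsto\partial\Psi$ is well defined on $\bH_{n+1}(G)$. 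It yields an exact sequence $0\to\bH_1(G)\to\bH_{n+1}(G)\to\bI_n(G)\to0$ with $\bH_1(G)=\Hom(N^1G,\hG)\simeq R^g$, and this sequence is compatible with $u^*$.
\item Passing to the limit, $\Hd(G)$ is an extension of $\bI(G)$ by $R^g$. Since $\bI(G)$ is an ascending union of submodules of the finite free module $\Ext(G,\hG)$, Noetherianity gives $\bI(G)=\bI_N(G)$ for some $N$, and it is free.
\item Therefore $\Hd(G)$ is free of rank $g+\rk\bI(G)\le 2g$. Then $\bXp(G)\subset\Hd(G)$ is finitely generated and torsion-free, hence free of rank $g$.
\end{itemize}
This also makes your torsion-freeness argument unnecessary, because $M_n\subset\bH_n(G)$.

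\textbf{A minor inaccuracy.} $N^1G$ is not $\hG^g$ in general. It carries the group law $\pi^{-1}F(\pi X,\pi Y)$, for instance $x+y+\pi xy$ for $\hat{\mathbb G}_m$, and $\bPsi_1=\pi^{-1}\log_G(\pi\bx)$ identifies it with $\hG^g$ in general only after inverting $\pi$. What you actually use is that $\Hom(N^nG,\hG)$ is free on the $(\mff^*)^j\bPsi_1$, and that is nonetheless true for $R=W(\bF_q)$.
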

\noindent The theorem above has been proved for abelian schemes, however the same proof works for semiabelian schemes without any difficulty. 

From now on wards (throughout the rest of the paper)  we consider $R = W(\bF_q)$ where $q$ is a power of our 
prime $p$ with $k = \bF_q$ as its residue field and the fixed lift of 
Frobenius $\phi$ on $R$ is taken to be the identity map. 
Hence a semilinear
operator for an isocrystal over $K$ are linear operators.

\begin{theorem} [Theorem $1.1$ in \cite{GPS}]\label{GPS-1} Let $G$ be a $\pi$-formal semiabelian scheme over $R.$ Then the map
\begin{align}\label{chars--diff} \Upsilon: \bXp(G)&\to H^0(G,\Omega_G)\\
\nonumber\Theta(\bx_0) &\to \frac{\gamma_{\Theta}}{\pi} \cdot d\bx_0
\end{align}
is injective with $\pi$-torsion cokernel. In particular, $\Upsilon:  \bXp(G)_K\to H^0(G,\Omega_G)_K$ is an isomorphism of $K$-vector spaces.
\end{theorem}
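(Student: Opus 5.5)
The plan is to check three things in turn: that $\Upsilon$ is well defined on $\bXp(G)$ and lands in invariant differentials, that it is injective, and that its cokernel is torsion, which follows from a rank count.

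\textbf{Well-definedness.} For $\Theta\in\bX_n(G)$ with $D\Theta=(A_0,\dots,A_n)$, the linear form $A_0=\gamma_\Theta/\pi$ on $\Lie(G)$ is integral. It corresponds, by translation invariance, to a unique invariant differential $A_0\cdot d\bx_0\in H^0(G,\Omega_G)$. I then check that this assignment respects the two relations used to build $\bXp(G)$.
\begin{itemize}
\item Compatibility with $u^*$ is immediate, since pulling back along $u:J^{n+1}G\to J^nG$ does not change the $\bx_0$-component of the differential.
\item For the relation $\phi^*\bX_{n-1}(G)_\phi$, I compute the differential of $\phi:J^nG\to J^{n-1}G$ at the identity in Witt coordinates. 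Its first component is $\bx_0^q+\pi\bx_1$, whose linear part has no $d\bx_0$ term, and the higher ghost-type components likewise have no linear $\bx_0$ term.
\end{itemize}
Hence $A_0(\phi^*\Theta')=0$, so $\gamma$ kills $\phi^*\bX_{n-1}(G)_\phi$. This gives an $R$-linear map $\Upsilon:\bXp(G)\to H^0(G,\Omega_G)$.

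\textbf{Rank count and torsion cokernel.} By Theorem \ref{yeah}(1), $\bXp(G)$ is free of rank $g$, and $H^0(G,\Omega_G)$ is free of rank $g$. So once injectivity is proved, $\Upsilon$ is an injection between free modules of equal rank over the DVR $R$. Its cokernel is then finite $\pi$-torsion, and $\Upsilon_K$ is an isomorphism.

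\textbf{Injectivity (main obstacle).} Suppose $\Theta\in\bX_n(G)$ with $A_0=0$; I must show $[\Theta]=0$ in $\bXp(G)$. I would try induction on $n$ using Proposition \ref{diff} and the identification $N^nG\simeq J^{n-1}(N^1G)$.
\begin{itemize}
\item With $\gamma_\Theta=0$, Proposition \ref{diff}(1) gives $\mfrak{f}^*(\iota^*\Theta)=\iota^*\phi^*\Theta$.
\item Since $N^1G$ is (in Witt coordinates $\bx_1$) isomorphic to $\hG^g$, the characters of $N^nG\simeq J^{n-1}(\hG^g)$ are, after tensoring with $K$, the $K$-span of the $(\mfrak{f}^*)^i$ applied to the coordinate characters $\bx_1$.
\item Write $\iota^*\Theta=\sum_i (\mfrak{f}^*)^i\psi_i$. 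Using Proposition \ref{diff} (both parts) together with the exact sequence \eqref{sel} and the fact that $\bX_0(G)=0$ for semiabelian $G$ after tensoring, I want to replace the terms with $i\ge1$ by $\iota^*\phi^*$ of lower-order characters.
\item This should produce $\Theta'\in\bX_{n-1}(G)_K$ with $\Theta-\phi^*\Theta'$ of lower order and still having $A_0=0$, so induction applies.
\end{itemize}
The delicate point is integrality: the manipulations a priori live over $K$. I would handle this by working in $\bXp(G)_K$ and then using that $\bXp(G)$ is torsion-free (Theorem \ref{yeah}), so vanishing over $K$ suffices. If this decomposition argument fails, the fallback is to compare directly with Buium's computation of the linearization of order-$n$ characters.
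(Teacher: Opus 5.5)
\paragraph{The gap is in the injectivity step.} The paper gives no proof of this statement; it quotes it from \cite{GPS}, so your argument has to be judged on its own terms.

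Your well-definedness check is correct. The linearization of $\phi:J^nG\to J^{n-1}G$ at the identity is $(\bx_0,\dots,\bx_n)\mapsto(\pi\bx_1,\dots,\pi\bx_n)$, so $A_0(\phi^*\Theta')=0$. Your rank count is also correct once injectivity is known, provided $H^0(G,\Omega_G)$ is read as invariant differentials. But injectivity is the whole content of the theorem, and the sketch does not supply it.

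The hypothesis $A_0=0$ never enters your decomposition. Since $D(\iota^*\Theta)=(A_1,\dots,A_n)$, writing $\iota^*\Theta=\sum_i(\fra^*)^i\psi_i$ retains no trace of $A_0$. Moreover, to trade $\sum_{i\ge1}(\fra^*)^i\psi_i=\fra^*\Psi'$ for $\iota^*\phi^*\Theta'+c\,\bPsi_1$ via Proposition \ref{diff}(1), you need $\Psi'=\iota^*\Theta'$ for some $\Theta'\in\bX_{n-1}(G)_K$, i.e.\ $\partial(\Psi')=0$ in $\Ext(G,\hG)_K$. You only know $\partial(\psi_0)+\partial(\fra^*\Psi')=0$. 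There is no formula for $\partial\circ\fra^*$ in terms of $\partial$: by Theorem \ref{main-thm}, $\fra^*$ corresponds to $\Fc$, which moves the Hodge filtration. So the ``replacement'' step is the theorem itself rather than a reduction, and the induction has nothing driving it. The integrality and torsion-freeness remarks are fine but beside the point.

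\paragraph{Where $A_0$ actually appears.} It appears in $\Hd(G)$. Proposition \ref{diff}(1) gives $\fra^*[\Theta]=\gamma_\Theta\bPsi_1$, so $A_0=0$ if and only if $\fra^*[\Theta]=0$. Hence injectivity of $\Upsilon_K$ is equivalent to injectivity of $\fra^*$ on $\bXp(G)_K$ (compare Lemma \ref{frob-bij-2} and the proof of Theorem \ref{order-1}). Quoting Theorem \ref{bijfra-1} would close the gap in one line. However, that result comes from the same source and, restricted to $\bXp(G)_K$, is equivalent to what you are proving, so it cannot serve as an independent input.

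\paragraph{A route that does use $A_0$.} This is a possible direction, not a completed argument:
\begin{enumerate}
\item On the formal completion at the identity, after inverting $\pi$, the ghost map and $\log_G$ give $\Theta=\sum_{i=0}^n\mu_i\,(\phi^{*})^{i}\log_G$ with $\mu_0=A_0$.
\item So $A_0=0$ says exactly that $\Theta$ kills the formal germ of $\ker(\phi:J^nG\to J^{n-1}G)$.
\item Modulo $\pi$, $\phi$ is Frobenius composed with the projection. It should therefore be faithfully flat, with a kernel whose special fibre is connected.
\item An algebraization-plus-descent argument along $\phi$ would then give $\Theta\in\phi^*\bX_{n-1}(G)$.
\end{enumerate}
Some geometric input of this kind, one that genuinely sees $A_0$, is what your proposal is missing.
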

 \begin{theorem}[Theorem $1.2$ in \cite{GPS}]
\label{bijfra-1}
Let $G$ be a $\pi$-formal semiabelian  scheme of relative dimension $g$ 
over $R$. 
Then the semilinear operator $\fra^*: \bH_\d(G)_K \map \bH_\d(G)_K$ is a 
bijection.

Hence $(\bH_\d(G)_K,\fra^*, (\bH_\d(G)_K\supset \bXp(G)_K\supset \{0\} ))$ is a filtered $F$-isocrystal.

\end{theorem}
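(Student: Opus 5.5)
Since $\phi$ is the identity on $R=W(\bF_q)$, the operator $\fra^*$ on $\bH_\d(G)_K$ is $K$-linear. By Theorem \ref{yeah}, $\bH_\d(G)_K$ is finite dimensional, of dimension at most $2g$. It therefore suffices to prove that $\fra^*$ is \emph{injective}; bijectivity then follows by counting dimensions. Once bijectivity is known, the filtration $\bH_\d(G)_K\supset\bXp(G)_K\supset\{0\}$ imposes no compatibility condition on $\fra^*$, so $(\bH_\d(G)_K,\fra^*,\bH_\d(G)_K^\bullet)$ is a filtered $F$-isocrystal. So the whole argument reduces to showing $\ker(\fra^*)\otimes K=0$.

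To organize the injectivity argument, I would use the exact sequence obtained from \eqref{sel} after quotienting by $i^*\phi^*\bX_{n-1}(G)$:
\begin{equation*}
0\to \bX_n(G)/\phi^*\bX_{n-1}(G)\to \bH_n(G)\stackrel{\partial}{\to}\bI_n(G)\to 0 .
\end{equation*}
Passing to the limit gives $0\to\bXp(G)\to\bH_\d(G)\to\bI(G)\to 0$. I would then compute $\fra^*$ on each piece.
\begin{itemize}
\item On the sub: Proposition \ref{diff}(1) gives $\fra^*(\iota^*\Theta)\equiv \gamma_\Theta\cdot\bPsi_1$ modulo $i^*\phi^*\bX_n(G)$. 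Hence for a primitive class $[\Theta]$, its image is the class of $\gamma_\Theta\bPsi_1$, a character of $N^1G$ pulled back. By Theorem \ref{GPS-1}, $\gamma_\Theta=\pi A_0\neq0$ whenever $[\Theta]\neq 0$ in $\bXp(G)_K$.
\item On the quotient: I would use Theorem \ref{comfact}, which says $\iota\circ\fra=\phi\circ\iota+\Delta\circ u$. With it I would compare the pushout class $\partial(\fra^*\psi)\in\Ext(G,\hG)$ with $\partial(\psi)$ for $\psi\in\Hom(N^nG,\hG)$. The goal is to show that $\fra^*$ induces an injective ($K$-linearly, after inverting $\pi$) map on the graded pieces $\bI_{n}(G)/\bI_{n-1}(G)$. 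It should shift these pieces by one in the filtration index.
\end{itemize}

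Injectivity would then be proved by induction on the order $n$. Suppose $[\psi]\in\bH_n(G)_K$ satisfies $\fra^*[\psi]=0$, so that $\fra^*\psi=i^*\phi^*\Theta$ with $\Theta\in\bX_n(G)_K$.
\begin{enumerate}
\item First, $\fra^*\psi$ extends to $J^{n+1}G$, so $\partial(\fra^*\psi)=0$.
\item Via the $\Delta$-relation above, I would deduce that $\partial\psi$ lies in lower filtration. Iterating this pushes $\psi$ down to $\Hom(N^1G,\hG)$ modulo $\bXp$-classes.
\item On $N^1G\simeq\hG^g$, the relevant characters are $R$-combinations of the coordinate characters $\bPsi_1$. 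There I would use Proposition \ref{diff}(2) together with the nonvanishing of $\gamma_\Theta$ from Theorem \ref{GPS-1}. This should show that the residual class must be a primitive character whose image $\gamma_\Theta\bPsi_1$ is zero in $\bH_{n+1}(G)_K$, forcing $\gamma_\Theta=0$ and hence $[\Theta]=0$.
\end{enumerate}

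I expect the main obstacle to be the compatibility in the second bullet: controlling how $\fra^*$ interacts with $\partial$, given that $\phi:J^{n+1}G\to J^nG$ does not carry $N^{n+1}G$ into $N^nG$. The correction term $\Delta\circ u$ must be tracked explicitly. For it I would first try to show that $\Delta$ induces a well-defined map on $\Ext(G,\hG)$ with $\pi$-torsion kernel, for instance by computing its differential in the Witt coordinates $(\bx_1,\dots,\bx_n)$.
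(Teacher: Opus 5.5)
\textbf{There is a genuine gap: the injectivity argument is missing its key step, and the intermediate claim it relies on is false.}

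Your reduction is fine. $\fra^*$ is $K$-linear on a space of dimension at most $2g$ (Theorem~\ref{yeah}), so bijectivity in one direction suffices, and the filtration imposes no extra condition.

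\emph{Step (2).} This is the only place where the quotient $\bI(G)$ is handled, and it is left as an expectation. The statement it targets is false. The chain $\bI_1(G)\subset\bI_2(G)\subset\cdots$ stabilizes inside the finite free module $\Ext(G,\hG)$. So if the induced maps $(\bI_n/\bI_{n-1})_K\to(\bI_{n+1}/\bI_n)_K$ were injective for all $n$, downward induction from the vanishing pieces would force $\bI(G)_K=0$. This already fails for a non-CL elliptic curve $A$: there $\bX_1(A)=0$ makes $\partial$ injective on $\Hom(N^1A,\hG)$, so $\bI_1(A)_K$ is a line while $(\bI_2/\bI_1)_K=0$.

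The obstruction is structural. $\fra^*$ does not respect the extension $0\to\bXp\to\bH_\d\to\bI\to 0$: by Proposition~\ref{diff}(1) it moves $\bXp(G)_K$ into the image of $\Hom(N^1G,\hG)_K$, which maps onto $\bI_1(G)_K$. So comparing $\partial(\fra^*\psi)$ with $\partial\psi$ cannot yield injectivity. For example, take a supersingular elliptic curve over $W(\bF_p)$ with $p\ge 5$. The comparison of Theorem~\ref{Iso-crys-11} forces $\mathrm{tr}(\fra^*)=a_p=0$. Since $\fra^*$ sends the generator of $\bXp(A)_K$ to a multiple of $[\bPsi_1]$, this gives $\fra^*[\bPsi_1]\in\bXp(A)_K$. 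Hence $\partial(\fra^*\bPsi_1)=0$, although $\partial\bPsi_1\neq 0$.

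\emph{Step (3).} This step has a separate problem. Deducing $\gamma_\Theta=0$ from $[\gamma_\Theta\bPsi_1]=0$ in $\bH_\d(G)_K$ requires the image of $\Hom(N^1G,\hG)_K$ in $\bH_\d(G)_K$ to be $g$-dimensional. By Theorem~\ref{GPS-1} and Proposition~\ref{diff}(1), that is equivalent to injectivity of $\fra^*$ on $\bXp(G)_K$, which is part of what you are trying to prove.

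\emph{The fix: prove surjectivity.} The paper does not reprove this statement; it quotes it from Gurney--Pandit--Saha. The missing idea is to aim for surjectivity, which your dimension count handles equally well.
\begin{enumerate}
\item Given $c\in K^g$, Theorem~\ref{GPS-1} provides $[\Theta]\in\bXp(G)_K$ with $\gamma_\Theta=c$.
\item Proposition~\ref{diff}(1) then gives $\fra^*[\Theta]=[c\cdot\bPsi_1]$. Since $\bPsi_1$ is a $K$-basis of $\Hom(N^1G,\hG)_K$, the image of $\Hom(N^1G,\hG)_K$ in $\bH_\d(G)_K$ lies in $\fra^*(\bXp(G)_K)$.
\item The identity $\Hom(N^{n+1}G,\hG)=\Hom(N^1G,\hG)+\fra^*\Hom(N^nG,\hG)$ (Borger--Saha, Proposition 5.2, as used in Lemma~\ref{frob-shift}) shows by induction that the image of every $\Hom(N^nG,\hG)_K$ lies in $\fra^*(\bH_\d(G)_K)$.
\end{enumerate}
Thus $\fra^*$ is a surjective $K$-linear endomorphism of a finite-dimensional space, hence bijective. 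This is the induction of Lemma~\ref{frob-n} run for surjectivity only. Theorem~\ref{GPS-1} supplies the base case directly, so nothing is circular. Injectivity on $\bXp(G)_K$, which your step (3) needed as an input, then comes out as a consequence.
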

\section{An explict description of $\Hd(G)$ for semiabelian schemes} \label{S2}
In this section we show that the delta isocrystal $\Hd(G)$ of a semiabelian scheme $G$ can be explicitly described in terms of the primitive delta characters of $G$ and the Frobenius operator $\mff^*.$ More specifically, we will prove that $(\Hd(G),\mff^*)$ is the smallest isocrystal containing $\bXp(G).$
Note that for each $n\geq1$, we have the following commutative cartesian diagram of $R$-modules
\begin{align}\label{com-1}
\xymatrix{
 \bX_n(G) \ar@{^-->}[d]^{p_n} \ar@{^{(}->}[r]^-{i^*} & \Hom(N^nG,\hG) \ar@{^-->}[d]^{\tilde{p}_n}\\
    \bXp(G) \ar@{^{(}->}[r]^-{i^*}& \Hd(G) 
}
\end{align}
\begin{lemma}\label{com-2} Let $G$ be a group scheme. Then, with the above notation, we have
$$i^*p_n\bX_n(G)=i^*\bXp(G) \cap \tilde{p}_n\Hom(N^nG,\hG).$$ In other words, we have $$i^*\bX_n(G)/i^*\phi^*\bX_{n-1}G=i^*\bXp(G)\cap \bH_n(G).$$

\end{lemma}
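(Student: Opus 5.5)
The plan is to work at a finite level and then pass to the limit. All maps involved are compatible with $u^*$ and $\phi^*$, and $\bXp(G)$ and $\Hd(G)$ are direct limits of $\bX_m(G)/\phi^*\bX_{m-1}(G)_\phi$ and $\bH_m(G)$. So an element of $i^*\bXp(G)\cap\bH_n(G)$ is represented, for some large $m\ge n$, by a character $\Theta\in\bX_m(G)$ and by $\eta\in\Hom(N^nG,\hG)$ such that
$$i^*\Theta-u^*\eta\in i^*\phi^*(\bX_{m-1}(G)_\phi)\subset\Hom(N^mG,\hG).$$
Replacing $\Theta$ by $\Theta-\phi^*\Theta'$, which does not change its class in $\bXp(G)$, we may assume $i^*\Theta=u^*\eta$ on the nose. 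Here $u^*$ is the pullback along $N^mG\to N^nG$, and it is injective because that projection is surjective.

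The inclusion $\subseteq$ is immediate. For $\Theta\in\bX_n(G)$ the class of $i^*\Theta$ lies in $\bH_n(G)$ by definition. Its image in $\Hd(G)$ coincides with $i^*$ of its class in $\bXp(G)$, by commutativity of \eqref{com-1}.

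For $\supseteq$ we must show that a character $\Theta\in\bX_m(G)$ whose restriction to $N^mG$ factors through $u:N^mG\to N^nG$ in fact comes from $\bX_n(G)$. The idea is to use the exact sequences \eqref{se} for levels $m$ and $n$, together with the map of extensions induced by $u:J^mG\to J^nG$, which is the identity on $G$. Via \eqref{sel}, the restriction $i^*\Theta$ has trivial boundary $\partial(i^*\Theta)=0$ in $\Ext(G,\hG)$ at level $m$. The boundary is compatible with the pullback $u^*$ because the extension class of $J^mG$ is the pullback of that of $J^nG$ along the identity of $G$. Hence $\partial_m(u^*\eta)=\partial_n(\eta)$, so $\partial_n(\eta)=0$. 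By exactness of \eqref{sel} at level $n$ there is then $\Theta_1\in\bX_n(G)$ with $i^*\Theta_1=\eta$. It follows that $i^*(\Theta-u^*\Theta_1)=0$, so $\Theta-u^*\Theta_1$ lies in the image of $\bX_0(G)$, i.e.\ it is pulled back from $G$. An element of $\bX_0(G)$ pulled back to $J^mG$ is also pulled back through $J^nG$. Thus $\Theta\in u^*\bX_n(G)$, and its class lies in the image of $p_n$, as required.

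The main obstacle is the first reduction. One must check that an equality in $\Hd(G)$ between the images of $i^*\Theta$ and of $\eta$ really lifts to an equality modulo $i^*\phi^*(\bX_{m-1}(G)_\phi)$ at some finite level $m$. This requires the transition maps in the direct limits to be compatible, which was recorded when $u^*:\bH_n(G)\to\bH_{n+1}(G)$ was defined. One must also check that subtracting $\phi^*\Theta'$ from $\Theta$ is legitimate, which holds because $i^*\phi^*$ of $\bX_{m-1}(G)_\phi$ is exactly the submodule being quotiented. The remaining care is that $\phi^*$ of $\bX_{m-1}(G)_\phi$ is an $R$-span; since $\phi$ is the identity on $R$, this causes no issue.
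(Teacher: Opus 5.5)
Your proof is correct. Its skeleton matches the paper's: both push the equality in $\Hd(G)$ down to a finite level $m$ and absorb the $i^*\phi^*$-term into $\Theta$. That reduces the lemma to showing that a $\Theta\in\bX_m(G)$ with $i^*\Theta=u^*\eta$, for some $\eta\in\Hom(N^nG,\hG)$, already comes from $\bX_n(G)$.

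The difference lies in this last step.

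\textbf{The paper's version.} It passes from $u^*\Psi\in i^*\bX_m(G)$ directly to $\Psi\in i^*\bX_n(G)$ without argument. It then invokes injectivity of $i^*$ on $\bX_m(G)$. That injectivity holds only when $\bX_0(G)=\Hom(G,\hG)=0$. This is true for semiabelian $G$, but not for an arbitrary group scheme as in the statement.

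\textbf{Your version.} You supply exactly what is missing.
\begin{enumerate}
\item The map $J^mG\to J^nG$ is a morphism of extensions that is the identity on $G$. Naturality of the connecting map then gives $\partial_n(\eta)=\partial_m(u^*\eta)=\partial_m(i^*\Theta)=0$.
\item Hence $\eta$ extends to some $\Theta_1\in\bX_n(G)$.
\item You handle the kernel of $i^*$ correctly: $\Theta-u^*\Theta_1$ comes from $\bX_0(G)$, which already factors through $J^nG$.
\end{enumerate}

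An equivalent shortcut: $\ker(J^mG\to J^nG)=\ker(N^mG\to N^nG)$, and $\Theta$ vanishes there because $i^*\Theta=u^*\eta$. So $\Theta$ factors through the epimorphism $J^mG\to J^nG$.

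Your version is therefore the more complete argument. It works for any smooth $G$ whose jet sequences are exact, whereas the paper's proof tacitly uses $\bX_0(G)=0$ and leaves its key step unproved.
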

\begin{proof} We will only prove $i^*p_n\bX_n(G)=i^*\bXp(G) \cap \tilde{p}_n\Hom(N^nG,\hG)$, and the latter equality in the lemma follows by the definition. By the commutativity of the diagram we have $i^*p_n(\bX_n(G))\subset i^*\bXp(G) \cap \tilde{p}_n\Hom(N^nG,\hG).$ On the other hand, let $[\Theta]\in i^*\bXp(G) \cap \tilde{p}_n\Hom(N^nG,\hG).$ Then $[\Theta]=[\Psi],$ for some $\Psi\in  \Hom(N^nG,\hG)$ and $[\Theta]=[\Theta_m]$ for some $\Theta_m\in \bX_m(G)$ and $m\in\mathbb{N}.$ If $m\leq n,$ then $[\Theta]\in p_n\bX_m(G)\subset p_n\bX_n(G)$ and we are done.

 Let $m>n$, by the commutativity of the diagram \ref{com-1}, we get $$i^*\Theta_m-\Psi= i^*\phi^*\Theta'\in i^*\bX_m(G).$$ Hence $i^*\Theta_m- i^*\phi^*\Theta'=\Psi \in i^*\bX_n(G)\subset i^*\bX_m(G).$  As $i^*$  is injective, we have $\Theta_m-\phi^*\Theta' \in \bX_n(G)$. Then $[\Theta]=[\Theta_m-\phi^*\Theta']\in p_n\bX_n(G)$  and that completes the proof.
\end{proof}

\begin{theorem}\label{order-1} Let $G$ be a  $\pi$-formal semiabelian scheme over $R$ of relative 
dimension $g$. Then $$i^*\bX_1(G)_K/i^*\phi^{*}\bX_0(G)_K=i^*\bXp(G)_K\cap \mff^{*}i^*\bXp(G)_K$$
\end{theorem}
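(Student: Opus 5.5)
The plan is to compute the image $\mff^*\big(i^*\bXp(G)_K\big)$ inside $\Hd(G)_K$ explicitly and show that it equals the image of $\bH_1(G)_K$. Once that is done, Lemma \ref{com-2} with $n=1$, tensored with $K$, gives
$$i^*\bX_1(G)_K/i^*\phi^*\bX_0(G)_K=i^*\bXp(G)_K\cap \bH_1(G)_K,$$
and the theorem follows by substituting $\bH_1(G)_K=\mff^*i^*\bXp(G)_K$ on the right. Tensoring with $K$ is harmless because localization is exact.

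\textbf{Step 1 (the inclusion $\mff^*i^*\bXp(G)_K\subset\bH_1(G)_K$).} Take a primitive class represented by $\Theta\in\bX_n(G)$. By Proposition \ref{diff}(1),
$$\mff^*(i^*\Theta)=i^*\phi^*\Theta+\gamma_\Theta\cdot\bPsi_1 .$$
The term $i^*\phi^*\Theta$ lies in $i^*\phi^*(\bX_n(G)_\phi)$, so it vanishes in $\bH_{n+1}(G)$ and hence in $\Hd(G)$. Therefore $\mff^*[i^*\Theta]=\gamma_\Theta\cdot[\bPsi_1]$ in $\Hd(G)$. Here $\bPsi_1=(\Psi_{11},\dots,\Psi_{1g})$ is the tuple of basic characters of $N^1G$ coming from the Witt coordinates $\bx_1$. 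Each component $\Psi_{1j}$ lies in $\Hom(N^1G,\hG)$, so the right-hand side lies in the image of $\bH_1(G)$.

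\textbf{Step 2 (the reverse inclusion).} As $\Theta$ ranges over $\bXp(G)_K$, the row vector $\gamma_\Theta/\pi$ is exactly the coefficient vector of $\Upsilon(\Theta)$. By Theorem \ref{GPS-1}, $\Upsilon$ is an isomorphism $\bXp(G)_K\simeq H^0(G,\Omega_G)_K\cong K^g$, so $\gamma_\Theta$ ranges over all of $K^g$. Hence $\mff^*i^*\bXp(G)_K$ is the $K$-span of the classes $[\Psi_{11}],\dots,[\Psi_{1g}]$. It remains to see that these classes span the image of $\bH_1(G)_K=\Hom(N^1G,\hG)_K/i^*\phi^*\bX_0(G)_K$. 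For this I would use that $N^1G\simeq\hG^g$ in the coordinates $\bx_1$ (the standard description of the kernel of $J^1G\to G$). Then $\Hom(N^1G,\hG)$ is the free $R$-module on the linear coordinates $x_{11},\dots,x_{1g}$, and these are precisely the components of $\bPsi_1$ up to the normalisation fixed in \cite{BS_b}. Consequently $\mff^*i^*\bXp(G)_K=\bH_1(G)_K$ as subspaces of $\Hd(G)_K$. Finally, we may regard $\bH_1(G)_K$ as a subspace of $\Hd(G)_K$, as Lemma \ref{com-2} implicitly does: the transition maps $u^*$ are injective, since $u^*$ commutes with $i^*\phi^*$ and $i^*$ is injective, as in the proof of Lemma \ref{com-2}.

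\textbf{Main obstacle.} I expect the hardest step to be pinning down that $\Hom(N^1G,\hG)$ is spanned by the components of $\bPsi_1$. In particular one must rule out extra additive characters of $N^1G$, which could be defined using the $\pi$-formal structure, and one must check that the $\bPsi_1$ appearing in Proposition \ref{diff} is exactly the coordinate character rather than a $\phi$-twisted variant. My first attempt would be to invoke the identification $N^1G\simeq\hG^g$ from \cite{BS_b} together with $\Hom(\hG,\hG)=R$ over the $\pi$-formal base. If some twist appears, it should be absorbed after tensoring with $K$, using the bijectivity of $\mff^*$ from Theorem \ref{bijfra-1}.
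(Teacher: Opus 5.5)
Your skeleton is the paper's. Proposition~\ref{diff}(1) gives $\mff^*[i^*\Theta]=\gamma_\Theta\cdot[\bPsi_1]$ in $\Hd(G)_K$. Theorem~\ref{GPS-1} makes $\Theta\mapsto\gamma_\Theta$ surjective onto $K^g$. Lemma~\ref{com-2} with $n=1$ then finishes. The paper just runs the two inclusions separately instead of first proving $\mff^*i^*\bXp(G)_K=\bH_1(G)_K$.

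The gap is in the one input that must come from outside: that the components of $\bPsi_1$ span $\Hom(N^1G,\hG)_K$. The paper simply asserts that $\bPsi_1$ is a $K$-basis of $\bH_1(G)_K$. Your justification is that $N^1G\simeq\hG^g$ in the Witt coordinates $\bx_1$, so that the $x_{1j}$ are characters. This is false. A point of $N^1G$ has ghost components $(0,\pi\bx_1)$, so in these coordinates the group law of $N^1G$ is $\bx\star\by=\frac{1}{\pi}F(\pi\bx,\pi\by)$, where $F$ is the formal group law of $G$. For $G=\widehat{\mathbb{G}}_{\mathrm{m}}$ (coordinate $t-1$) this is $x+y+\pi xy$, and $x_1$ is not additive. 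The characters are $\bPsi_1=\frac{1}{\pi}\log_G(\pi\bx_1)$ as in \eqref{Psi1}, and $\bx_1$ is only their linear part.

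With your identification, Proposition~\ref{diff}(1) itself would fail. There $\mff^*i^*\Theta-i^*\phi^*\Theta$ is a character, and it cannot in general equal the linear form $\gamma_\Theta\cdot\bx_1$. So the obstacle is the non-additive group law, not a $\phi$-twist, and $\Hom(\hG,\hG)=R$ cannot be applied to the coordinates $x_{1j}$.

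You can repair this in either of two ways.
\begin{itemize}
\item Invoke Buium's result quoted before \eqref{Psi1}: $\frac{1}{\pi}\log_G(\pi\bx)$ identifies $N^1G$ with $\hG^g$ (here $e(R)=1$).
\item Turn your fallback into the dimension count of Lemma~\ref{frob-bij-2}. By Theorem~\ref{bijfra-1}, $\mff^*$ is injective on $i^*\bXp(G)_K$, so by your Step 1 the image of $\bH_1(G)_K$ in $\Hd(G)_K$ contains a $g$-dimensional subspace. On the other hand, $\dim_K\Hom(N^1G,\hG)_K\le g$, because $\Psi\mapsto D\Psi\in\Lie(N^1G)^\vee_K$ is injective. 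To see this, differentiate $\Psi(\bx\star\by)=\Psi(\bx)+\Psi(\by)$ in $\by$ at $\by=0$. The Jacobian of $\by\mapsto\bx\star\by$ at $\by=0$ is invertible, so $D\Psi=0$ forces $\nabla\Psi\equiv 0$, hence $\Psi=0$ in characteristic $0$.
\end{itemize}
This upper bound is exactly what your argument was missing.

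You also do not need injectivity of the transition maps $\bH_n(G)\to\bH_{n+1}(G)$, whose one-line justification is incomplete. Lemma~\ref{com-2} is already stated for the image $\tilde{p}_1\Hom(N^1G,\hG)$ inside $\Hd(G)$.
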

\begin{proof} Let $G$ be any commutative group scheme and $$i^*\Theta\in i^*\bXp(G)_K\cap \mff^{*}i^*\bXp(G)_K.$$ Then there is a $\Theta'\in \bXp(G)_K$ such that $i^{*}\Theta=\mff^{*}i^{*}\Theta'.$ 
On the other hand by part $(1)$ of Proposition \ref{diff}, we get $\mff^{*}i^{*}\Theta'=\gamma_{\Theta'}\cdot \bPsi_1$ in $\Hd(G)_K.$  Hence $i^*\Theta\in \bH_1(G)_K.$ Then by diagram \eqref{com-1} we get that $i^*\Theta\in i^* \bX_1(G)_K/i^*\phi^*\bX_0(G)_K$.

For the other direction, let $i^*\Theta \in i^*\bX_1(G)_K/i^*\phi^{*}\bX_0(G)_K$. Then $i^{*}\Theta\in \bH_1(G)_K.$ Since $\bPsi_1$ forms a $K$-basis of $\bH_1(G)_K,$ there exists an $\alpha\in K^n$ such that $i^{*}\Theta=\alpha \cdot \bPsi_{1}.$ If $G$ is a semiabelian scheme then by \eqref{chars--diff}, we have the map $$\Upsilon: \bXp(G)_K \map H^0(G,\Omega_G)_K$$ is an isomorphism of $K$-vector spaces. 
In particular, fixing an \'{e}tale coordinate $\bf{x}_0$ of $G$, there is a $[\Theta']\in \bXp(G)_K$ such that $\Upsilon(\Theta')=\frac{1}{\pi}\alpha \cdot d\bf{x}_0.$ But by definition $\Upsilon_{K}(\Theta') =\frac{\gamma_{\Theta'}}{\pi} \cdot d\bf{x}_0$ which implies that $\gamma_{\Theta'}=\alpha$. Hence $$\mff^{*}i^{*}\Theta'=\gamma_{\Theta'}\cdot \bPsi_1=\alpha \cdot \bPsi_1=i^{*}\Theta,$$ showing that $i^*\Theta\in i^*\bXp(G)_K\cap \mff^{*}i^*\bXp(G)_K$, and we are done.
\end{proof}
\begin{lemma} \label{frob-bij-2}The map $\mff^{*}:i^*\bXp(G)_K\to \bH_1(G)_K$ is an isomorphism of $K$-vector spaces. In particular, $\dim_K \bXp(G)_K=\dim _K \bH_1(G)_K=g.$
\begin{proof} Note that Theorem \ref{bijfra-1} implies $\mff^*$ restricted to $i^*\bXp(G)_K$ is injective. Since we have  $\mff^{*}i^*\bXp(G)_K\subset \bH_1(G)_K.$ To prove that $\mff^*$ is also surjective, it is enough to show that $\dim_K \bH_1(G)_K=g.$ Indeed, $\bH_1(G)_K$ being a quotient vector space of $\Hom(N^1G,\hG)_K,$  we get $$g=\dim_K \bXp(G)_K\leq \dim _K \bH_1(G)_K\leq  \dim_K \Hom(N^1G,\hG)_K=g.$$
Therefore $\mff^*:i^*\bXp(G)_K\to \bH_1(G)_K$ is a bijection. 
\end{proof}
\end{lemma}
\begin{corollary}\label{sum} If $\bX_1(G)_{K}=\{0\}$ then $$\bH_{\d}(G)_{K}=i^*\bXp(G)_K+\mff^{*}i^*\bXp(G)_K.$$
\end{corollary}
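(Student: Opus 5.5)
The plan is to prove two inclusions. The inclusion $i^*\bXp(G)_K+\mff^{*}i^*\bXp(G)_K\subset \bH_\d(G)_K$ holds by construction. For the reverse inclusion, I will first show that $\bX_1(G)_K=\{0\}$ forces $\bH_\d(G)_K$ to have dimension $2g$. I will then show that the two $g$-dimensional subspaces $i^*\bXp(G)_K$ and $\mff^*i^*\bXp(G)_K$ meet trivially, so their sum already has dimension $2g$.

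\emph{Trivial intersection.} By Theorem \ref{order-1},
$$i^*\bXp(G)_K\cap \mff^{*}i^*\bXp(G)_K=i^*\bX_1(G)_K/i^*\phi^*\bX_0(G)_K,$$
and this quotient vanishes because $\bX_1(G)_K=\{0\}$. By Lemma \ref{frob-bij-2}, $\mff^*$ is injective on $i^*\bXp(G)_K$, so $\mff^*i^*\bXp(G)_K=\bH_1(G)_K$ has dimension $g$. Hence the sum $i^*\bXp(G)_K+\mff^*i^*\bXp(G)_K$ is direct and has dimension $2g$.

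\emph{Upper bound.} Theorem \ref{yeah} gives $\dim_K\bH_\d(G)_K\le 2g$. It then follows that $\bH_\d(G)_K$ equals the sum, and this is the entire proof provided the citation of Theorem \ref{yeah} is legitimate here.

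\emph{Main obstacle.} The difficulty is whether Theorem \ref{yeah} applies in the stated generality. For a semiabelian $G$, $\Ext(G,\hG)$ has rank equal to the abelian part dimension, not $g$, so the bound $\rk\le 2g$ may need a separate argument. If it does, I would argue directly from the exact sequence \eqref{sel}:
$$\dim_K\bH_\d(G)_K\le \dim_K\bXp(G)_K+\dim_K \bI(G)_K\le g+\dim\Ext(G,\hG)_K\le 2g.$$
Here $\bH_\d(G)/i^*\bXp(G)$ injects into $\bI(G)$, since the kernel of $\partial$ modulo $i^*\phi^*$ is exactly $i^*$ of the characters (Lemma \ref{com-2}), and the dimension of $\Ext(G,\hG)_K$ is at most $g$. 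This would recover the dimension bound without Theorem \ref{yeah}.
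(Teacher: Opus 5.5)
Your proposal is correct and is essentially the paper's own proof. The three ingredients are the same: the trivial intersection from Theorem \ref{order-1}; injectivity of $\mff^*$ on $i^*\bXp(G)_K$ from Lemma \ref{frob-bij-2}, so that $\mff^*i^*\bXp(G)_K=\bH_1(G)_K$ is $g$-dimensional; and the bound $\dim_K\bH_\d(G)_K\le 2g$ from Theorem \ref{yeah}, which the paper explicitly takes to hold for semiabelian schemes. Your fallback bound via the exact sequence $0\to\bXp(G)_K\to\bH_\d(G)_K\to\bI(G)_K\to 0$ is also valid, so the concern you raise is not a gap.
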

\begin{proof} By the definition, we have $i^*\bXp(G)_K+\mff^{*}i^*\bXp(G)_K\subset \bH_{\d}(G)_{K}$. To prove the equality, we will show that they have the same dimension as  K-vector spaces. Note that $\dim \bH_{\d}(G)_{K} \leq 2g$ and $\dim \bXp(G)_K=g$.  Moreover, since $\bX_1(G)_K=\{0\}$, by Theorem \ref{order-1} we have $$i^*\bXp(G)_K\cap \mff^{*}i^*\bXp(G)_K=i^*\bX_1(G)_K/i^*\phi^{*}\bX_0(G)_K=\{0\}.$$ Following part $(1)$ of Proposition \ref{diff}, the operator $\mff^{*}$ sends any element $[\Theta]\in \bXp(G)_K$ to $\gamma_{\Theta}.[\bPsi_{1}]\in \bH_1(G)_K\subset \Hd(G)_K.$ Then using Lemma \ref{frob-bij-2}, one can compute $$\dim_K (i^*\bXp(G)_K+\mff^{*}i^*\bXp(G)_K)=2g=\dim_K \bH_{\d}(G)_{K}$$ which completes the proof. 
\end{proof}

\subsection{Filtered subspaces of the delta isocrystal} Let us define the following filtered subspaces $\cF_{i}$ of $\bH_{\d}(G)_K$ inductively: \begin{align*}\cF_{0}&= i^*\bXp(G)_K\\
                                     \cF_{i+1}&=i^*\bXp(G)_{K}+\mff^{*} \cF_i; ~\mathrm{for~} i\geq 0
\end{align*}
\begin{lemma} \label{step-1}We have $$ \bI_{1}(G)_{K}\simeq \bH_1(G)_K/(i^*\bX_{1}(G)_K/i^*\phi^*\bX_0(G)_K)\simeq \cF_1/\cF_0.$$
\end{lemma}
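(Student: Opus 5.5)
We prove the two isomorphisms separately. Both rest on the exact sequence \eqref{sel} for $n=1$ and on the explicit identification of $\cF_1$ coming from Lemma \ref{frob-bij-2}.

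\emph{First isomorphism.} Take $n=1$ in \eqref{sel}. Since $N^1G\subset J^1G$ and $\bX_0(G)=\Hom(G,\hG)$ restricts to zero on $N^1G$, the sequence reads
\[
0\to i^*\bX_1(G)\to \Hom(N^1G,\hG)\xrightarrow{\partial}\bI_1(G)\to 0 .
\]
By definition, $\bH_1(G)=\Hom(N^1G,\hG)/i^*\phi^*\bX_0(G)_\phi$. The subgroup $i^*\phi^*\bX_0(G)_\phi$ sits inside $i^*\bX_1(G)$, because $\phi^*\Theta$ is an order-one character. Quotienting both $\Hom(N^1G,\hG)$ and $i^*\bX_1(G)$ by $i^*\phi^*\bX_0(G)_\phi$ and tensoring with $K$ (which is exact) gives
\[
\bH_1(G)_K/\bigl(i^*\bX_1(G)_K/i^*\phi^*\bX_0(G)_K\bigr)\simeq \Hom(N^1G,\hG)_K/i^*\bX_1(G)_K\simeq \bI_1(G)_K .
\]

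\emph{Second isomorphism.} By Lemma \ref{frob-bij-2}, $\mff^*i^*\bXp(G)_K=\bH_1(G)_K$. Hence
\[
\cF_1=\cF_0+\bH_1(G)_K,
\]
all taken inside $\Hd(G)_K$. The second isomorphism theorem then gives
\[
\cF_1/\cF_0\simeq \bH_1(G)_K/(\cF_0\cap \bH_1(G)_K).
\]
By Lemma \ref{com-2} with $n=1$, after tensoring with $K$ (intersections commute with the flat base change to $K$),
\[
\cF_0\cap\bH_1(G)_K=i^*\bXp(G)_K\cap \bH_1(G)_K=i^*\bX_1(G)_K/i^*\phi^*\bX_0(G)_K .
\]
This yields $\cF_1/\cF_0\simeq \bH_1(G)_K/(i^*\bX_1(G)_K/i^*\phi^*\bX_0(G)_K)$. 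Alternatively, Theorem \ref{order-1} gives the same intersection directly, using $\cF_0\cap\bH_1(G)_K=\cF_0\cap\mff^*\cF_0$.

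\emph{Main obstacle.} The delicate step is the bookkeeping that identifies the maps $\bH_1(G)\to\Hd(G)$ and $\bXp(G)\to\Hd(G)$ inside the direct limit. We need $\bH_1(G)_K\to\Hd(G)_K$ to be injective so that intersections can be computed inside $\Hd(G)_K$. I would get this from the cartesian diagram \eqref{com-1} together with the injectivity $u^*\colon\bH_n\to\bH_{n+1}$, which follows from the argument of Lemma \ref{com-2} and the injectivity of $i^*$. If that injectivity is not available over $R$, it suffices after tensoring with $K$, where dimension counts from Lemma \ref{frob-bij-2} ($\dim_K\bH_1(G)_K=g$) can confirm it.
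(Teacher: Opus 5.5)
Your proof is correct and follows essentially the paper's argument. The first isomorphism is the third isomorphism theorem applied to the $n=1$ case of \eqref{sel}. The second comes from $\cF_1=\cF_0+\mff^*\cF_0=\cF_0+\bH_1(G)_K$ (Lemma \ref{frob-bij-2}), with the kernel $\cF_0\cap\bH_1(G)_K$ identified as $i^*\bX_1(G)_K/i^*\phi^*\bX_0(G)_K$. The paper does this last step via Theorem \ref{order-1}, which is your stated alternative and itself rests on the same diagram \eqref{com-1} as Lemma \ref{com-2}; your remark that $\bH_1(G)_K\to\Hd(G)_K$ is injective (by the dimension count of Lemma \ref{frob-bij-2}) makes explicit what the paper uses implicitly when it defines $j$ by identifying $\bH_1(G)_K$ with $\mff^*i^*\bXp(G)_K$.
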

\begin{proof} The left isomorphism follows from the definition of $\bI_{1}(G)$ and the first isomorphism theorem of the short exact sequence. Hence, it is enough to prove the right isomorphism. 

By Lemma \ref{frob-bij-2}, we know $\mff^{*}:i^*\bXp(G)_K\longrightarrow \bH_{1}(G)_{K}$ is a $K$-linear isomorphism. Then consider the inclusion map $$j:\bH_1(G)_{K}\longrightarrow i^*\bXp(G)_{K}+\mff^{*}i^*\bXp(G)_{K}$$ by identifying $\bH_1(G)_{K}$ with $\mff^{*}i^*\bXp(G)_K$.  Then, composing with the quotient map, we get 
$$\overline{j}:\bH_1(G)_{K}\to \cF_1/\cF_{0}.$$
By Theorem \ref{order-1} we get   $\ker(\overline{j})=i^*\bX_1(G)_K/i^*\phi^{*}\bX_0(G)_K.$ Then $\overline{j}$ descends to the required isomorphism.
\end{proof}

\begin{lemma}\label{frob-shift} For each $n\geq 1$, we have $\bH_{1}(G)_{K}+\mff^{*}\bH_{n}(G)_{K}=\bH_{n+1}(G)_{K}.$
\end{lemma}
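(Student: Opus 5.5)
The inclusion $\bH_{1}(G)_{K}+\mff^{*}\bH_{n}(G)_{K}\subset \bH_{n+1}(G)_{K}$ is formal. Here $\bH_1(G)_K$ sits inside $\bH_{n+1}(G)_K$ through the iterated transition maps $u^*$. Also, $\mff^*:\bH_n(G)\to\bH_{n+1}(G)$ is well defined by \eqref{latfrobH}, and $\phi=\mathrm{id}$ on $R$, so it is $K$-linear after tensoring. The whole content is therefore the reverse inclusion. I will prove it already at the level of $\Hom(N^{n+1}G,\hG)_K$ and then pass to the quotients $\bH_\bullet$.

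\textbf{Step 1: a basis of $\Hom(N^{m}G,\hG)_K$.} I will use the canonical identification $N^{m}G\simeq J^{m-1}(N^1G)$ recalled in section \ref{delta-char}. Under it, the Frobenius $\mff:N^{m}G\to N^{m-1}G$ corresponds to the canonical Frobenius $\phi$ of the prolongation sequence $J^*(N^1G)$, and the projection $u$ corresponds to the truncation. Next, $N^1G$ is, in the Witt coordinates $\bx_1$, isomorphic to a $\pi$-formal vector group $\hG^{g}$, at least after inverting $\pi$ on character groups. By \cite{BS_b} its characters are spanned over $K$ by the $g$ coordinate characters, which are the entries of $\bPsi_1$. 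For $\hG$ itself, $\Hom(J^{m-1}\hG,\hG)_K$ has $K$-basis $x,\phi^*x,\dots,(\phi^{m-1})^*x$. Applying this coordinatewise, the entries of
$$\bPsi_1,\ \mff^*\bPsi_1,\ \dots,\ (\mff^{m-1})^*\bPsi_1$$
(each pulled back to $N^mG$ by the appropriate $u^*$) form a $K$-basis of $\Hom(N^mG,\hG)_K$. This uses the fact that $u$ commutes with $\mff$, so $u^*(\mff^{j})^*=(\mff^{j})^*u^*$. I would also cross-check this with the dimension count $\dim_K\Hom(N^mG,\hG)_K=mg$ from \cite{PS-2}.

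\textbf{Step 2: splitting off the order-one part.} Let $\Psi\in\Hom(N^{n+1}G,\hG)_K$. By Step 1, with $m=n+1$, it can be written as
$$\Psi=\alpha_0\cdot u^*\bPsi_1+\sum_{j=1}^{n}\alpha_j\cdot(\mff^{j})^*\bPsi_1,\qquad \alpha_j\in K^g.$$
Since $\phi=\mathrm{id}$, the operator $\mff^*$ is $K$-linear, so the sum equals $\mff^*\Psi'$ with $\Psi'=\sum_{j=1}^{n}\alpha_j(\mff^{j-1})^*\bPsi_1\in\Hom(N^nG,\hG)_K$. Hence
$$\Hom(N^{n+1}G,\hG)_K=u^*\Hom(N^1G,\hG)_K+\mff^*\Hom(N^nG,\hG)_K.$$
Finally, I project to $\bH_{n+1}(G)_K$. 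The quotient map intertwines $u^*$ and $\mff^*$ with their induced versions on $\bH_1$, $\bH_n$ and $\bH_{n+1}$, as recorded in section \ref{delta-iso}. This gives $\bH_{n+1}(G)_K\subset\bH_1(G)_K+\mff^*\bH_n(G)_K$.

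\textbf{Main obstacle.} I expect the hard part to be Step 1: justifying that, under $N^{m}G\simeq J^{m-1}(N^1G)$, the characters are spanned over $K$ by the $\mff^*$-iterates of $\bPsi_1$. This requires the compatibility of $\mff$ with the Witt-vector Frobenius on $J^{m-1}(N^1G)$, which \cite{BS_b, PS-2} supply via \eqref{phi-fra}. It also requires that $N^1G$ behaves like $\hG^g$ for characters after tensoring with $K$. If the explicit basis is awkward to verify, my fallback is an induction on $n$ with a dimension count. The maps $\mff^*$ are injective on $\Hom(N^\bullet G,\hG)_K$, and $u^*\Hom(N^1G,\hG)_K\cap\mff^*\Hom(N^nG,\hG)_K=0$ in $\Hom(N^{n+1}G,\hG)_K$ by comparing leading Witt coordinates. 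So the right-hand side of the splitting has dimension $g+ng=(n+1)g$, which equals $\dim_K\Hom(N^{n+1}G,\hG)_K$, and the splitting follows.
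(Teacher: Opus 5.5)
Your proposal is correct and follows the paper's route: first obtain the identity $\Hom(N^1G,\hG)+\mff^*\Hom(N^nG,\hG)=\Hom(N^{n+1}G,\hG)$, then pass it through the quotient maps onto the $\bH_\bullet(G)_K$. The only difference is that the paper cites Proposition 5.2 of \cite{BS_b} for this identity (integrally), while your Steps 1--2 re-derive its $K$-linear version from the basis $\{(\mff^{j})^*\bPsi_1\}_{0\le j\le n}$, obtained through $N^{n+1}G\simeq J^{n}(N^1G)$.
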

\begin{proof} Note that by Proposition $5.2$ in \cite{BS_b}, we have $$\Hom(N^1G,\hG)+\mff^*\Hom(N^nG,\hG)=\Hom(N^{n+1}G,\hG).$$ Then, passing to the respective quotient maps, we can derive the proof of the lemma. 
\end{proof}
\begin{lemma}\label{frob-n} The image of the map $\mff^{*}: \cF_{n}\longrightarrow \bH_{\d}(G)_{K}$ is $\bH_{n+1}(G)_{K}$. In particular, $\mff^*:  \cF_{n}\to \bH_{n+1}(G)_{K}$ is an isomorphism of $K$-vector spaces.
\end{lemma}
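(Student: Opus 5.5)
We prove the statement by induction on $n\geq 0$. The two claims are that $\mff^*(\cF_n)=\bH_{n+1}(G)_K$, and that the restriction $\mff^*:\cF_n\to\bH_{n+1}(G)_K$ is an isomorphism. For injectivity nothing needs to be done: by Theorem \ref{bijfra-1}, $\mff^*$ is a bijection on all of $\Hd(G)_K$, so its restriction to $\cF_n$ is injective. Once the image is identified, the isomorphism statement follows at once. So the real content is the equality of the image with $\bH_{n+1}(G)_K$.

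The base case $n=0$ is Lemma \ref{frob-bij-2}, which gives $\mff^*(\cF_0)=\mff^*i^*\bXp(G)_K=\bH_1(G)_K$.

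For the inductive step, assume $\mff^*(\cF_{n-1})=\bH_n(G)_K$ for some $n\geq 1$. Since $\cF_n=i^*\bXp(G)_K+\mff^*\cF_{n-1}$ and $\mff^*$ is additive (indeed $K$-linear here, as $\phi=\mathrm{id}$), we get
$$\mff^*(\cF_n)=\mff^*i^*\bXp(G)_K+\mff^*\bigl(\mff^*\cF_{n-1}\bigr).$$
The first summand is $\bH_1(G)_K$ by Lemma \ref{frob-bij-2}. The second is $\mff^*\bH_n(G)_K$ by the inductive hypothesis. Hence
$$\mff^*(\cF_n)=\bH_1(G)_K+\mff^*\bH_n(G)_K,$$
and Lemma \ref{frob-shift} identifies the right-hand side with $\bH_{n+1}(G)_K$. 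This completes the induction.

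The main point needing care is bookkeeping inside $\Hd(G)_K$. All the spaces $\bH_m(G)_K$ must be regarded as subspaces of $\Hd(G)_K$ via the transition maps $u^*$, and the map $\mff^*$ of \eqref{latfrobH} must be compatible with these maps. We need both facts so that the equalities from Lemma \ref{frob-shift}, which are stated at the level of the $\bH_m$, can be read as equalities of subspaces of $\Hd(G)_K$.

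This requires that $u^*:\bH_m(G)_K\to\Hd(G)_K$ be injective after tensoring with $K$. I would argue this by tensoring the limit with $K$, which commutes with direct limits. Alternatively, I would work throughout with images in $\Hd(G)_K$ and note that Lemma \ref{frob-shift} then holds for those images, because taking images commutes with sums and with the compatible map $\mff^*$.
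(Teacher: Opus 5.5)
Your proof is correct and follows the paper's argument: induction with base case Lemma \ref{frob-bij-2}, and the inductive step $\mff^*\cF_n=\mff^*i^*\bXp(G)_K+\mff^*\bH_n(G)_K=\bH_1(G)_K+\mff^*\bH_n(G)_K=\bH_{n+1}(G)_K$ via Lemmas \ref{frob-bij-2} and \ref{frob-shift}, with injectivity from Theorem \ref{bijfra-1}, which the paper leaves implicit. On bookkeeping, the fact that tensoring commutes with direct limits does not by itself show that $\bH_m(G)_K\to\Hd(G)_K$ is injective; your fallback of working with images in $\Hd(G)_K$ is sufficient, and it is how the statement has to be read anyway.
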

\begin{proof} We will show this by induction on $n$. The case $n=0$ is true by Lemma \ref{frob-bij-2}. Let us assume the assertion is true for $n-1$, and we will show it is true for $n$. By assumption, we have $\mff^{*}\cF_{n-1}=\bH_{n}(G)_{K}$. 
Therefore, we have
\begin{align*}\mff^*\cF_n&=\mff^*i^*\bXp(G)_K+\mff^*(\mff^*\cF_{n-1})~[\mathrm{by~ the ~definition~ of}~ \cF_n]\\
&= \mff^*i^*\bXp(G)_K+\mff^*\bH_n(G)_K ~ [\mathrm{by~ induction ~hypothesis}]\\
&=\bH_1(G)_K+\mff^*\bH_n(G)_K ~[ \mathrm{Lemma} ~\ref{frob-bij-2}]\\
&=\bH_{n+1}(G)_K. ~[ \mathrm{Lemma} ~\ref{frob-shift}]
\end{align*} 
This completes the proof. 
\end{proof}
\begin{lemma}\label{order-n} We have $$i^*\bX_{n+1}(G)_{K}/i^*\phi^{*}\bX_{n}(G)_{K}=i^*\bXp(G)_{K} \cap \mff^{*}\cF_{n}$$
\end{lemma}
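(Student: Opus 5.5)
By Lemma \ref{frob-n}, $\mff^*\cF_n=\bH_{n+1}(G)_K$. The right-hand side of the statement is therefore $i^*\bXp(G)_K\cap \bH_{n+1}(G)_K$, where $\bH_{n+1}(G)_K$ is viewed inside $\Hd(G)_K$ via the maps $u^*$. This is exactly the situation of Lemma \ref{com-2} with $n$ replaced by $n+1$, after tensoring with $K$. That lemma gives
$$i^*\bX_{n+1}(G)/i^*\phi^*\bX_{n}(G)=i^*\bXp(G)\cap \bH_{n+1}(G).$$
So the plan is to substitute Lemma \ref{frob-n} into the intersection and then invoke Lemma \ref{com-2} at level $n+1$.

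The main point to check is that Lemma \ref{com-2} survives the passage to $K$-coefficients. Equivalently, its proof must go through verbatim for the $K$-vector spaces $\bX_m(G)_K$, $\Hom(N^mG,\hG)_K$ and $\bH_m(G)_K$. I would argue directly rather than through a flatness statement. Take $[\Theta]\in i^*\bXp(G)_K\cap \bH_{n+1}(G)_K$. Clearing denominators with a power of $\pi$, we may assume $\pi^r[\Theta]$ lies in the integral intersection. Applying the integral Lemma \ref{com-2} shows that $\pi^r[\Theta]$ lies in $i^*p_{n+1}\bX_{n+1}(G)$, and dividing back by $\pi^r$ in $K$ gives the inclusion "$\supseteq$". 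Alternatively, one can rerun the argument of Lemma \ref{com-2} with $K$-coefficients: write $i^*\Theta_m-\Psi=i^*\phi^*\Theta'$ with $m>n+1$, and use that $i^*$ stays injective after tensoring with the field $K$, since localization is exact.

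The reverse inclusion "$\subseteq$" is immediate from the commutativity of diagram \eqref{com-1}. Indeed, $i^*\bX_{n+1}(G)_K$ maps into $\Hom(N^{n+1}G,\hG)_K$ and hence into $\bH_{n+1}(G)_K$. It also maps into $i^*\bXp(G)_K$ through $p_{n+1}$.

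As a sanity check, for $n=0$ the statement reduces to Theorem \ref{order-1}, since $\cF_0=i^*\bXp(G)_K$ and $\mff^*\cF_0=\bH_1(G)_K$ by Lemma \ref{frob-bij-2}. The expected obstacle is only bookkeeping. The identifications of $\bH_{n+1}(G)_K$ and $i^*\bXp(G)_K$ as subspaces of $\Hd(G)_K$ must be the ones used in Lemma \ref{frob-n}. In particular, the transition maps $u^*:\bH_m(G)\to\bH_{m+1}(G)$ must be injective after $\otimes K$, so that the intersection makes sense inside $\Hd(G)_K$.
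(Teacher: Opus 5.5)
Your proposal is correct and follows the paper's proof: substitute $\mff^*\cF_n=\bH_{n+1}(G)_K$ from Lemma~\ref{frob-n} into the intersection, then apply Lemma~\ref{com-2} at level $n+1$. Your extra care about passing to $K$-coefficients (clearing denominators and injectivity of $i^*$ after localization) is bookkeeping that the paper leaves implicit.
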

\begin{proof} By Lemma \ref{frob-n}, we get $$i^*\bXp(G)_{K} \cap \mff^{*}\cF_{n}= i^*\bXp(G)_{K} \cap \bH_{n+1}(G)_{K}.$$ 

On the other hand by Lemma \ref{com-2}, we get $$i^*\bXp(G)_{K} \cap \bH_{n+1}(G)_{K}=i^*\bX_{n+1}(G)_{K}/i^*\phi^{*}\bX_{n}(G)_{K}.$$ Combining the two equalities, we are done.
\end{proof}

\begin{lemma}\label{short-n} For any $n\geq 0$, we have $$ \bI_{n+1}(G)_{K}\simeq \bH_{n+1}(G)_K/ (i^*\bX_{n+1}(G)_K/i^*\phi^{*} \bX_{n}(G)_{K})\simeq \cF_{n+1}/ \cF_0.$$
\end{lemma}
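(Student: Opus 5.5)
The plan mirrors the proof of Lemma \ref{step-1}, now using the map of Lemma \ref{frob-n} in place of Lemma \ref{frob-bij-2}. The left isomorphism $\bI_{n+1}(G)_K\simeq \bH_{n+1}(G)_K/(i^*\bX_{n+1}(G)_K/i^*\phi^*\bX_n(G)_K)$ is formal. In the exact sequence \eqref{sel} (with $n+1$ in place of $n$), $\bI_{n+1}(G)$ is the image of $\partial$, so $\bI_{n+1}(G)\simeq \Hom(N^{n+1}G,\hG)/i^*\bX_{n+1}(G)$. I would then divide numerator and denominator by the common submodule $i^*\phi^*(\bX_n(G)_\phi)$, which is contained in $i^*\bX_{n+1}(G)$. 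This gives $\bI_{n+1}(G)\simeq \bH_{n+1}(G)/(i^*\bX_{n+1}(G)/i^*\phi^*\bX_n(G))$, and tensoring with $K$ (which is exact) finishes this part.

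For the right isomorphism I would construct a surjection $\bH_{n+1}(G)_K\to \cF_{n+1}/\cF_0$ and compute its kernel. By Lemma \ref{frob-n}, $\mff^*:\cF_n\to \bH_{n+1}(G)_K$ is an isomorphism, so $\bH_{n+1}(G)_K=\mff^*\cF_n$ as subspaces of $\Hd(G)_K$. By definition $\cF_{n+1}=\cF_0+\mff^*\cF_n$. So the inclusion $j:\bH_{n+1}(G)_K=\mff^*\cF_n\hookrightarrow \cF_{n+1}$, followed by the quotient to $\cF_{n+1}/\cF_0$, is surjective. Call this composite $\overline{j}$. Its kernel is $\mff^*\cF_n\cap \cF_0=i^*\bXp(G)_K\cap \mff^*\cF_n$. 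By Lemma \ref{order-n} this equals $i^*\bX_{n+1}(G)_K/i^*\phi^*\bX_n(G)_K$. The first isomorphism theorem then gives the claim.

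The only point requiring care is to check that the subspace $i^*\bX_{n+1}(G)_K/i^*\phi^*\bX_n(G)_K$ of $\bH_{n+1}(G)_K$ is the same subspace of $\Hd(G)_K$ that appears in Lemma \ref{order-n}. This needs two facts: that $\bH_{n+1}(G)_K\to \Hd(G)_K$ is injective, and that Lemma \ref{com-2} identifies this quotient with $i^*\bXp(G)_K\cap\bH_{n+1}(G)_K$. Both are already implicitly used in Lemmas \ref{frob-n} and \ref{order-n}, so I expect no real obstacle. The argument is essentially bookkeeping once Lemmas \ref{frob-n} and \ref{order-n} are in hand.
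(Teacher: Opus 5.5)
Your proposal is correct and matches the paper's proof. The paper also takes the left isomorphism as formal from \eqref{sel}. For the right one it identifies $\bH_{n+1}(G)_K$ with $\mff^*\cF_n$ via Lemma \ref{frob-n}, maps it into $\cF_{n+1}/\cF_0$, and reads off the kernel from Lemma \ref{order-n}. You additionally make explicit two steps the paper leaves implicit: surjectivity, from $\cF_{n+1}=\cF_0+\mff^*\cF_n$, and the third-isomorphism step for the left isomorphism.
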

\begin{proof} Define the inclusion map $j: \bH_{n+1}(G)_K\to \cF_{n+1}$ by identifying $\bH_{n+1}(G)_K$ with $\mff^*\cF_n$ under the isomorphism in Lemma \ref{frob-n}. Then composing with the quotient  we get the map $$\overline{j}: \bH_{n+1}(G)_K\to \cF_{n+1}/\cF_{0}.$$ Note that $\ker(\overline{j})=i^*\bX_{n+1}(G)_K/i^*\phi^{*} \bX_{n}(G)_{K}$  by Lemma \ref{order-n}. Therefore $\overline{j}$ descends to the required isomorphism.
\end{proof}

\begin{theorem}\label{delta-crys-main} Let $G$ be a $\pi$-formal semiabelian scheme then $$\bH_{\d}(G)_K=\cF_{m_u-1},$$ where $m_u$ is the upper splitting number of $G$.
\end{theorem}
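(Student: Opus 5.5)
We prove $\bH_{\d}(G)_K=\cF_{m_u-1}$ by comparing dimensions, using Lemma \ref{short-n} to compute $\dim_K\cF_{n}$ for every $n$ in terms of the ranks of $\bI_n(G)$.

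First, Lemma \ref{short-n} gives $\cF_{n+1}/\cF_0\simeq \bI_{n+1}(G)_K$ for all $n\ge 0$. Since $\cF_0=i^*\bXp(G)_K$ has dimension $g$ by Theorem \ref{yeah},
$$\dim_K\cF_{n}=g+\rk\,\bI_{n}(G)\qquad (n\ge 1).$$
The $\cF_n$ form an increasing chain: $\cF_0\subset\cF_1$ by definition, and if $\cF_{n-1}\subset\cF_n$ then applying $\mff^*$ and adding $\cF_0$ gives $\cF_n\subset\cF_{n+1}$. By definition of $m_u$, $h_n=0$ for all $n\ge m_u$, so $\rk\,\bI_n(G)=\rk\,\bI_{m_u-1}(G)$ for every $n\ge m_u-1$. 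Hence $\dim\cF_n$ is constant for $n\ge m_u-1$, and therefore $\cF_n=\cF_{m_u-1}$ for all $n\ge m_u-1$. (If $m_u=1$, we read $\cF_0$ and $\bI_0$ as the base of the chain; that case needs a small separate check.)

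Next, $\bH_\d(G)_K=\bigcup_n \bH_n(G)_K$, since $\Hd(G)$ is defined as the direct limit in \eqref{bigH}. By Lemma \ref{frob-n}, $\bH_{n+1}(G)_K=\mff^*\cF_n\subset\cF_{n+1}$. So every element of $\bH_\d(G)_K$ lies in some $\cF_N$, and by the stabilization above $\cF_N\subset\cF_{m_u-1}$ once $N\ge m_u-1$. This gives $\bH_\d(G)_K\subset\cF_{m_u-1}$. The reverse inclusion holds because each $\cF_i$ lies in $\bH_\d(G)_K$: it is built from $i^*\bXp(G)_K$ and $\mff^*$, which preserves $\bH_\d(G)_K$.

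The main obstacle is making the bookkeeping of the splitting-number index precise. One must check that $m_u$, defined as the least $m\ge1$ with $h_n=0$ for all $n\ge m$, really means $\bI_{m_u-1}=\bI_{m_u}=\cdots$ rationally. One must also handle the boundary case $m_u=1$, where the claim becomes $\bH_\d(G)_K=\cF_0=\bXp(G)_K$. Here one would use that $\bI_n$ stabilizes from index $0$: then $\cF_1/\cF_0\simeq\bI_1(G)_K=\bI_0(G)_K$, which should vanish, since $\Hom(N^0G,\hG)=0$ as $N^0G$ is trivial. So $\cF_1=\cF_0$, and the argument above goes through.
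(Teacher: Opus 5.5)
Your proof is correct, and it uses the same key input as the paper's proof: Lemma \ref{short-n}, together with the fact that $\bI_n(G)_K$ stops growing at $n=m_u-1$. The closing step is different.

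The paper views $0\to\bXp(G)_K\to\cF_n\to\bI_n(G)_K\to 0$ as a subsequence of the limit sequence $0\to\bXp(G)_K\to\Hd(G)_K\to\bI(G)_K\to 0$. It then concludes directly from $\bI_{m_u-1}(G)_K=\bI(G)_K$.

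You use only the dimension consequence $\dim_K\cF_n=g+\rk\,\bI_n(G)$. From it you get that the increasing chain $\cF_n$ stabilizes at $n=m_u-1$. You then show that the $\cF_n$ exhaust $\Hd(G)_K$ using $\bH_{n+1}(G)_K=\mff^*\cF_n\subset\cF_{n+1}$ from Lemma \ref{frob-n}.

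Your route has a small advantage. The paper's diagram tacitly needs the isomorphism $\cF_n/\cF_0\simeq\bI_n(G)_K$ of Lemma \ref{short-n} to be compatible with the projection $\Hd(G)_K\to\bI(G)_K$. It is compatible, since $\cF_n=\cF_0+\bH_n(G)_K$ and $\cF_0$ is the kernel, but the paper does not check this. Your argument never needs that compatibility. The paper's version is shorter.

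The case $m_u=1$ needs no separate treatment. Since $N^0G=0$, we have $\bI_0(G)=0$, so your dimension formula already holds for $n=0$.
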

\begin{proof} For each $n\geq 1$, Lemma \ref{short-n} gives rise to the following inclusion between the short exact sequences
\begin{align*}
\xymatrix{
 0\ar[r] & \bXp(G)_K \ar@{=}[d] \ar[r] &\cF_n  \ar@{^{(}->}[d]\ar[r]& \bI_{n}(G)_K\ar@{^{(}->}[d]\ar[r] &0 \\
  0\ar[r]& \bXp(G)_K \ar[r]& \Hd(G)_K \ar[r]& \bI(G)_K \ar[r] &0
}
\end{align*}
Then by the definition of upper splitting number from Section $7$ in \cite{BS_b}, we have $\bI_{m_u-1}(G)_K=\bI(G)_K,$ which implies $\bH_{\d}(G)_K=\cF_{m_u-1},$ following the diagram above.
\end{proof}
\noindent As a corollary, we obtain a generalisation of a result  known for abelian schemes due to Buium (cf. part $(1), (2)$ of Proposition $3.2$ in \cite{bui95}).
\begin{corollary}\label{semiab-CL} Let $G$ be a semiabelian scheme over $R$. 
\begin{itemize}
\item[(1)]Then $G$ is CL if and only if $\bXp(G)=\bX_1(G)_K/\phi^{*}\bX_0(G)_K.$ \vspace{0.2cm}
\item[(2)] If $\bX_1(G)_K=\{0\}$, then $\bXp(G)_K=\bX_2(G)_K.$
\end{itemize}
\end{corollary}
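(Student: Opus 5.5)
For part (1), the plan is to work with the short exact sequence \eqref{short1} for $n=1$ and show that $G$ is CL exactly when $0\to N^1G\to J^1G\to G\to 0$ splits. Splitting should in turn be equivalent to the vanishing of $\bI_1(G)$, i.e.\ to the connecting map $\partial$ in \eqref{sel} being zero.

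\emph{($\Rightarrow$).} Suppose $\beta\in\End(G)$ lifts the $q$-power Frobenius. Then $G\xleftarrow{}G\xleftarrow{}G\cdots$ with all Frobenius lifts equal to $\beta$ is a prolongation sequence, because $\phi=\mathrm{id}$ on $R$ and $G$ is flat. By Theorem \ref{Uni-jet}, the identity of $G$ corresponds to a morphism of prolongation sequences $G^*\to J^*G$. Its first component $s:G\to J^1G$ is a section of $u$. I would check it is a homomorphism of groups by applying uniqueness in Theorem \ref{Uni-jet} to $G\times G$, using that $\beta$ is a homomorphism. So \eqref{short1} splits for $n=1$. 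Hence $\Hom(J^1G,\hG)\to\Hom(N^1G,\hG)$ is surjective and $\partial=0$ on $\Hom(N^1G,\hG)$, so $\bI_1(G)=0$. By Lemma \ref{step-1} this gives $\cF_1=\cF_0$. Equivalently, $i^*\bX_1(G)_K/i^*\phi^*\bX_0(G)_K=\bH_1(G)_K$, which has dimension $g=\dim\bXp(G)_K$ by Lemma \ref{frob-bij-2}. By Lemma \ref{com-2} this quotient sits inside $i^*\bXp(G)_K$, so the equality $\bXp(G)_K=\bX_1(G)_K/\phi^*\bX_0(G)_K$ follows.

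\emph{($\Leftarrow$).} Suppose the equality holds. Lemma \ref{step-1} then gives $\bI_1(G)_K=0$. Since $\bI_1(G)\subset\Ext(G,\hG)$ and the latter is free, we get $\bI_1(G)=0$, i.e.\ every additive character of $N^1G$ extends to $J^1G$. To turn this into a splitting I would use that $N^1G\simeq\hG^g$ for smooth $G$, via the Witt coordinates $\bx_1$ and Buium's description of $N^1G$. Then the extension class of \eqref{short1} in $\Ext(G,N^1G)\simeq\Ext(G,\hG)^g$ has all components equal to $\partial$ of the coordinate characters, hence it is zero. Choose a splitting $s:G\to J^1G$ and set $\beta:=\phi\circ s:G\to G$. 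This is an endomorphism, and since $\phi:J^1G\to G$ reduces mod $\pi$ to Frobenius composed with $u$, $\beta$ lifts Frobenius; so $G$ is CL.

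The main obstacle is this converse direction: justifying $N^1G\simeq\hG^g$ as group schemes, so that vanishing of $\partial$ on characters kills the whole extension class, and checking that $\phi\circ s$ really reduces to Frobenius.

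For part (2), assume $\bX_1(G)_K=0$, so in particular $\phi^*\bX_1(G)_K=0$. Apply Lemma \ref{order-n} with $n=1$:
$$i^*\bX_2(G)_K=i^*\bXp(G)_K\cap\mff^*\cF_1.$$
By Corollary \ref{sum}, $\cF_1=\Hd(G)_K$. By Theorem \ref{bijfra-1}, $\mff^*$ is bijective on $\Hd(G)_K$, so $\mff^*\cF_1=\Hd(G)_K$. The intersection is therefore $i^*\bXp(G)_K$, and injectivity of $i^*$ gives $\bXp(G)_K=\bX_2(G)_K$.
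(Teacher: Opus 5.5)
\paragraph{Part (1) and part (2).} For part (1) you follow the paper's route, in more detail. The paper derives (1) in one line from Theorem~\ref{order-1} together with the assertion that $G$ is CL if and only if $0\to N^1G\to J^1G\to G\to 0$ splits. You actually prove that assertion. One direction uses the constant prolongation sequence with Frobenius $\beta$ and Theorem~\ref{Uni-jet}, applied also to $G\times G$ for the homomorphism property. The other uses $\beta=\phi\circ s$, which reduces to Frobenius because $\phi^*f\equiv u^*(f)^q \bmod \pi$. You then pass between splitting and the character condition through $\bI_1(G)$ and Lemmas~\ref{com-2}, \ref{frob-bij-2}, \ref{step-1}, which repackage Theorem~\ref{order-1}.

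For part (2) your route is different and more self-contained. The paper goes through the splitting numbers ($m_l\ge 2$, $m_u=2$) and tacitly uses Borger--Saha's description of $\bXp(G)$ in terms of them. You instead show directly that $\bH_2(G)_K=\mff^*\cF_1=\mff^*\Hd(G)_K=\Hd(G)_K$ (Corollary~\ref{sum}, Theorem~\ref{bijfra-1}) and read off $\bX_2(G)_K=\bXp(G)_K$ from Lemma~\ref{order-n}, using only the paper's own lemmas. Part (2) and the implication ``CL $\Rightarrow$ $\bXp(G)_K=\bX_1(G)_K/\phi^*\bX_0(G)_K$'' are correct as written.

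\paragraph{The gap.} The gap is exactly the step you flagged in the converse of (1): $N^1G\simeq\hG^g$ does not hold for every smooth $G$.

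What is true is that $\bPsi_1=\frac1\pi\log_G(\pi\bx)$ and $\frac1\pi\exp_G(\pi\by)$ are mutually inverse isomorphisms when $e(R)<p-1$. In that case all coefficients in degree $\ge 2$ have positive valuation, so both series are integral and congruent to the identity mod $\pi$. For $R=W(\mathbb{F}_q)$ this means $p$ odd. With that justification and the added hypothesis $p\neq 2$, your converse is complete.

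For $p=2$ the isomorphism fails. With $\pi=2$, $N^1\hat{\mathbb{G}}_{\mathrm{m}}$ has group law $x+y+2xy$ and contains the $2$-torsion point $x=-1$. In general, $\bI_1(G)=0$ only shows that the class of $J^1G$ in $\Ext(G,N^1G)$ comes from $\Ext(G,\ker\bPsi_1)$.

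This obstruction really occurs. Take $p=2$, let $E'$ be a canonical lift over $\Z_2$ of an ordinary elliptic curve, and set $E=E'/H$, where $H\subset E'[2]\simeq\mu_2\times\Z/2$ is the graph of the nontrivial homomorphism $\Z/2\to\mu_2$. This homomorphism exists over $\Z_2$ because $-1\in\Z_2$.
\begin{itemize}
\item $E$ is not CL. A splitting of $E[2^\infty]$ would require a homomorphism from the \'etale part to $\mu_{2^\infty}$ extending this map, and there is none since $\sqrt{-1}\notin W(\bar{\mathbb{F}}_2)$. A Frobenius lift would split $E[2^\infty]$, because its characteristic polynomial has one unit and one non-unit root.
\item $\bX_1(E)\neq 0$. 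With $f\colon E'\to E$ the quotient and $\hat f$ its dual, any $0\neq\Theta'\in\bX_1(E')$ gives $\Theta'\circ J^1\hat f\circ J^1 f=2\Theta'\neq 0$. Hence $\bXp(E)_K=\bX_1(E)_K$.
\end{itemize}
So the converse of (1) genuinely needs $p$ odd. The paper's one-line proof passes over exactly the point you isolated.
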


\begin{proof} Part $(1)$ follows from Theorem \ref{order-1}  and the fact that $G$ is CL if and only if the exact sequence of group schemes $$0\map N^1G\map J^1G\map G\map 0$$
splits.

 For part (2), note that $\bX_1(G)_K=\{0\}$ implies $m_l\geq2$. Also, by Theorem \ref{delta-crys-main} and Corollary \ref{sum}, we have $m_u=2$. Therefore $\bXp(G)_K=\bX_2(G)_K.$
\end{proof}
In what follows we will understand $\bXp(G)$ is an $R$-submodule of $\Hd(G)$ without mentioning the inclusion map $i^*.$

\section{Universal vectorial extension and first jet space of abelian schemes}\label{S3}
Let $A$ be a $\pi$-formal abelian scheme over $R,$ which will be fixed throughout. In this section, we establish a precise relation between the first jet space $J^1A$ of an abelian scheme $A$ and its universal vectorial extension $E(A).$ We also show that the Frobenius $\phi:J^1A\to A$ is compatible with the crystalline Frobenius on $E(A).$
\subsection{Universal vectorial extension}Let $A$ be an abelian scheme over $R.$ Then the theory of the universal vectorial extensions following \cite{MazMess} gives a short exact sequence of group schemes
\[ 0\to V(A)\to E(A)\to A\to 0
\]
where $V(A)=\bH^1(A,\cO_A)^{\vee}$ and $E(A)$ is the universal vectorial extension of $A.$ Moreover, it is known that  $\Lie (E(A))^{\vee}\simeq \Hdr(A),$ and the following exact sequence at the level of dual Lie algebras is isomorphic to the Hodge sequence associated to $A$ (cf. page 328 in \cite{Raynaud83b})

 
\[
\xymatrix{
	0 \ar[r] &  \Lie(A)^{\vee}\ar[d]^-{\simeq} \ar[r] &  \Lie(E(A))^{\vee} \ar[d]^-{\simeq} \ar[r] &\Lie(V(A))^{\vee}\ar[d]^-{\simeq} \ar[r] & 0 \\
	0 \ar[r] & H^0(A,\Omega_A) \ar[r] &\bH^1_{\mathrm{dR}}(A) \ar[r] & H^1(A,\Ou_A) \ar[r] & 0.
	}
	\]
We will use these identifications in the rest of the article without mentioning repeatedly.
\subsubsection{Crystalline structure} Let $F:A_0\to A_0$ be the $q$-th power Frobenius. The crystalline structure of universal vectorial extension gives rise to an absolute  endomorphism 
\[\EE(F): E(A)\to E(A)
\]
such that the following diagram commutes (cf. page 337 in \cite{Raynaud83b})
\begin{align}\label{frob-univ}\xymatrix{
E(A_0)\ar[d]^{E(F)}\ar@{=}[r]&E(A)_0\ar[d]^{\EE(F)_0}\ar[r]& A_0\ar[d]^F\\
E(A_0)\ar@{=}[r]&E(A)_0\ar[r] & A_0
}
\end{align}
Moreover, under the identification $\Lie(E(A))^{\vee}\simeq \Hdr(A)$, the absolute endomorphism $\EE(F)$ induces the crystalline Frobenius $\Fc:\Hdr(A)\to \Hdr(A)$ (cf. page 330 in \cite{Raynaud83b}).

\subsection{Frobenius compatibility of $E(A)$ and $J^1A$} 
 In what follows we abbreviate the notation $N^1A$ as $N^1$. The commutative diagram \eqref{frob-univ} shows that the composition map $$E(A) \xrightarrow{\EE(F)} E(A) \xrightarrow{u_e} A$$ is a lift of the Frobenius $E(A_0)\xrightarrow{\ue} A_0\xrightarrow{F}A_0$ relative to the projection map $u_e: E(A)\to A.$ 
Since $R$ is $\pi$-torsion free and $E(A)$ is smooth over $R,$ the sheaf of $R$-modules $\Ou_{E(A)}$ is also $\pi$-torsion free. Consequently, the two maps $$(u_e, u_e\circ\EE(F)): E(A)\to A$$ induce a prolongation. Therefore by the universal property (cf. Theorem \ref{Uni-jet}) of the first jet space $J^1A$, we have  a unique map of prolongations $$\xi: E(A)\to J^1A.$$ In other words, we have  the following diagram 

\begin{align}\label{jet-univ-1}\xymatrix{
0\ar[r]& V(A)\ar[d]^{\chi}\ar[r]^{i_e}&E(A)\ar[d]^{\xi}\ar[r]^{u_e}& A\ar@{=}[d]\ar[r]&0\\
0\ar[r]& N^1\ar[r]&J^1A\ar[r]^{u} & A\ar[r]&0
}
\end{align}
commutes and $u_e\circ\EE(F)=\phi \circ \xi.$

Given a character $\Psi\in \Hom_R(A,\hG)$ we get the following exact sequence by push-forward
 $$\xymatrix{0\ar[r]& N^1\ar[d]^{\Psi}\ar[r]&J^1A\ar[d]^{g_{\Psi}}\ar[r]^{u} & A\ar@{=}[d]\ar[r]&0\\
 0\ar[r]& \hG^g\ar[r]&A^*_{\Psi}\ar[r] & A\ar[r]&0
}
 $$
 On the other hand, the universal property of $E(A)$ implies that: there is a unique map $\chi_{\Psi}: V(A)\to \hG$ such that $A^*_{\Psi}$ is the push forward of $E(A)$ by $\chi_{\Psi}.$ Therefore we have,
\begin{align}\label{jet-univ-2}\xymatrix{
0\ar[r]& V(A)\ar[d]^{\chi_{\Psi}}\ar[r]&E(A)\ar[d]^{\xi_{\Psi}}\ar[r]& A\ar@{=}[d]\ar[r]&0\\
0\ar[r]& \hG\ar[r]&A^*_{\Psi}\ar[r] & A\ar[r]&0
}
\end{align}

\begin{lemma} \label{frob-jet-univ} Let $\Psi\in \Hom_R(N^1,\hG)$ be a character. Then the following  two maps coincide $$g_{\Psi}\circ\xi=\xi_{\Psi}: E(A)\to A^*_{\Psi}.$$ 
\end{lemma}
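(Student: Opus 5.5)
We reduce the claim to the universal property of $E(A)$: an extension of $A$ by a vector group $W$ is the push-forward of $E(A)$ along a \emph{unique} homomorphism $V(A)\to W$, and the morphism of extensions realizing it is unique as well. So we exhibit $g_{\Psi}\circ\xi$ as a morphism of extensions from $E(A)$ to $A^*_{\Psi}$ inducing the identity on $A$, and then show that its restriction to $V(A)$ is $\chi_{\Psi}$.

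First, we check that $g_{\Psi}\circ\xi$ is a morphism of extensions. By \eqref{jet-univ-1}, $\xi$ is a homomorphism of group schemes: a map of prolongations out of the group object $E(A)$, via uniqueness in Theorem \ref{Uni-jet} applied to $E(A)\times E(A)$. It satisfies $u\circ\xi=u_e$ and $\xi\circ i_e=\iota\circ\chi$. The push-out map $g_{\Psi}$ satisfies $(A^*_\Psi\to A)\circ g_\Psi=u$, and $g_\Psi\circ\iota$ equals $\Psi$ followed by the inclusion $\hG\to A^*_\Psi$. Hence
$$g_{\Psi}\circ\xi\circ i_e=(\hG\hookrightarrow A^*_\Psi)\circ(\Psi\circ\chi),\qquad (A^*_\Psi\to A)\circ g_\Psi\circ\xi=u_e .$$
Therefore $g_\Psi\circ\xi$ is a morphism of extensions over $\mathrm{id}_A$, with kernel component $\Psi\circ\chi:V(A)\to\hG$. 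This shows that $A^*_\Psi$ is the push-forward of $E(A)$ along $\Psi\circ\chi$. By the uniqueness in the universal property we get $\chi_\Psi=\Psi\circ\chi$.

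Second, we identify the two maps. Both $\xi_\Psi$ and $g_\Psi\circ\xi$ are morphisms of extensions $E(A)\to A^*_\Psi$ with the same kernel component $\chi_\Psi$ and identity on $A$. Their difference therefore factors as $E(A)\to A\to\hG\to A^*_\Psi$, that is, through a homomorphism $A\to\hG$. Thus we need the rigidity input $\Hom_R(A,\hG)=0$ for an abelian scheme. This holds because a $\pi$-formal homomorphism $A\to\hG$ is determined mod $\pi^n$ by morphisms from a proper connected scheme to $\mathbb{G}_a$, which are constant, hence zero on a homomorphism. Consequently the difference vanishes and $g_\Psi\circ\xi=\xi_\Psi$.

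The main obstacle is the first step, namely making rigorous that $\xi$ is a group homomorphism compatible with $i_e$. Group-compatibility of the prolongation $(u_e,u_e\circ\EE(F))$ requires that $\EE(F)$ be a homomorphism, which Raynaud's construction provides. One must then check that the uniqueness in Theorem \ref{Uni-jet} forces $\xi\circ m_E=m_J\circ(\xi\times\xi)$, using $\pi$-torsion freeness of $E(A)\times E(A)$. The rest of the argument is formal.
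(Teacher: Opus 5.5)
Your proof is correct and rests on the same two inputs as the paper: injectivity of $\Delta\colon \Hom(V(A),\hG)\to\Ext(A,\hG)$ and $\Hom(A,\hG)=0$. The paper packages these as $\Hom(E(A),\hG)=0$, so any two homomorphisms $E(A)\to A^*_{\Psi}$ over $A$ agree without first matching kernel components; you match the kernel components first, which obtains Corollary \ref{cor-frob-univ} before the lemma rather than from it, and your extra check that $\xi$ is a homomorphism restricting to $\chi$ on $V(A)$ (uniqueness in Theorem \ref{Uni-jet} on the flat $E(A)\times_R E(A)$, together with $\EE(F)$ being a homomorphism) is a point the paper takes for granted in diagram \eqref{jet-univ-1}.
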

\begin{proof} Let $f,g : E(A) \rightarrow A^*_{\Psi}$ be two group scheme maps over $A.$ Then consider the difference $f-g.$ The image of $f-g$ lands in the kernel $ \hG.$ Hence $f-g \in \mathrm{Hom}(E(A),\hG).$ 

On the other hand applying $\mathrm{Hom}(- , \hG)$ to $0 \rightarrow V  \rightarrow E(A)  \rightarrow A \rightarrow 0 $ gives us the long exact sequence: 
$$0\rightarrow \mathrm{Hom}(E(A), \hG) \rightarrow \mathrm{Hom}(V, \hG) \stackrel{\Delta}{\rightarrow} \mathrm{Ext} (A,  \hG)\rightarrow \ldots $$

But the connecting map $\Delta$ is an isomorphism and hence $\mathrm{Hom}(E(A), \hG) =\{0\}$ which implies that $f=g.$
\end{proof}

\begin{corollary}\label{cor-frob-univ} Let $\Psi\in \Hom_R(N^1,\hG)$ be a character. The following two maps are equal $$\chi_{\Psi}=\Psi\circ\chi: V(A)\to\hG.$$ 
\end{corollary}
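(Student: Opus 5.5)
The plan is to deduce the equality from Lemma \ref{frob-jet-univ} by restricting both sides to the subgroup $V(A)\subset E(A)$. Throughout, I read $\Psi$ as a character of $N^1$, so that $\Psi: N^1\to \hG$ is the map used to push out the bottom row of \eqref{jet-univ-1}.

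First I fix the maps involved. Write $\iota_\Psi:\hG\to A^*_\Psi$ for the inclusion of the kernel in the pushed-out extension, and $i_e:V(A)\to E(A)$, $\iota:N^1\to J^1A$ for the inclusions of the kernels. The two diagrams give two relations:
\[
g_\Psi\circ \iota=\iota_\Psi\circ \Psi, \qquad \xi_\Psi\circ i_e=\iota_\Psi\circ\chi_\Psi .
\]
The first is commutativity of the push-out square, and the second is commutativity of \eqref{jet-univ-2}. Commutativity of the left square of \eqref{jet-univ-1} gives
\[
\xi\circ i_e=\iota\circ\chi .
\]

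Next I compose $i_e$ with both sides of Lemma \ref{frob-jet-univ}, that is, with $g_\Psi\circ\xi=\xi_\Psi$. This yields
\[
\iota_\Psi\circ\chi_\Psi=\xi_\Psi\circ i_e=g_\Psi\circ\xi\circ i_e=g_\Psi\circ\iota\circ\chi=\iota_\Psi\circ\Psi\circ\chi .
\]
Since $\iota_\Psi$ is a monomorphism (it is the kernel inclusion of a short exact sequence of $\pi$-formal group schemes), it can be cancelled on the left. This gives $\chi_\Psi=\Psi\circ\chi$.

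The only point needing care is the bookkeeping of which maps make which squares commute. In particular, the push-out of $J^1A$ along $\Psi$ must be the same extension $A^*_\Psi$ that appears in \eqref{jet-univ-2}, with the same kernel inclusion $\iota_\Psi$; this holds by the definition of $\chi_\Psi$. The lemma itself was already stated for $\Psi\in\Hom_R(N^1,\hG)$, so no further input is needed. The remaining ingredient is injectivity of $\iota_\Psi$ on $T$-points, which holds for kernels of a short exact sequence. I do not expect a genuine obstacle here.
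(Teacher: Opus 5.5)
Your proposal is correct and is the argument the paper has in mind. The paper only says the corollary ``directly follows from Lemma \ref{frob-jet-univ}''; you spell this out by precomposing $g_\Psi\circ\xi=\xi_\Psi$ with $i_e$, using the left squares of \eqref{jet-univ-1} and \eqref{jet-univ-2} together with the push-out square, and then cancelling the monomorphism $\hG\hookrightarrow A^*_\Psi$.
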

\begin{proof} Directly follows from Lemma \ref{frob-jet-univ}.
\end{proof}
\begin{corollary} \label{frob-eq} Let $A$ be an abelian scheme over $R.$ Then the following two maps coincide $$d\xi\circ d\phi=\Fc:\Lie(A)^{\vee}\to \Lie (E(A))^\vee \simeq\Hdr(A).$$
\begin{proof} By diagram \ref{jet-univ-1}, we get $$\phi\circ \xi=u_e \circ\EE(F): E(A)\to A.$$ The induced map in differentials gives the desired equality, completing the proof.
\end{proof}
\end{corollary}
\section{Frobenius compatibility between $\Hd(A)$ and $\Hcr(A)$}\label{S4}
\subsection{The map $\Phi$}
Following section $6.2$ in \cite{BS_b}, let us recall Borger--Saha's construction of the map $\Phi$ from the character group of kernels to the de Rham cohomology of $A$. For each $n,$ consider the jet sequence 
\[0\to N^n\to J^nA\to A\to 0
\]
Given an element $\Psi\in \Hom(N^n,\hG),$ we have the push-forward sequence 
\begin{align}\label{jet-push}\xymatrix{
0\ar[r]& N^n\ar[d]^{\Psi}\ar[r]&J^nA\ar[d]^{g_\Psi}\ar[r]& A\ar@{=}[d]\ar[r]&0\\
0\ar[r]& \hG\ar[r]&A^*_{\Psi}\ar[r] & A\ar[r]&0
}
\end{align}
This gives rise to the connecting homomorphism in the long exact sequence after applying the $\Hom(-,\hG)$ functor to the jet sequence. Therefore we have  
\begin{align}\label{connecting} \partial:\Hom(N^n,\hG) &\to \Ext(A,\hG)\\
\nonumber \Psi&\mapsto [A^*_\Psi].
\end{align}
Based on the choice of local \'{e}tale coordinates $\bx_0$ for $A$, Borger--Saha shows that there is a canonical splitting of the Lie algebras of the jet sequence.
 We let it to be $$\switt: \Lie J^nA 
\map \Lie N^n $$  given by $\switt (\bx_0, \cdots , \bx_n)= (\bx_1,\cdots,
\bx_n).$
Thus, we have the following split exact sequence of $R$-modules
	$$
	\xymatrix{
	0 \ar[r] & \Lie N^n \ar[r]^-{Di} & \Lie J^nA \ar@/^/[l]^{{\switt}}
	\ar[r]^-{Du} & \Lie(A) \ar[r] & 0
	}
	$$
Let $v: \Lie A \map \Lie J^nA$ be the corresponding section, satisfying
$\switt= \mathbbm{1} - v \circ Du$. Then in Witt coordinates, $v$ is given by $v(\bx_0) = (\bx_0,0,
\cdots, 0)$.
Let $s_\Psi$ denote the induced splitting of the push-out extension
	\begin{align}\label{lie-split}
	\xymatrix{
	0 \ar[r] & \Lie \hG \ar[r] & \Lie(A^*_\Psi) \ar@/^/[l]^{s_\Psi}
	\ar[r] & \Lie(A) \ar[r] & 0
	}
	\end{align}
It can be described explicitly in terms of the composition
	$$
	\tilde{s}_\Psi : \Lie J^nA \times \Lie\hG \longmap \frac{\Lie  J^nA \times \Lie  \hG}{\Lie \Gamma(N^n)} 
	\longlabelmap{s_\Psi} \Lie \hG
	$$
by
	\begin{align}\label{splitting}
	\tilde{s}_\Psi(\bx,y)= D\Psi({\switt}(\bx)) + y.
	\end{align}
Recall that $\Ext^\sharp(A,\hG)$ parametrizes isomorphism classes of 
extensions of $A$ by $\hG$, along with a splitting of the corresponding 
short exact sequence of the (commutative) Lie algebras. (See~\cite{MazMess}, p.\ 13--14, where it
would be denoted $\mathrm{Extrig}(A,\hG)$.) 	
We then have a morphism of exact sequences
	\begin{equation}
	\label{trivext}
	\xymatrix{
		0 \ar[r] & \bX_n(A) \ar[r] \ar[d] & 
		\HomA(N^n,\hG) \ar[r]^{\partial} \ar[d]_{\Psi\mapsto (A_\Psi^*,s_\Psi)}& 
		\Ext(A,\hG)  \ar@{=}[d]& \\
		0 \ar[r] &\dualmod{\Lie(A)} \ar[r] & \Ext^\sharp(A,\hG) \ar[r] & 
\Ext(A,\hG) \ar[r] & 0
	}
	\end{equation}
Then by proposition $6.1$(3) in \cite{BS_b}, $i^*\phi^*(\bX_{n-1}(A)_{\phi})$ is in 
the kernel
of the middle vertical map. Therefore 
this diagram induces a diagram
	\begin{equation}
	\label{diag-crys}
	\xymatrix{
	0 \ar[r] & \frac{\bX_n(A)}{\phi^*(\bX_{n-1}(A))} \ar[r] 
\ar[d]_-\Upsilon & 
	\bH_n(A) \ar[r] \ar[d]^-{\Phi} & 
	\bI_n(A) \ar[r] \ar@{^{(}->}[d]& 0\\
	0 \ar[r] &\dualmod{\Lie(A)} \ar[r] & \Ext^\sharp(A,\hG) \ar[r] & \Ext(A,\hG)
	 \ar[r] & 0
	}
	\end{equation}
where $\bI_n(A)$ denotes the image of $\partial:\Hom(N^n,\hG) \map \Ext_A(A,\hG)$.
Passing to the limit gives a diagram:
\begin{equation}
	\label{diag-crys-limit}
	\xymatrix{
	0 \ar[r] & \bXp(A) \ar[d]_\Upsilon \ar[r] & 
	\Hd(A) \ar[d]_\Phi \ar[r] &\bI(A) \ar@{^{(}->}[d] \ar[r] &  0 \\
	0 \ar[r] & \dualmod{\Lie (A)} \ar[r] & \Ext^\sharp(A,\hG) \ar[r] & \Ext(A,\hG)
	 \ar[r] & 0 
	}
\end{equation}
The $R$-module $\Ext^\sharp(A,\hG)$ is canonically isomorphic to the de Rham cohomology $\Hdr(A)$ of the abelian scheme
$A$ (cf. section $6.1$ in \cite{BS_b}).
\subsection{Reinterpretation of $\Phi$ at level $1$} For $n=1$, we can reinterpret the above diagram as follows. Consider the commutative diagram \eqref{jet-univ-2} at the level of Lie algebras 
\begin{align}\label{lie-jet-univ}\xymatrix{
0\ar[r]& \Lie V(A)\ar[d]^{D\chi}\ar[r]&\Lie E(A)\ar[d]^{D\xi}\ar[r]& \Lie A\ar@{=}[d]\ar[r]&0\\
0\ar[r]& \Lie N^1\ar[r]&\Lie J^1A\ar[r] &\Lie  A\ar[r]&0
}
\end{align}
 Since all the $R$-modules are free, applying $\Hom(-,\hG)$, we get

\begin{align}\label{dual-lie-jet-univ}\xymatrix{
&0\ar[r]&\bX_1(A)\ar@{-->}[lddd]^{\Upsilon}\ar[r]&\Hom(N^1,\hG)\ar@{-->}[lddd]^-{\Phi}\ar[ld]_-{\Phi'}\ar[d]^{\Psi\to D\Psi}\ar[r]^-{\partial}&\Ext(A,\hG)\ar@{-->}[lddd]^{\alpha} \\
0\ar[r]&\Lie  (A)^{\vee} \ar@{=}[dd]\ar[r]&\Lie (J^1A)^{\vee}\ar[dd]^-{d\xi} \ar[r] &\Lie (N^1)^{\vee}\ar@/_/[l]_-{{\ts}} \ar[dd]_{
d\chi}\ar[r]&0\\
& & & &\\
0\ar[r]&\Lie (A)^{\vee}\ar[r]&\Lie (E(A))^{\vee}\ar[r]&  \Lie (V(A))^{\vee}\ar[r]&0
}
\end{align}
where $\ts$ is the dual of the splitting $\switt.$ To prove the commutativity of the above diagram, it is enough to show that \begin{align}\label{com-eq} d\chi(D\Psi)=\alpha(\partial(\Psi)).\end{align}
Note that the derivative map $D: V(A)^{\vee}\rightarrow \Lie(V(A))^{\vee}$ given by $\psi\mapsto D\psi$ is an isomorphism of group schemes. Also, the connecting map $\Delta:V(A)^{\vee}\rightarrow \Ext(A,\hG)$ defined by the push-forward of the universal vectorial extension by an element of $V(A)^{\vee}$ is an isomorphism. Combining both these isomorphisms, in diagram \eqref{dual-lie-jet-univ} we identify $$\alpha:\Ext(A,\hG)\xrightarrow{\simeq} \Lie(V(A))^{\vee}$$ and we have $\alpha\circ \Delta=D.$

On the other hand, following \eqref{connecting}, the connecting map $\partial$ can also be described as the push-forward of the jet extension by a character. Therefore by Corollary \ref{cor-frob-univ}, for any $\Psi\in \Hom(N^1,\hG)$ we have $$\Delta(\Psi \circ \chi)=\Delta(\chi_{\Psi})=\partial(\Psi).$$
Thus we obtain $$ d\chi(D\Psi)=D(\Psi\circ \chi)=\alpha \circ \Delta (\Psi\circ \chi)=\alpha(\partial(\Psi))$$  proving the required equality \eqref{com-eq}. 
 Let us now consider the isomorphism of the following two exact sequences of $R$-modules
\begin{align}\label{ext-lie}\xymatrix{
0\ar[r]&\Lie (A)^{\vee}\ar[r]\ar@{=}[d]&\Lie (E(A))^{\vee} \ar[d]^-{\simeq}_{\alpha'}\ar[r]^-{di_e}&\Lie (V(A))^{\vee}\ar[r]\ar[d]^-{\simeq}_{\alpha^{-1}}&0\\
0 \ar[r] &\dualmod{\Lie(A)} \ar[r] &\Ext^\sharp(A,\hG)  \ar[r] & \Ext(A,\hG) 
  \ar[r] & 0.
}
\end{align}
\subsubsection{The map $\alpha'$} The map $\alpha'$ can be given explicitly. Let the element $\alpha^{-1}(di_e(\eta))\in \Ext(A,\hG)$ gives an extension class represented by $$0\to \hG\to E\to A\to 0,$$ which is the push-forward of the universal vectorial extension by $g\in V(A)^{\vee}$ such that $Dg=di_e(\eta)$ as follows:
\begin{align}\label{univ-push}\xymatrix{
0\ar[r]& V(A)\ar[d]^{g}\ar[r]^{i_e}&E(A)\ar[d]\ar[r]& A\ar@{=}[d]\ar[r]&0\\
0\ar[r]& \hG\ar[r]&E\ar[r] & A\ar[r]&0
}
\end{align}
Then note that $E=\frac{E(A)\times \hG}{\Gamma(V(A))}$ where $\Gamma(V(A)):=\{i_e(v),-g(v)\in E(A)\times \hG\}.$ Thus we have a map $\tilde{s_\eta}:\Lie(E(A))\times \hG\to \hG$ given by $(\bx,y)\to \eta(\bx)+y.$ Furthermore, $Dg=di_e(\eta)$ implies that $\tilde{s_\eta}$ restricted to $\Lie(\Gamma(V(A)))$ is trivial. Hence it induces a map $s_\eta: \Lie(E)\to \hG$, which gives a splitting as follows:
\begin{align}\label{lie-split-2}
	\xymatrix{
	0 \ar[r] & \Lie \hG \ar[r] & \Lie(E) \ar@/^/[l]^{s_\eta}
	\ar[r] & \Lie(A) \ar[r] & 0
	}
	\end{align}
Then the map $\alpha'$ is explicitly given by 
\begin{align}\label{alpha'}\alpha'(\eta)=(\alpha^{-1}(di_e(\eta)), s_\eta).
\end{align}
The following proposition shows that under the isomorphism \eqref{ext-lie}, diagram \eqref{dual-lie-jet-univ} recovers the map $\Phi$ constructed by Borger--Saha at level $1.$
 
\begin{proposition} Let $\Phi'$ be as in diagram \eqref{dual-lie-jet-univ}. Then  $$\alpha'\circ d\xi\circ\Phi'= \Phi|_{\Hom(N^1,\hG)}.$$
\end{proposition}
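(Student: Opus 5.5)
Fix $\Psi\in\Hom(N^1,\hG)$. By definition $\Phi(\Psi)=(A^*_\Psi,s_\Psi)\in\Ext^\sharp(A,\hG)$, where $s_\Psi$ is induced by $\tilde s_\Psi(\bx,y)=D\Psi(\switt(\bx))+y$ as in \eqref{splitting}. On the other side, $\Phi'(\Psi)=\ts(D\Psi)=D\Psi\circ\switt\in\Lie(J^1A)^\vee$. Put $\eta:=d\xi(\Phi'(\Psi))=D\Psi\circ\switt\circ D\xi\in\Lie(E(A))^\vee$. By \eqref{alpha'}, $\alpha'(\eta)=(\alpha^{-1}(di_e(\eta)),s_\eta)$. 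So I must check two things: the underlying extensions agree, and the Lie splittings agree.

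\textbf{Extensions.} Compute $di_e(\eta)=\eta\circ Di_e=D\Psi\circ\switt\circ D\xi\circ Di_e$. By \eqref{lie-jet-univ}, $D\xi\circ Di_e$ equals $Di\circ D\chi$, where $i:N^1\to J^1A$. Since $\switt\circ Di=\mathbbm{1}$, we get $di_e(\eta)=D\Psi\circ D\chi=d\chi(D\Psi)$. By \eqref{com-eq}, this equals $\alpha(\partial\Psi)$. Hence $\alpha^{-1}(di_e(\eta))=\partial\Psi=[A^*_\Psi]$. Concretely, by Corollary \ref{cor-frob-univ}, the push-forward of $E(A)$ along $g=\Psi\circ\chi=\chi_\Psi$ is $A^*_\Psi$. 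Moreover, by Lemma \ref{frob-jet-univ}, the identification $E=\frac{E(A)\times\hG}{\Gamma(V(A))}\simeq A^*_\Psi$ is induced by $\xi\times\mathrm{id}$, i.e.\ by $g_\Psi\circ\xi=\xi_\Psi$.

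\textbf{Splittings.} I must show that under the isomorphism $E\simeq A^*_\Psi$ the two splittings match. Consider the map $\xi\times\mathrm{id}:E(A)\times\hG\to J^1A\times\hG$. It sends $\Gamma(V(A))$ into $\Gamma(N^1)$, because $\xi\circ i_e=i\circ\chi$ and $g=\Psi\circ\chi$. So it descends to the isomorphism $E\to A^*_\Psi$. It then suffices to check $\tilde s_\Psi\circ(D\xi\times\mathrm{id})=\tilde s_\eta$ on $\Lie E(A)\times\Lie\hG$. This is immediate:
\[
\tilde s_\Psi(D\xi(\bx),y)=D\Psi(\switt(D\xi(\bx)))+y=\eta(\bx)+y=\tilde s_\eta(\bx,y).
\]
Since both splittings are induced from these maps on the quotient Lie algebras, we get $s_\eta=s_\Psi$ under the identification.

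\textbf{Main obstacle.} The delicate step is the bookkeeping identifying $E$ with $A^*_\Psi$ compatibly with the quotient presentations, so that the splittings can be compared. This is where Lemma \ref{frob-jet-univ} (uniqueness of the map $E(A)\to A^*_\Psi$) is used to guarantee that the map induced by $\xi\times\mathrm{id}$ is the one realizing $\alpha^{-1}(di_e\eta)=\partial\Psi$. Once that is in place, both components agree and $\alpha'\circ d\xi\circ\Phi'=\Phi$ on $\Hom(N^1,\hG)$.
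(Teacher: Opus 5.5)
Your proof is correct and follows the same route as the paper: the extension components are matched via \eqref{com-eq}, and the splittings via \eqref{splitting} together with the explicit description \eqref{alpha'} of $\alpha'$. You also supply two steps the paper leaves implicit: the computation $di_e(d\xi(\ts(D\Psi)))=d\chi(D\Psi)$ from $\xi\circ i_e=i\circ\chi$ and $\switt\circ Di=\mathbbm{1}$, and the explicit isomorphism $E\simeq A^*_\Psi$ induced by $\xi\times\mathrm{id}$, which carries $s_\Psi$ to $s_\eta$.
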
 
\begin{proof} Recall the map $\Phi$ is constructed as follows
 \begin{align*} \Phi: \Hom(N^1,\hG)&\to \Ext(A,\hG)\to \Ext^\sharp(A,\hG)\\
 \Psi&\map\partial (\Psi)\map (\partial (\Psi), s_{\Psi})  
 \end{align*}
 Note that by \eqref{com-eq} we have  $d\chi (D\Psi)=\alpha(\partial (\Psi)).$ Moreover, by \eqref{splitting} the following is also true $$\ts (D\Psi(\bx))=D\Psi(\switt(\bx))= \tilde{s}_{\Psi}:\Lie(J^1A)\to \hG.$$
Recall the above induces the splitting $s_{\Psi}$ associated to $\Psi$ as in \eqref{lie-split}. 
Then following the description of the map $\alpha'$ as in \eqref{alpha'}, we see that $$\alpha'(d\xi\circ \Phi')=(\partial(\Psi),s_{\Psi})=\Phi(\Psi)$$ This completes the proof.
 \end{proof}
\noindent In what follows, under the identifications $\Hdr(A)\simeq \Lie(E(A))^{\vee}\simeq \Ext^{\sharp}(A,\hG),$ we will assume 
\begin{align}\label{level-one-Phi}
\Phi=d\xi\circ\Phi': \Hom(N^1,\hG)_K\to \Hdr(A)_K.
\end{align}
\subsection{Compatibility between the two Frobini $\mff^*$ and $\Fc$}
 Recall that by Theorem \ref{GPS-1}, we have the following inclusion of exact sequences of $R$-modules
\begin{align}\label{delta-hodge}
\xymatrix{
	0 \ar[r] & \bXp(A)\ar@{^{(}->}[d]^\Upsilon \ar[r] & \Hd(A) \ar@{^{(}->}[d]^\Phi \ar[r] & \bI(A) \ar@{^{(}->}[d] \ar[r] & 0 \\
	0 \ar[r] & H^0(A,\Omega_A) \ar[r] &\bH^1_{\mathrm{dR}}(A) \ar[r] & H^1(A,\Ou_A) \ar[r] & 0.
	}
\end{align}
In this subsection, we will show that
\begin{theorem}\label{main-thm} Let $A$ be a $\pi$-formal abelian scheme over $R.$ Then $$\Phi\circ\mff^*=\Fc\circ\Phi:\Hd(A)_K\to\Hdr(A)_K.$$ 
\end{theorem}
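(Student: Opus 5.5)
The plan is to reduce the identity $\Phi\circ\mff^*=\Fc\circ\Phi$ on all of $\Hd(A)_K$ to a check on the generating subspace $\bXp(A)_K$. By Theorem \ref{delta-crys-main}, $\Hd(A)_K=\cF_{m_u-1}$, and by construction every element of $\cF_n$ is a sum $\sum_j (\mff^*)^j\theta_j$ with $\theta_j\in\bXp(A)_K$. Suppose we know
$$\Phi(\mff^*\theta)=\Fc(\Phi(\theta))\quad\text{for all }\theta\in\bXp(A)_K,$$
and also that $\Phi$ intertwines $\mff^*$ and $\Fc$ on $(\mff^*)^j\bXp(A)_K$. The second statement follows from the first by induction on $j$, provided the induction runs on $\cF_n$ itself. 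So we argue by induction on $n$ that $\Phi\circ\mff^*=\Fc\circ\Phi$ on $\cF_n$. For $x=\theta+\mff^*y$ with $y\in\cF_{n-1}$, linearity (recall $\phi=\mathrm{id}$ on $R$) gives
$$\Phi(\mff^*x)=\Phi(\mff^*\theta)+\Phi(\mff^*\mff^*y)=\Fc\Phi(\theta)+\Fc\Phi(\mff^*y).$$
The last equality uses the inductive hypothesis applied to $\mff^*y$. The catch is that $\mff^*y$ lies in $\cF_n$, not $\cF_{n-1}$. I will fix this by inducting instead on the statement $\Phi\circ(\mff^*)^{j+1}=\Fc\circ\Phi\circ(\mff^*)^j$ on $\bXp(A)_K$ for each $j$, which follows from the case $j=0$ by iterating. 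So everything reduces to the base case on $\bXp(A)_K$.

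For the base case, take a primitive character, represented at the level of $\Hom(N^1,\hG)$ by $i^*\Theta$ with $\Theta\in\bX_n(A)$. By Proposition \ref{diff}(1), modulo $i^*\phi^*\bX$ we have $\mff^*(i^*\Theta)=\gamma_\Theta\cdot\bPsi_1$, which lies in $\bH_1(A)_K$, the image of $\Hom(N^1,\hG)_K$. Here $\bPsi_1$ are the coordinate characters $\bx_1$ on $N^1$, so $D\bPsi_1$ is the dual basis of $\Lie(N^1)^\vee$. Now use the level-one description \eqref{level-one-Phi}, $\Phi=d\xi\circ\Phi'$. The map $\Phi'$ sends $\gamma_\Theta\cdot\bPsi_1$ to $\ts(\gamma_\Theta\, d\bx_1)$, which is the covector $\gamma_\Theta\, d\bx_1$ on $\Lie J^1A$ in Witt coordinates $(\bx_0,\bx_1)$. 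On the other hand, the Frobenius $\phi:J^1A\to A$ is $\bx_0\mapsto \bx_0^q+\pi\bx_1$ to first order, so $d\phi(d\bx_0)=\pi\, d\bx_1$ on Lie algebras. Hence
$$\Phi(\mff^*\Theta)=d\xi\!\left(\tfrac{\gamma_\Theta}{\pi}\,d\phi(d\bx_0)\right)=\Fc\!\left(\tfrac{\gamma_\Theta}{\pi}\,d\bx_0\right),$$
where the last step is Corollary \ref{frob-eq}. Since $\Phi|_{\bXp}=\Upsilon$ and $\Upsilon(\Theta)=\frac{\gamma_\Theta}{\pi}d\bx_0$ by Theorem \ref{GPS-1}, this is exactly $\Fc(\Phi(\Theta))$.

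The main obstacle is making the base case rigorous. Three points need care. First, the representative $\mff^*i^*\Theta$ and $\gamma_\Theta\bPsi_1$ differ by $i^*\phi^*\bX$, and this difference is killed by $\Phi$ (the kernel statement from Proposition 6.1(3) of \cite{BS_b}). Second, the Lie-level map $d\phi$ must be computed correctly in Witt coordinates, including the first-order term; this is where the factor $\pi$ cancels. Third, the identification $\Phi|_{\bXp}=\Upsilon$ must be compatible with the level-one model $d\xi\circ\Phi'$ after dualizing. I would check all three by comparing against diagram \eqref{dual-lie-jet-univ}, which already encodes $\Phi'$ restricted to $\bX_1$ as $\Upsilon$ via $\ts$.
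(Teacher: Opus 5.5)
Your base case is correct and is the same computation as the paper's proof of Theorem \ref{Frob-main-thm}. It uses $\mff^*[\Theta]=\gamma_\Theta\bPsi_1$ from Proposition \ref{diff}(1), the level-one model $\Phi=d\xi\circ\Phi'$, the identity $\ts(D\bPsi_1)=\frac1\pi d\phi(d\bx_0)$ (your $d\phi(d\bx_0)=\pi\,d\bx_1$), and $d\xi\circ d\phi=\Fc$ from Corollary \ref{frob-eq}. Two minor slips do not affect this. First, $i^*\Theta$ lies in $\Hom(N^n,\hG)$; only $\mff^*i^*\Theta$ drops to level one, modulo $i^*\phi^*\bX_n(A)$. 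Second, $\bPsi_1=\frac1\pi\log_A(\pi\bx_1)$ is not the coordinate $\bx_1$, although $D\bPsi_1=d\bx_1$ does hold.

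The gap is in the reduction from $\Hd(A)_K$ to $\bXp(A)_K$. You rightly noticed that the induction on $\cF_n$ is circular. Your fix does not work, though. The identity $\Phi\circ(\mff^*)^{j+1}=\Fc\circ\Phi\circ(\mff^*)^j$ on $\bXp(A)_K$ does not follow from the case $j=0$ by iterating. Iterating would mean applying the $j=0$ identity to $\mff^*\theta$, which lies in $\bH_1(A)_K=\Hom(N^1,\hG)_K$ and in general not in $\bXp(A)_K$; when $\bX_1(A)=0$, Theorem \ref{order-1} gives $\bXp(A)_K\cap\mff^*\bXp(A)_K=0$.

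Already for $j=1$ you need $\Phi(\mff^*\bPsi_1)=\frac1\pi\Fc^2(d\bx_0)$. This is a statement about $\Phi$ on $\bH_2(A)_K$, that is, on characters of $N^2$, and the level-one comparison $\xi\colon E(A)\to J^1A$ does not reach it.

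A non-CL elliptic curve shows this is genuinely new information. There $\Hd(A)_K$ has basis $\theta,\mff^*\theta$, and the case $j=0$ already determines $\Phi$ completely: $\Phi\theta=\omega$ and $\Phi\mff^*\theta=\Fc\omega$. The remaining identity $\Phi(\mff^{*2}\theta)=\Fc^2\omega$ then holds if and only if $\mff^*$ on $\Hd(A)_K$ and $\Fc$ on $\Hdr(A)_K$ have the same characteristic polynomial, which is not formal. As written, your argument proves the theorem only when $m_u=1$, i.e.\ when $\Hd(A)_K=\bXp(A)_K$.

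The paper's proof follows exactly your route and makes the same reduction in a single sentence ("Thus we can reduce Theorem \ref{main-thm} to \ldots\ Theorem \ref{Frob-main-thm}"), so it does not supply the missing step either. Closing the gap needs an independent proof of $\Phi\circ\mff^*=\Fc\circ\Phi$ on $\bH_n(A)_K$ for $1\le n\le m_u-1$. One route would be a higher-level analogue of the comparison between $E(A)$ and $J^nA$; another would be a direct comparison of characteristic polynomials.
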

Note that by Theorem \ref{delta-crys-main}, we have $\Hd(A)_K$ is generated by $\bXp(A)_K$ and their successive images applying $\mff^*$. Thus we can reduce Theorem \ref{main-thm} to the following theorem:
\begin{theorem}\label{Frob-main-thm}Let $A$ be a $\pi$-formal abelian scheme over $R.$ Then $$\Phi\circ\mff^*=\Fc\circ\Upsilon:\bXp(A)_K\to \Hdr(A)_K.$$
\end{theorem}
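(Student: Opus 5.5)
Fix $\Theta\in\bXp(A)_K$, represented by a delta character of some order, and use Proposition~\ref{diff}(1): in $\Hd(A)_K$ we have $\mff^*\Theta=\gamma_\Theta\cdot\bPsi_1$. This class lies in $\bH_1(A)_K$, the image of $\Hom(N^1,\hG)_K$, because $\iota^*\phi^*\Theta$ is killed in $\bH_1$. Hence, by \eqref{level-one-Phi}, $\Phi(\mff^*\Theta)=d\xi\circ\Phi'(\gamma_\Theta\cdot\bPsi_1)$. On the other side, Theorem~\ref{GPS-1} gives $\Upsilon(\Theta)=\frac{\gamma_\Theta}{\pi}\cdot d\bx_0\in\Lie(A)^\vee$. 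So the task is to compare two elements of $\Lie(E(A))^\vee$:
\[
d\xi\bigl(\ts(D(\gamma_\Theta\cdot\bPsi_1))\bigr)\quad\text{and}\quad \Fc\Bigl(\tfrac{\gamma_\Theta}{\pi}\cdot d\bx_0\Bigr).
\]

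First I compute $\Phi'(\gamma_\Theta\cdot\bPsi_1)$ in Witt coordinates. The characters $\bPsi_1$ on $N^1$ have differential the identity on the coordinates $\bx_1$. Under $\ts$ (extension by zero on the $\bx_0$-part), $\gamma_\Theta\cdot\bPsi_1$ therefore goes to the covector $(0,\gamma_\Theta)$ on $\Lie J^1A$. Next I compare this with $d\phi$. The Frobenius $\phi:J^1A\to A$ is given in Witt coordinates by $\bx_0\mapsto\bx_0^{q}+\pi\bx_1+\dots$, so at the identity section its differential is $d\phi(d\bx_0)=\pi\,d\bx_1$; the term $\bx_0^q$ has zero derivative at the origin. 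Thus
\[
(0,\gamma_\Theta)=\tfrac{1}{\pi}\,d\phi(\gamma_\Theta\cdot d\bx_0)=d\phi\bigl(\Upsilon(\Theta)\bigr)
\]
as elements of $\Lie(J^1A)^\vee_K$. This is the key identity: $\Phi'\circ\mff^*=d\phi\circ\Upsilon$ on $\bXp(A)_K$.

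Applying $d\xi$ and Corollary~\ref{frob-eq} ($d\xi\circ d\phi=\Fc$ on $\Lie(A)^\vee$), I get $\Phi(\mff^*\Theta)=d\xi\, d\phi\,\Upsilon(\Theta)=\Fc\Upsilon(\Theta)$, which is the claim.

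The main obstacle is the coordinate computation. I must make sure that the class of $\gamma_\Theta\cdot\bPsi_1$ in $\bH_1(A)_K$ is legitimately evaluated by $\Phi'$ on a representative. This holds because $\Phi$ kills $i^*\phi^*\bX_0$, by Proposition 6.1(3) of \cite{BS_b}. I also need $d\phi$ at the origin to be exactly $\pi\,d\bx_1$ with no $\bx_0$-contribution, so that it lands in the image of $\ts$. The latter follows from the Witt vector formula $F(a_0,a_1)=a_0^q+\pi a_1$ together with the fact that the \'etale coordinates vanish at the identity. Sign and normalization conventions for $\gamma_\Theta=\pi A_0$ should be checked against Proposition~\ref{diff}.
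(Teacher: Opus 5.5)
Your proposal is correct and follows essentially the paper's own argument. Your key identity $\Phi'\circ\mff^*=d\phi\circ\Upsilon$ is exactly the combination of Lemma \ref{Frob-up} (i.e.\ $\mff^*\Theta=\gamma_\Theta\cdot\bPsi_1$ from Proposition \ref{diff}(1)) and Lemma \ref{Frob-psi} ($\ts(D\bPsi_1)=\frac{1}{\pi}d\phi(d\bx_0)$), after which you, like the paper, conclude with $d\xi\circ d\phi=\Fc$ from Corollary \ref{frob-eq}; the only cosmetic differences are that you argue element-wise rather than via the matrix on a basis of $\bXp(A)$, and you read off $d\phi(d\bx_0)=\pi\, d\bx_1$ directly from the Witt-vector formula for $F$ rather than from $\bPsi_1=\frac{1}{\pi}\log_G\circ(\phi\circ i)$.
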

\subsubsection{Fundamental characters} Given a smooth commutative group scheme $G$ of relative dimension $g$ over $R,$ following section $5$ in \cite{GPS}, the map $$\phi \circ i: N^1 G \xrightarrow{i} J^1G\xrightarrow{\phi} G$$ factors through the subgroup scheme $G(\pi)\subset G,$ and it is given by $\bx \mapsto \pi \bx$ in coordinates. 
Then by Proposition $4.1$ in \cite{bui-new}, the $g$-tuple of additive characters $\bPsi_1\in \Hom_R( N^1G, \hG^g)_K$ can be defined as
 
\begin{align}
\label{Psi1}
\bPsi_1(\bx) := \frac{1}{\pi}\log_G \circ (\phi \circ i) (\bx) = \frac{1}{\pi}
\log_G(\pi \bx)
\end{align}
Following the same Proposition, we note that the character $\bPsi_1$ is defined over $R$ if the absolute ramification index $e(R)\leq p-1.$ We call it the {\it fundamental characters}, which forms a $K$-basis of $\Hom_R( N^1G, \hG)_K.$ 

\noindent We also have the {\it integral fundamental characters} $$\hP=\pi^v\bPsi_1: N^1\to \hG^g$$ where $v ~(v=0~ \mathrm{if} ~e(R)\leq p-1)$ is an integer as defined in Proposition $4.1$ in \cite{bui-new}. The integral fundamental characters forms an $R$-basis of $\Hom_R(N^1G,\hG).$

The rest of this section is devoted to the proof of Theorem \ref{Frob-main-thm}. Recall from section $4.1$ in \cite{BS_b} that the \'etale coordinates $\bx_0=(x_{01},\dots,x_{0g})$ around the identity section of $A$  fixes a basis $d\bx_0$ of $H^0(A,\Omega_A)\cong\Lie(A)^{\vee}.$ Let us also fix a basis $B=\{\Theta_1,\ldots, \Theta_g\}$ of $\bXp(A).$ 

\begin{lemma}\label{Frob-up} Let $[\Upsilon]^{d\bx_0}_{B}$ and $[\mff^*]^{\bPsi_1}_{B}$ be the matrices of the $K$-linear maps $\Upsilon$ and $\mff^*$ with respect to the prescribed basis respectively. Then $$[\mff^*]^{\bPsi_1}_{B}=\pi\cdot[\Upsilon]^{d\bx_0}_{B}.$$
\end{lemma}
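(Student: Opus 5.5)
The lemma is a direct comparison of coordinates, and the plan is to compute both matrices column by column on the fixed basis $B=\{\Theta_1,\dots,\Theta_g\}$ of $\bXp(A)$.

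\emph{Column $j$ of $[\Upsilon]^{d\bx_0}_B$.} Choose for each $j$ a representative $\Theta_j\in\bX_{n_j}(A)$ of the $j$-th basis vector. With respect to the local Witt coordinates, write its differential as $D\Theta_j=(A_0^{(j)},\dots,A_{n_j}^{(j)})$, as in \eqref{differential}. By Theorem \ref{GPS-1},
$$\Upsilon(\Theta_j)=\frac{\gamma_{\Theta_j}}{\pi}\cdot d\bx_0=A_0^{(j)}\cdot d\bx_0 ,$$
using $\gamma_{\Theta_j}=\pi A_0^{(j)}$ from Proposition \ref{diff}(1). So the $j$-th column of $[\Upsilon]^{d\bx_0}_B$ is $(A_0^{(j)})^{t}$, which equals $\gamma_{\Theta_j}^t/\pi$.

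\emph{Column $j$ of $[\mff^*]^{\bPsi_1}_B$.} Here $\mff^*$ is viewed as the map $\bXp(A)_K\to\bH_1(A)_K$, with the target basis $\bPsi_1$ (Lemma \ref{frob-bij-2}). Apply Proposition \ref{diff}(1) to $\Theta_j$:
$$\mff^*(i^*\Theta_j)=i^*\phi^*\Theta_j+\gamma_{\Theta_j}\cdot\bPsi_1 .$$
Since $\phi=\mathrm{id}$ on $R$, we have $i^*\phi^*\Theta_j\in i^*\phi^*(\bX_{n_j}(A)_\phi)$. This term therefore vanishes in $\bH_{n_j+1}(A)$, and hence in $\Hd(A)_K$. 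It follows that
$$\mff^*[\Theta_j]=\gamma_{\Theta_j}\cdot\bPsi_1=\pi A_0^{(j)}\cdot\bPsi_1 ,$$
so the $j$-th column of $[\mff^*]^{\bPsi_1}_B$ is $\pi (A_0^{(j)})^t$.

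Comparing the two columns for every $j$ gives $[\mff^*]^{\bPsi_1}_{B}=\pi\cdot[\Upsilon]^{d\bx_0}_{B}$.

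\emph{Bookkeeping points to check.} First, the result must not depend on the representative $\Theta_j$. If $\Theta_j$ is changed by $\phi^*\Theta'$, then $D(\phi^*\Theta')$ has zero $A_0$-component, because $\phi\circ v$ kills the first Witt slot to first order: on tangent spaces the Frobenius sends $(\bx_0,\dots)\mapsto \pi\bx_0+\cdots$. Pulling back along $u^*$ does not change $A_0$ either. Second, the identification of the $\bPsi_1$-coordinates must be consistent, namely that the $g$-tuple $\bPsi_1$ and the row vector $\gamma_\Theta$ are paired in the same index order as $d\bx_0$. This holds because both come from the same coordinates $\bx_0$, via \eqref{Psi1}. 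I expect this index and normalization consistency, rather than any conceptual input, to be the only delicate point, since the real content is already in Proposition \ref{diff}(1) and Theorem \ref{GPS-1}.
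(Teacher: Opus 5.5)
Your proposal is correct and follows the paper's own argument: the paper also reads off the matrix of $\Upsilon$ as the $A_0$-blocks of the $D\Theta_j$ (citing Borger--Saha's Theorem 8.2, which amounts to the formula in Theorem \ref{GPS-1}), and then uses Proposition \ref{diff}(1) to get $\mff^*[\Theta_j]=\pi A_0^{(j)}\cdot\bPsi_1$, exactly as you do. One small slip appears in your optional well-definedness check. The linearized Witt Frobenius is $(\bx_0,\bx_1,\dots,\bx_n)\mapsto(\pi\bx_1,\dots,\pi\bx_n)$, not $\pi\bx_0+\cdots$; your conclusion that $D(\phi^*\Theta')$ has zero $A_0$-component is still right, and the check is redundant anyway, since $\Upsilon$ and $\mff^*$ are already well defined on $\bXp(A)$ by the cited results.
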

\begin{proof} Following Theorem 8.2 in \cite{BS_b} the matrix $[\Upsilon]^{d\bx_0}_{B}$ can be explicitly given by $[A_{01},\ldots, A_{0g}]$ as in \eqref{differential}. Then the proof follows from Proposition \ref{diff} part (1).
\end{proof}

The \'etale coordinate system of $A$ gives a basis on $\Lie (A)^{\vee},\Lie (J^1A)^{\vee}$ and  $\Lie(N^1)^{\vee}$. With respect to these bases, we get two matrices $\ts(D\bPsi_1),d\phi\in M_{2g\times g}(K).$
\begin{lemma} \label{Frob-psi}  We have $$\ts(D\bPsi_1(\bx_1))= \frac{1}{\pi} d\phi(d\bx_0) \in \Lie(J^1A)_K^{\vee}.$$
\end{lemma}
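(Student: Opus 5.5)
The plan is to compute both sides explicitly in the Witt coordinates $(\bx_0,\bx_1)$ on $J^1A$ that the fixed étale coordinates $\bx_0$ induce. These coordinates give dual bases of $\Lie(J^1A)^\vee$, $\Lie(A)^\vee$ and $\Lie(N^1)^\vee$. Both sides of the claimed identity are $g$-tuples of linear forms on $\Lie(J^1A)_K$, so it suffices to check that they agree on the coordinate directions $\bx_0$ and $\bx_1$.

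\emph{Left-hand side.} First I compute $D\bPsi_1$ at the identity of $N^1$. By \eqref{Psi1}, $\bPsi_1(\bx_1)=\frac{1}{\pi}\log_A(\pi\bx_1)$. The formal logarithm is normalised so that $\log_A(\mathbf{T})=\mathbf{T}+(\text{terms of degree}\geq 2)$ in the étale coordinates centred at the identity. Hence the linear part of $\bPsi_1$ is $\frac{1}{\pi}\cdot\pi\,\bx_1=\bx_1$, so $D\bPsi_1=d\bx_1$, the identity matrix in the chosen basis of $\Lie(N^1)^\vee$. Next, $\ts$ is dual to $\switt(\bx_0,\bx_1)=\bx_1$. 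Therefore $\ts(D\bPsi_1)$ is the linear form $(\bx_0,\bx_1)\mapsto \bx_1$ on $\Lie(J^1A)$; in matrix form it is the $2g\times g$ block matrix with zero block in the $\bx_0$-rows and identity block in the $\bx_1$-rows.

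\emph{Right-hand side.} Here I compute $d\phi(d\bx_0)$, the differential at the identity section of $\phi^*\bx_0$, where $\phi:J^1A\to A$. By construction of $J^1A$ via $W_1$ and the Frobenius $F(a_0,a_1)=a_0^q+\pi a_1$, the pullback of the étale coordinate in Witt coordinates is $\phi^*\bx_0=\bx_0^{q}+\pi\bx_1$, computed entrywise. Here $\phi$ on $R$ is the identity, so no twist of coefficients appears. Since the coordinates vanish at the identity and $q\geq 2$, the term $\bx_0^q$ has zero differential there. Hence $d\phi(d\bx_0)=\pi\, d\bx_1$, and $\frac{1}{\pi}d\phi(d\bx_0)=d\bx_1$. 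This agrees with the left-hand side, proving the lemma.

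\emph{Main obstacle.} The delicate step is justifying the formula $\phi^*\bx_0=\bx_0^q+\pi\bx_1$ in the jet coordinates. A priori the Witt coordinates on $J^1A$ are only defined through the étale map to affine space, and $\phi$ is defined through the universal property of jets. I would handle this as in section 4.1 of \cite{BS_b}. The étale coordinate map $A\supset U\to\widehat{\mathbb{A}}^g$ induces an étale map $J^1U\to J^1\widehat{\mathbb{A}}^g=\widehat{\mathbb{A}}^{2g}$ compatible with $\phi$. On $J^1\widehat{\mathbb{A}}^g$ the Frobenius is literally the Witt vector Frobenius $F$, which gives the formula. As a consistency check, restricting to $N^1$ (where $\bx_0=0$) recovers the description $\phi\circ i:\bx\mapsto\pi\bx$ used in defining $\bPsi_1$. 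This is exactly the compatibility needed, and it suggests the two computations are normalised consistently.
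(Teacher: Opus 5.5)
Your proposal is correct and follows the paper's argument. The paper likewise combines the fact that $\log_G$ has identity derivative with $\bPsi_1=\frac{1}{\pi}\log_G\circ(\phi\circ i)$, and matches $D\bPsi_1$ with $\ts$; your computation $\phi^*\bx_0=\bx_0^q+\pi\bx_1$, giving $d\phi(d\bx_0)=\pi\,d\bx_1$ with no $d\bx_0$-component, spells out the step the paper's one-line proof leaves implicit.
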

\begin{proof} Note that the derivative of the $\log_G$ is the identity map of Lie algebras. The proof follows from the definition of $\bPsi_1$ and the fact that $di$ is exactly the map $\ts$ on differentials.
\end{proof}
\noindent \textbf{Proof of Theorem \ref{Frob-main-thm}.} We will show that the following diagram commutes.
\begin{align}\label{Frob-com-main}\xymatrix{
\bXp(A)_K\ar[d]^{\Upsilon}\ar[r]^-{\mff^*}&\Hom(N^1,\hG)_K\ar[d]^{\Phi'}\ar[rd]^-{\Psi\to D\Psi}\\
\Lie  (A)^{\vee}_K \ar[rd]^-{\Fc}\ar[r]^-{d\phi}&\Lie (J^1A)^{\vee}_K\ar[d]^{d\xi} \ar[r] &\Lie (N^1)^{\vee}_K \ar@/^/[l]^-{{\ts}}\\
&\Lie (E(A))^{\vee} _K&
}
\end{align}
Fix a system of \'etale coordinates $\bx_0$ on $A.$ By Lemma \ref{frob-bij-2}, we have $$\mff^*:\bXp(A)_K\to \Hom(N^1,\hG)_K.$$ By the commutative diagram \ref{dual-lie-jet-univ}, we get
\begin{align}
\nonumber \Phi(\bPsi_1(\bx_1))=&=d\xi(\Phi'(\bPsi_1(\bx_1)))~ [\mathrm{by}~ \eqref{level-one-Phi}]\\
 \label{eq-1}
                                               &=d\xi(\ts(D\bPsi_1(\bx_1)))~[\mathrm{by ~commutative~diagram}~ \ref{dual-lie-jet-univ}]\\
\nonumber                             & =d\xi(1/\pi \cdot d\phi(d\bx_0))~ [\mathrm{Lemma}~ \ref{Frob-psi}]\\
\nonumber                             &=1/\pi\cdot d\xi\circ d\phi(d\bx_0) \\
                              &=1/\pi\cdot \Fc(d\bx_0)~ [\mathrm{Corollary}~ \ref{frob-eq}]
\end{align}
Then fixing a basis $B=\{\Theta_1,\ldots,\Theta_g\}$ of primitive characters in $\bXp(A)$, we have 
\begin{align*}
\Phi(\mff^*(B))&=[\mff^*]^{\bPsi_1}_{B}\cdot\Phi(\bPsi_1(\bx_1))\\
                       &=\pi\cdot [\Upsilon]^{d\bx_0}_{B} \cdot\Phi(\bPsi_1(\bx_1))~ [\mathrm{Lemma}~ \ref{Frob-up}]\\
                      &=\pi\cdot [\Upsilon]^{d\bx_0}_{B}\cdot 1/\pi\cdot \Fc(d\bx_0)~[\mathrm{by}~ \eqref{eq-1}]\\
                      &=\Fc(\Upsilon(B))
\end{align*}
which completes the proof.\qed
 
 As an application we have a very interesting result describing the largest $\Fc$-stable subspace of $H^0(A,\Omega_A)_K$ in terms of order $1$ delta characters.
 \begin{corollary}\label{cor-int} Let $A$ be a $\pi$-formal abelian scheme over $R.$ Then the following map is a $K$-linear isomorphism
$$\Upsilon:\bX_1(A)_K\xrightarrow{\simeq} H^0(A,\Omega_A)_K\cap\Fc (H^0(A,\Omega_A)_K).$$
\end{corollary}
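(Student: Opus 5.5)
The plan is to transport Theorem \ref{order-1} across the injective map $\Phi$, using the Frobenius compatibility from Theorem \ref{main-thm}. For $A$ an abelian scheme we have $\bX_0(A)=\Hom(A,\hG)=\{0\}$, so Theorem \ref{order-1} reads
$$\bX_1(A)_K = \bXp(A)_K\cap \mff^*\bXp(A)_K \quad\text{inside } \Hd(A)_K.$$
Here we regard $\bX_1(A)_K$ as a subspace of $\bXp(A)_K$, hence of $\Hd(A)_K$, via $i^*$. We then apply $\Phi:\Hd(A)_K\to\Hdr(A)_K$, which is injective by \cite{GPS}. Injectivity gives
$$\Phi(U\cap W)=\Phi(U)\cap\Phi(W)$$
for any subspaces $U,W\subset\Hd(A)_K$, so
$$\Phi(\bX_1(A)_K)=\Phi(\bXp(A)_K)\cap \Phi(\mff^*\bXp(A)_K).$$

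Next we identify both terms on the right. By diagram \eqref{delta-hodge}, $\Phi$ restricted to $\bXp(A)_K$ equals $\Upsilon$. Theorem \ref{GPS-1} says $\Upsilon:\bXp(A)_K\to H^0(A,\Omega_A)_K$ is an isomorphism, so the first term is $H^0(A,\Omega_A)_K$. By Theorem \ref{Frob-main-thm}, $\Phi\circ\mff^*=\Fc\circ\Upsilon$ on $\bXp(A)_K$, so
$$\Phi(\mff^*\bXp(A)_K)=\Fc(\Upsilon(\bXp(A)_K))=\Fc(H^0(A,\Omega_A)_K).$$
Combining these, $\Phi$, which on $\bX_1(A)_K$ is just $\Upsilon$, maps $\bX_1(A)_K$ onto $H^0(A,\Omega_A)_K\cap\Fc(H^0(A,\Omega_A)_K)$. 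It is injective, being a restriction of the isomorphism $\Upsilon$, and this gives the claimed isomorphism.

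The step that needs the most care is the bookkeeping of identifications. We must check that the copy of $\bX_1(A)_K$ in Theorem \ref{order-1}, namely $i^*\bX_1(A)_K/i^*\phi^*\bX_0(A)_K$ sitting in $\bH_1(A)_K\subset\Hd(A)_K$, is the same as its image under the inclusion $\bX_1(A)_K\hookrightarrow\bXp(A)_K$ to which $\Upsilon$ is applied. This is exactly the content of Lemma \ref{com-2} with $n=1$, since $i^*p_1\bX_1=i^*\bXp\cap\bH_1$. We must also check that $\Phi$ restricted to $\bXp$ agrees with $\Upsilon$, which is the commutativity of \eqref{diag-crys-limit}. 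Once these identifications are checked, the argument is formal.
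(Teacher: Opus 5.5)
Your proposal is correct and is essentially the paper's proof, which simply combines Theorem~\ref{order-1}, Theorem~\ref{GPS-1} and Theorem~\ref{Frob-main-thm}. You make explicit two points the paper leaves implicit: that $\bX_0(A)=0$, and that the injectivity of $\Phi$ (from diagram~\eqref{delta-hodge}) is what transports the intersection $\bXp(A)_K\cap\mff^*\bXp(A)_K$ onto $H^0(A,\Omega_A)_K\cap\Fc(H^0(A,\Omega_A)_K)$.
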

\begin{proof} The proof directly follows by combining Theorem \ref{order-1}, Theorem \ref{GPS-1} and Theorem \ref{Frob-main-thm}.
\end{proof}

 \section{Comparison results between $\Hd(A)$ and $\Hcr(A)$} \label{S5}
 As a corollary of Theorem \ref{delta-crys-main} and Theorem \ref{main-thm}, we obtain an explicit characterization of the delta isocrystal as a sub-object of the crystalline cohomology.
 \begin{corollary}\label{main-cor} Let $A$ be an abelian scheme over $R.$ Then $\mathrm{\Iso}(\bH_\d(A)_{K})$ is the smallest sub-object of $\mathrm{\Iso}(\Hcr(A)_{K})$ containing $H^0(A,\Omega_A)$ in the category of filtered $F$-isocrystals.
\end{corollary}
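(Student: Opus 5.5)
The plan is to realise $\mathrm{\Iso}(\bH_\d(A)_{K})$ as a sub-object of $\mathrm{\Iso}(\Hcr(A)_{K})$ through $\Phi$, and then to prove minimality using the explicit generation statement of Theorem \ref{delta-crys-main}. Throughout, $\Hdr(A)_K$ is identified with $\Hcr(A_0)_K$ via the de Rham--crystalline comparison.

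\textbf{Step 1: $\Phi$ is a sub-object.} By Theorem $1.1$ in \cite{GPS}, $\Phi:\Hd(A)_K\to\Hdr(A)_K$ is injective. By Theorem \ref{main-thm}, it intertwines $\mff^*$ with $\Fc$, so it is a monomorphism of $F$-isocrystals. For the filtrations, diagram \eqref{delta-hodge} gives $\Phi(\bXp(A)_K)=\Upsilon(\bXp(A)_K)$. Theorem \ref{GPS-1} says $\Upsilon$ is an isomorphism onto $H^0(A,\Omega_A)_K$. Hence
$$\Phi(\bXp(A)_K)=H^0(A,\Omega_A)_K=\Phi(\Hd(A)_K)\cap H^0(A,\Omega_A)_K,$$
so the filtration induced on $\Phi(\Hd(A)_K)$ is exactly $\Hd(A)_K\supset\bXp(A)_K\supset 0$. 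Thus $\Phi$ is strictly compatible with filtrations, and $\mathrm{\Iso}(\bH_\d(A)_{K})$ is a sub-object containing $H^0(A,\Omega_A)_K$.

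\textbf{Step 2: minimality.} Let $M\subset\Hcr(A_0)_K$ be any sub-object containing $H^0(A,\Omega_A)_K$, carrying the induced filtration. Being an $F$-sub-isocrystal, $M$ is $\Fc$-stable. We show by induction that $\Phi(\cF_i)\subset M$ for all $i$:
\begin{itemize}
\item For $i=0$, $\Phi(\cF_0)=H^0(A,\Omega_A)_K\subset M$.
\item If $\Phi(\cF_i)\subset M$, then Theorem \ref{main-thm} gives
$$\Phi(\cF_{i+1})=\Phi(\bXp(A)_K)+\Fc\,\Phi(\cF_i)\subset M.$$
\end{itemize}
By Theorem \ref{delta-crys-main}, $\Hd(A)_K=\cF_{m_u-1}$, so $\Phi(\Hd(A)_K)\subset M$. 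Both filtrations are induced from $\Hcr(A)_K^\bullet$, so this inclusion is one of filtered sub-objects.

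\textbf{Main point to check.} The one subtle point is what "sub-object" means for filtered $F$-isocrystals, which do not form an abelian category. I would fix the convention that a sub-object is an $\Fc$-stable subspace with the induced filtration; this is what makes Step 1 a strict embedding. The needed equality $\Phi(\Hd(A)_K)\cap H^0(A,\Omega_A)_K=\Phi(\bXp(A)_K)$ holds automatically once $\Phi(\bXp(A)_K)$ is the whole of $H^0(A,\Omega_A)_K$, which is exactly the surjectivity of $\Upsilon_K$ from Theorem \ref{GPS-1}.
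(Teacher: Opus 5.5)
Your proposal is correct and is essentially the paper's argument. The paper simply records the corollary as a consequence of Theorem \ref{delta-crys-main} and Theorem \ref{main-thm}, together with the injectivity of $\Phi$ and the bijectivity of $\Upsilon$ on $\bXp(A)_K$. Your Step 1 (strict embedding) and the induction $\Phi(\cF_{i+1})=H^0(A,\Omega_A)_K+\Fc\,\Phi(\cF_i)$ in Step 2 are exactly the details it leaves implicit. Note that Step 2 needs the Frobenius compatibility on every $\cF_i$, i.e.\ the full Theorem \ref{main-thm}, and not only its restriction to $\bXp(A)_K$ given by Theorem \ref{Frob-main-thm}.
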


 As an application, we deduce the comparison isomorphisms for abelian schemes as follows:
 
 \begin{theorem}\label{ab-sur-thm}Let $A$ be a $\pi$-formal abelian scheme over $R$. 
\begin{itemize} 
\item [(1)] If $\bX_1(A)=\{0\}$ then $$\mathrm{\Iso}(\bH_\d(A)_{K})\simeq \mathrm{\Iso}(\Hcr(A)_{K})$$ in the category of filtered $F$-isocrystals. \vspace{0.2cm}
\item[(2)] If $A$ is CL then $\dim_K \bX_1(A)_K=g$ and $$\mathrm{\Iso}(\bH_\d(A)_{K})\simeq \mathrm{\Iso}(H^0(A,\Omega_A)_{K})$$
where $\mathrm{\Iso}(H^0(A,\Omega_A)_{K})$ is the sub-object of $\mathrm{\Iso}(\Hcr(A)_{K}).$
\end{itemize}
\end{theorem}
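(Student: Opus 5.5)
For part (1), I will combine Corollary \ref{sum} with Theorem \ref{main-thm}. If $\bX_1(A)=\{0\}$, then $\bX_1(A)_K=\{0\}$. Corollary \ref{sum} gives $\Hd(A)_K=\bXp(A)_K+\mff^*\bXp(A)_K$. Its proof shows this sum is direct, with $K$-dimension $2g$.

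The map $\Phi:\Hd(A)_K\to\Hdr(A)_K$ is injective by Theorem $1.1$ in \cite{GPS}. Both sides have dimension $2g$, so $\Phi$ is a $K$-linear isomorphism onto $\Hdr(A)_K\simeq\Hcr(A_0)_K$.

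Three compatibilities then finish part (1). Theorem \ref{main-thm} gives $\Phi\circ\mff^*=\Fc\circ\Phi$. Diagram \eqref{delta-hodge} together with Theorem \ref{GPS-1} gives $\Phi(\bXp(A)_K)=\Upsilon(\bXp(A)_K)=H^0(A,\Omega_A)_K$. Hence $\Phi$ is a bijection respecting the Frobenius operators and carrying the filtration $\Hd(A)_K\supset\bXp(A)_K\supset\{0\}$ exactly onto the Hodge filtration. An inverse of a bijective filtered, Frobenius-equivariant linear map is again such a map, so this is an isomorphism of filtered $F$-isocrystals. Alternatively, part (1) follows directly from Corollary \ref{main-cor}: the smallest sub-object has dimension $2g$, so it is everything.

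For part (2), assume $A$ is CL. Then the jet sequence $0\to N^1\to J^1A\to A\to 0$ splits. Corollary \ref{semiab-CL}(1) then gives $\bXp(A)_K=\bX_1(A)_K/\phi^*\bX_0(A)_K$. Since $A$ is an abelian scheme, $\bX_0(A)=\Hom(A,\hG)=0$, because $A$ is proper with connected fibers and has no nonzero additive characters. Hence $\dim_K\bX_1(A)_K=\dim_K\bXp(A)_K=g$ by Theorem \ref{yeah}.

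Corollary \ref{cor-int} then shows that $H^0(A,\Omega_A)_K\cap\Fc(H^0(A,\Omega_A)_K)$ has dimension $g$. So $\Fc$ preserves $H^0(A,\Omega_A)_K$; this is consistent with the CL remark in the introduction. It follows that $\mathrm{\Iso}(H^0(A,\Omega_A)_K)$ is a sub-object containing $H^0(A,\Omega_A)_K$. It is clearly the smallest such, since any sub-object must contain $H^0$ as a subspace.

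By Corollary \ref{main-cor}, $\Phi$ identifies $\mathrm{\Iso}(\Hd(A)_K)$ with this smallest sub-object. Concretely, Theorem \ref{order-1} gives $\mff^*\bXp(A)_K=\bXp(A)_K$, so $\Hd(A)_K=\cF_0=\bXp(A)_K$. Then $\Phi=\Upsilon$ is a Frobenius-equivariant isomorphism onto $H^0(A,\Omega_A)_K$, and both filtrations have the form $V\supset V\supset 0$.

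The main point needing care is the identification of $\bX_1(A)/\phi^*\bX_0(A)$ with $\bX_1(A)$, that is, the vanishing of $\bX_0(A)$. I would also make sure the CL hypothesis as defined (existence of $\beta$ lifting Frobenius) matches the splitting criterion used in Corollary \ref{semiab-CL}; I would cite Buium's Proposition $3.2$ in \cite{bui95} for this.
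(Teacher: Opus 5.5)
Your proposal is correct and follows essentially the paper's route: the dimension count from Corollary \ref{sum} in part (1), and from Corollary \ref{semiab-CL}(1) together with $\bX_0(A)=0$ and Theorem \ref{delta-crys-main} in part (2), combined with the Frobenius compatibility of Theorem \ref{main-thm} and Corollary \ref{main-cor}. You are somewhat more explicit than the paper: you check bijectivity of $\Phi$ and exact matching of the filtrations directly, and you skip its unnecessary detour through $m_u=2$ in part (1); note also that your step from $\mff^*\bXp(A)_K=\bXp(A)_K$ to $\Hd(A)_K=\cF_0$ silently uses Theorem \ref{delta-crys-main}.
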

\begin{proof} (1) Since $\bX_1(A)=\{0\},$ by part $(2)$ of Corollary \ref{semiab-CL},  we have $$\bXp(A)_K=\bX_2(A)_K.$$ Hence $m_u=2.$ Then by Corollary \ref{sum}, we have $$\Hd(A)_K=i^*\bXp(A)_K\oplus \mff^*i^*\bXp(A)_K.$$ Therefore $\dim_K \Hd(A)_K=2g,$ and we are done by Corollary \ref{main-cor}.\\

\noindent (2) If $A$ is CL then by part (1) of Corollary \ref{semiab-CL}, we have $\bXp(A)_K=\bX_1(A)_K.$
Hence $m_u=1$ and by Theorem \ref{delta-crys-main}, we have $$\Hd(A)_K=\cF_0=\bXp(A)_K.$$ Therefore $\dim_K \Hd(A)_K=g,$ and we are done by Corollary \ref{main-cor}.
\end{proof}
We would like to remark that the above theorem, in particular, recovers the earlier comparison results between the delta isocrystal and the first crystalline cohomology for elliptic curves in \cite{GPS, PS-2} as a special case. Indeed, by section $9$ in \cite{BS_b} the following holds:

 For an elliptic curve $A$ over $R,$ the $R$-module $\bX_1(A)=\{0\}$ if and only if $A$ is non-CL if and only if $\dim_K \Hd(A)_K=2.$
Thus Theorem \ref{ab-sur-thm}  implies the following theorem:
\begin{theorem}[Theorem 1.3 in \cite{GPS}]\label{Iso-crys-11}
Let $A$ be an elliptic curve over $R$. 

(1) If $A$ is a non-CL elliptic curve then
$$
\mathrm{\Iso} (\Hd(A)_{K}) \simeq \mathrm{\Iso} (\Hcr(A)_{K})
$$
in the category of filtered $F$-isocrystals.

(2) If $A$ is CL then $$\mathrm{\Iso} (\Hd(A)_{K}) \simeq 
\mathrm{\Iso} (H^0(A, \Omega_A)_{K})$$ 
in the category of filtered $F$-isocrystals 
where $\mathrm{\Iso} (H^0(A,\Omega_A)_{K})$
is the one dimensional sub-object of $\mathrm{\Iso}(\Hcr(A)_{K})$.
\end{theorem}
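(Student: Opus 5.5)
The statement to prove is Theorem \ref{Iso-crys-11}, the elliptic curve case ($g=1$). The plan is to obtain it from Theorem \ref{ab-sur-thm}. The only extra input is the dichotomy recalled just before the statement, from section $9$ of \cite{BS_b}: for an elliptic curve $A$ over $R$, $\bX_1(A)=\{0\}$ if and only if $A$ is non-CL, if and only if $\dim_K\Hd(A)_K=2$.

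For part (1), let $A$ be non-CL. By the cited dichotomy, $\bX_1(A)=\{0\}$, so part (1) of Theorem \ref{ab-sur-thm} applies directly and gives $\mathrm{\Iso}(\Hd(A)_K)\simeq\mathrm{\Iso}(\Hcr(A)_K)$.

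As a sanity check, the isomorphism can be seen concretely. By Theorem \ref{main-thm}, the injective map $\Phi$ is $F$-equivariant. It carries the filtration piece $\bXp(A)_K$ isomorphically onto $H^0(A,\Omega_A)_K$, by Theorem \ref{GPS-1}. By Corollary \ref{sum}, $\dim_K\Hd(A)_K=2=\dim_K\Hcr(A_0)_K$. So $\Phi$ is a bijection of $K$-vector spaces that is compatible with Frobenius and with filtrations in both directions. Here filtration-strictness holds because the two filtrations have exactly one nontrivial step, and $\Phi$ matches those steps.

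For part (2), let $A$ be CL. Part (2) of Theorem \ref{ab-sur-thm} gives $\mathrm{\Iso}(\Hd(A)_K)\simeq\mathrm{\Iso}(H^0(A,\Omega_A)_K)$. Since $g=1$, the target is the one-dimensional sub-object, which is $\Fc$-stable because $A$ is CL. This is exactly the statement.

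I expect no real obstacle beyond checking hypotheses. Theorem \ref{ab-sur-thm} is stated for $\pi$-formal abelian schemes, and an elliptic curve over $R$ is a $\pi$-formal abelian scheme after $\pi$-adic completion, with the delta objects defined on the completion. The one step that needs care is the equivalence non-CL $\Leftrightarrow \bX_1(A)=\{0\}$, which is imported from \cite{BS_b}. If I wanted to reprove it internally, I would first use Corollary \ref{semiab-CL}(1): $A$ is CL if and only if $\bX_1(A)_K$ surjects onto the one-dimensional space $\bXp(A)_K$. Then, with $g=1$, I would need $\bX_1(A)_K$ to be either $0$ or the whole of $\bXp(A)_K$. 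This follows from $\dim\bXp(A)_K=1$ and the injection $\bX_1(A)_K\hookrightarrow\bXp(A)_K$, which holds because $\bX_0(A)=\Hom(A,\hG)=0$ for an abelian scheme.
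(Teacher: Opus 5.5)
Your proposal is correct and takes the same route as the paper: the paper also imports the dichotomy from Section 9 of \cite{BS_b} ($\bX_1(A)=\{0\}$ if and only if $A$ is non-CL) and then reads off both parts directly from Theorem \ref{ab-sur-thm}. Your extra remarks are valid but not needed; these are the direct check that $\Phi$ is a filtered, Frobenius-equivariant bijection, and the internal derivation of the dichotomy from Corollary \ref{semiab-CL}(1) using $\bX_0(A)=0$ and $\dim_K\bXp(A)_K=1$.
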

\begin{remark}\label{delta-rk}  Given any two positive integers $r$ and $g$ such that $g\leq r\leq 2g$ there is an abelian scheme $A$ with $\dim A=g$ and $\rk \Hd(A)=r.$

 Indeed, let $A=A_1^{r-g}\times A_2^{2g-r}$, where $A_1$  and $A_2$ are two elliptic curves over $R$ such that $A_1$ is non-CL and $A_2$ is CL. Then note that $$\dim A=(r-g)+(2g-r)=g.$$ On the other hand, by Theorem \ref{Iso-crys-11} we have $$\rk \Hd(A)=2(r-g)+(2g-r)=r.$$
\end{remark}
Moreover, combining Corollary \ref{main-cor} with Buium's Theorem B in \cite{bui95}, we can directly derive the following comparison isomorphisms for ordinary abelian schemes in terms of their Serre--Tate parameters:
\begin{theorem} Assume $\pi=p\neq2.$ Let $A$ be a $\pi$-formal abelian scheme over $R$ having ordinary closed fiber $A_0$ over $k.$ Let $q_{ij}(A)\in 1+pR, 1\leq i,j\leq g$ be the Serre--Tate parameters. Then
\begin{itemize}
\item[(1)] Assume $\det((q_{ij}(A)-1)/p)\in R^{\times}.$ Then we have 
$$\mathrm{\Iso}(\bH_\d(A)_{K})\simeq \mathrm{\Iso}(\Hcr(A)_{K})$$ in the category of filtered $F$-isocrystals.\vspace{0.2cm}
\item[(2)] Assume $q_{ij}(A)=1$ for all $i,j$, then $A$ is CL and we have 
$$\mathrm{\Iso}(\bH_\d(A)_{K})\simeq \mathrm{\Iso}(H^0(A,\Omega_A)_{K})$$
\end{itemize}
\end{theorem}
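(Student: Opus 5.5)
The plan is to reduce both parts to Theorem \ref{ab-sur-thm}, using Buium's Theorem B in \cite{bui95} to supply the hypotheses. The link between the Serre--Tate parameters and delta characters is that Buium computes $\bX_1(A)$ for ordinary $A$ with $\pi=p\neq 2$. He compares the Kodaira--Spencer class of the jet extension $0\to N^1\to J^1A\to A\to 0$ with the matrix $((q_{ij}(A)-1)/p)$ of Serre--Tate parameters. His conclusion is that the rank of $\bX_1(A)$ is governed by the kernel of this matrix modulo the appropriate torsion.

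For part (1), I would first invoke Buium's Theorem B (see also page 335 of \cite{bui95}). It says that if $\det((q_{ij}(A)-1)/p)\in R^{\times}$, then no nonzero additive character of $J^1A$ exists, that is, $\bX_1(A)=\{0\}$. The point to check is that Buium's $\delta$-characters of order one agree with $\Hom_R(J^1A,\hG)$ as defined here. This holds because $J^1A$ is Buium's first jet space, as recalled in Section \ref{S1}. With $\bX_1(A)=\{0\}$ established, part (1) of Theorem \ref{ab-sur-thm} gives $\mathrm{\Iso}(\bH_\d(A)_{K})\simeq \mathrm{\Iso}(\Hcr(A)_{K})$ directly. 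Unwinding that theorem, the argument is as follows. By Corollary \ref{sum} and Lemma \ref{frob-bij-2}, $\dim_K\Hd(A)_K=2g$. By Theorem \ref{main-thm}, $\Phi$ is an injective map of filtered $F$-isocrystals into a $2g$-dimensional object, so it is an isomorphism. Its filtration matches because $\Upsilon$ is an isomorphism by Theorem \ref{GPS-1}.

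For part (2), assume $q_{ij}(A)=1$ for all $i,j$. By Serre--Tate theory, $A$ is then the canonical lift of $A_0$. Hence the $q$-power Frobenius of $A_0$ lifts to an endomorphism of $A$, since endomorphisms of $A_0$ lift to the canonical lift. Thus $A$ is CL in the sense defined in the introduction. Part (2) of Theorem \ref{ab-sur-thm} then gives $\dim_K\bX_1(A)_K=g$ and $\mathrm{\Iso}(\bH_\d(A)_{K})\simeq \mathrm{\Iso}(H^0(A,\Omega_A)_{K})$. Concretely, Corollary \ref{semiab-CL}(1) gives $m_u=1$, so $\Hd(A)_K=\bXp(A)_K$ by Theorem \ref{delta-crys-main}. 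By Corollary \ref{main-cor}, this maps isomorphically onto the $\Fc$-stable subspace $H^0(A,\Omega_A)_K$.

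The main obstacle is the input for part (1): verifying that Buium's Theorem B, stated in his normalization, really gives $\bX_1(A)=\{0\}$ under exactly the unit-determinant hypothesis. This requires matching his Kodaira--Spencer/Serre--Tate computation, and his hypotheses ($p\neq 2$ and $R=W(\bF_q)$ with $\phi$ as fixed here), with the present setup. The lifting of Frobenius to the canonical lift in part (2) is standard.
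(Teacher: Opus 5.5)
Your proposal is correct and follows the paper's route: it reduces both parts to Theorem \ref{ab-sur-thm} (that is, to Corollary \ref{main-cor}), with Buium's computation in \cite{bui95} supplying $\bX_1(A)=\{0\}$, hence $\bXp(A)_K=\bX_2(A)_K$, in case (1). The only variation is in part (2), where you get CL directly from Serre--Tate theory (the $q$-power Frobenius lifts to the canonical lift) instead of quoting Theorem B for $\bXp(A)_K=\bX_1(A)_K$; this is equally valid, and it proves the CL claim in the statement directly rather than recovering it through Corollary \ref{semiab-CL}(1).
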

\begin{proof} Theorem B in \cite{bui95} implies that $\bXp(A)_K=\bX_2(A)_K$ in the first case, and $\bXp(A)_K=\bX_1(A)_K$ in the second case. Therefore we are done by Corollary \ref{main-cor} as in the proof of Theorem \ref{ab-sur-thm}.
\end{proof}

\footnotesize{

\end{document}